\newcommand{\cK}{\mathcal{K}}     %%%compact operators
\newcommand{\cL}{\mathcal{L}}
\newcommand{\cN}{{\mathcal{N}}}  %%two-sided multiplier algebra
\newcommand{\cS}{{\mathcal{S}}}
\newcommand{\C}{\mathbb{C}}
\newcommand{\N}{\mathbb{N}}
\newcommand{\Ann}{\mathrm{Ann}}  %% Annihilator
\newcommand{\cst}{C$^*$}
\newtheorem{thm}{Theorem}[section]
\newtheorem{cor}[thm]{Corollary}
\newtheorem{lemma}[thm]{Lemma}
\newtheorem{prop}[thm]{Proposition}
\theoremstyle{definition}
\newtheorem{definition}[thm]{Definition}
\newtheorem{ex}[thm]{Example}
\newtheorem{rem}[thm]{Remark}
\newcommand{\nai}[2]{\langle #1,#2\rangle}
\newcommand{\eqa}[1]{
\begin{align*}
#1
\end{align*}}
\newcommand{\mattwo}[4]{\begin{pmatrix}#1 & #2\\ #3 & #4\end{pmatrix}}
\numberwithin{equation}{section}
\begin{document}

\title{Non-commutativity of the central sequence algebra for separable non-type {\rm{I}} \cst-algebras}

\author{Hiroshi  Ando}
\address{Department of Mathematics and Informatics,\\
Chiba University\\
1-33 Yayoi-cho, Inage, Chiba,\\
263-8522 Japan}
\email{hiroando@math.s.chiba-u.ac.jp}

\author{Eberhard Kirchberg}
\address{Institut f{\"u}r Mathematik,
Humboldt Universit{\"a}t zu Berlin, 
Unter den Linden 6,\\
D--10099 Berlin, Germany}
\email{kirchbrg@mathematik.hu-berlin.de}

\date{Version: \today}
\subjclass[2010]{Primary: 46L05, Secondary: 03C20}
\keywords{Central sequences; C$^*$-algebras}

\begin{abstract}
We show that if $A$ is a (not necessarily unital) separable, simple and non-type I C$^{\ast}$- algebra, then for every properly infinite hyperfinite von Neumann algebra $M$ with separable predual, its Ocneanu central sequence algebra $M'\cap M^{\omega}$ arises as a sub-quotient of the central sequence algebra $F(A)$ defined by the second named author. In particular, this answers affirmatively the question of the second named author in \cite{Kir.AbelProc}: the central sequence C$^{\ast}$-algebra of the reduced free group C$^{\ast}$-algebra $C_{\rm{red}}^*(\mathbb{F}_2)$ is non-commutative. 
\end{abstract}

\maketitle

%\tableofcontents
\section{Introduction and main Results}
\label{sec:intro}

Let $A$ be a \cst-algebra, and let $\omega$ be a free ultrafilter on $\mathbb{N}$. The central sequence algebra of $A$ is the relative commutant $A'\cap A_{\omega}$ of $A$ inside the norm-ultrapower $A_{\omega}$.  Recent developments in the classification of C$^{\ast}$-algebras show the importance of the analysis on $A'\cap A_{\omega}$. On the other hand, if $A$ is non-unital, $A'\cap A_{\omega}$ is often too large even for type I C$^{\ast}$-algebras such as the compact operator algebra $\mathcal{K}$. 
In \cite[sec.1]{Kir.AbelProc}
the second named author introduced the 
invariant $F(A):= (A'\cap  A_\omega)/  \mathrm{Ann}(A, A_\omega)$ which has better behavior for non-unital C$^{\ast}$-algebras than the usual central sequence algebras. In fact, it is shown that (for a fixed $\omega$) $F(A)$ is a stable invariant for $\sigma$-unital C$^{\ast}$-algebras, while the central sequence algebras are clearly not. 
In any case, the central sequences in C$^{\ast}$-algebras have rather different properties from   
those in von Neumann algebras. 

It is now  understood that properties of the invariant $F(A)$, and 
its
continuous analogs, are important for the study of separable amenable
\cst-algebras.

For example it is known that $A\otimes  \cK \cong \cK$
if $F(A)\cong \C$ and  $A$ is separable.
A separable C$^*$-algebra $A$ is nuclear, simple and purely infinite
if and only if $F(A)$ is simple and $F(A)\not\cong \C$, in which case $F(A)$ is also purely infinite.

These and other properties of $F(A)$ can be found in \cite{Kir.AbelProc}
and \cite{KirRor.Crelle2013}.

It is in particular  interesting to know when $F(A)$ has no character (see the recent work of the second named author and R\o rdam \cite{KirchbergRordam14}).  
A small step towards this direction is
the main result of this paper stated as the following theorem (in this paper, we do not assume that C$^*$-algebras are unital unless stated otherwise explicitly):
\begin{thm}\label{thm:main}
If $A$ is a separable {\rm{C}}$^*$-algebra that is not of type {\rm{I}},
then $F(A)$ is not commutative. 
\end{thm}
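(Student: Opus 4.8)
The plan is to realise some non-commutative C$^*$-algebra as a sub-quotient of $F(A)$; then $F(A)$, possessing a non-commutative sub-quotient, cannot be commutative. The cleanest way is to build a $*$-homomorphism $\Phi\colon M_2\to A'\cap A_\omega$ whose image in $F(A)=(A'\cap A_\omega)/\mathrm{Ann}(A,A_\omega)$ is nonzero; since $M_2$ is simple, that image is then a faithful copy of $M_2$. Concretely one must produce, for each $\varepsilon>0$ and each finite $F\subseteq A$, a completely positive contractive (c.p.c.) map $\varphi\colon M_2\to A$ that is $\varepsilon$-multiplicative, satisfies $\|[\varphi(x),a]\|<\varepsilon$ for $a\in F$ and $x$ in the unit ball of $M_2$, and is \emph{non-annihilating}: $\|\varphi(1)a_0\|$ stays bounded away from $0$ for some fixed $a_0\in A$. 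One may also be more ambitious and aim to realise $\mathcal R'\cap\mathcal R^\omega$, $\mathcal R$ the hyperfinite ${\rm II}_1$ factor, which is non-commutative by McDuff's theorem $\mathcal R\cong\mathcal R\,\overline{\otimes}\,\mathcal R$; this is what the full strength of the paper does.

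The hypothesis that $A$ is not of type ${\rm I}$ enters through Glimm's theorem: since $A$ is separable and not of type ${\rm I}$, there are a sub-C$^*$-algebra $C\subseteq A$ and a closed two-sided ideal $J\triangleleft C$ with $C/J\cong U:=M_{2^\infty}$, the CAR algebra. Writing $U\cong\bigotimes_{k\ge1}M_2$, the unital inclusions $\psi_k\colon M_2\to U$ of the $k$-th tensor leg are pairwise non-commuting and approximately central, $\|[\psi_k(x),u]\|\to0$ for every $u\in U$. As $U$ is nuclear the extension $0\to J\to C\to U\to0$ admits a c.p.c.\ splitting $s\colon U\to C\subseteq A$ (Choi--Effros), so each $s\circ\psi_k\colon M_2\to A$ is c.p.c., asymptotically multiplicative, and asymptotically commutes with $C$ \emph{modulo $J$}.

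Two gaps remain, and bridging them — especially the second — is the heart of the matter. First, the "modulo $J$": compressing $s\circ\psi_k$ by $(1-h_\lambda)^{1/2}$ for a quasi-central approximate unit $(h_\lambda)$ of $J$ in $C$ and reindexing diagonally produces maps that honestly asymptotically commute with $C$ while still asymptotically satisfying the $M_2$-relations, using that those relations are stable under small perturbation. Second, and this is the main obstacle: $C$ is merely a subalgebra of $A$, so nothing so far controls commutation with $A\setminus C$. The resolution passes through von Neumann algebras: represent $A$ non-degenerately (possibly with kernel) so that its weak closure $M$ is not of type ${\rm I}$; every non-type-${\rm I}$ von Neumann algebra with separable predual contains a copy of $\mathcal R$, and by Kaplansky density the matricial tower of that $\mathcal R$ is approximated in the strong-$*$ topology by elements of $A$. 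The $\sigma$-ideal property of $\mathrm{Ann}(A,A_\omega)$ inside $A'\cap A_\omega$ then permits reindexing these approximations into honest central sequences of $A$. Throughout one replaces $A$ by $A\otimes\mathcal K$ — harmless since $F(A)\cong F(A\otimes\mathcal K)$ ($F$ being a stable invariant) — and uses Brown's stable isomorphism theorem to upgrade $C$ to a full hereditary subalgebra of $A\otimes\mathcal K$, which is what makes the reindexing possible.

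The output of this procedure is a $*$-homomorphism from $M'\cap M^\omega$ into a sub-quotient of $F(A)$, for an arbitrary properly infinite hyperfinite von Neumann algebra $M$ with separable predual. Taking $M=\mathcal R\,\overline{\otimes}\,B(\ell^2)$, its central sequence algebra $M'\cap M^\omega$ contains $\mathcal R'\cap\mathcal R^\omega$ and is therefore non-commutative; hence $F(A)$ has a non-commutative sub-quotient and is itself non-commutative. The single hardest point is the transport step, i.e.\ the second gap above: the naive lift or splitting yields commutation only relative to the Glimm subalgebra $C$ and only modulo its ideal $J$, and it is precisely the von-Neumann-algebraic detour — combined with stabilisation and the $\sigma$-ideal reindexing — that closes the gap to genuine central sequences of $A$.
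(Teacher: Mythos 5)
Your high-level strategy --- realise a non-commutative W$^*$-central sequence algebra as a sub-quotient of $F(A)$, with Glimm-type results supplying the link between $A$ and hyperfinite von Neumann algebras --- is the paper's strategy, and you correctly isolate the hard point: approximate centrality obtained inside a subquotient $C/J$ or in a weak closure is only centrality modulo $J$, respectively in the $\|\cdot\|_\rho^\sharp$-seminorm, and must somehow be converted into norm-centrality against all of $A$. But your proposal does not actually close that gap, and the mechanism you name for closing it is wrong. You invoke ``the $\sigma$-ideal property of $\mathrm{Ann}(A,A_\omega)$ inside $A'\cap A_\omega$''; no such property is established (or needed), and even granted it would not produce $M'\cap M^\omega$, since $F(A)=(A'\cap A_\omega)/\mathrm{Ann}(A,A_\omega)$ is not the right quotient. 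The paper's actual device is the C$^*$-to-W$^*$ ultraproduct: for a normal faithful state $\rho$ on $M=A''$ one forms the left ideal $L_\rho$, the hereditary algebra $D_\rho=L_\rho\cap L_\rho^*$ and its normalizer $\mathcal{N}(D_\rho)\subseteq\ell_\infty(A)$, proves that $\pi_\omega(D_\rho)$ is a $\sigma$-ideal of $\pi_\omega(\mathcal{N}(D_\rho))$ (Proposition \ref{prop: Obs (7)}, via a quasi-central approximate unit and the $\varepsilon$-test), and then applies the exactness of relative-commutant sequences for $\sigma$-ideals (Theorem \ref{thm: pi_I is surjective}) to conclude that the genuinely norm-central sequences in $\pi_\omega(\Delta(A))'\cap\pi_\omega(\mathcal{N}(D_\rho))$ surject onto $\pi_{D_\rho}(\Delta(A))'\cap\mathcal{N}(D_\rho)/D_\rho\cong M'\cap M^\omega$; the inclusion $\mathrm{Ann}(A,A_\omega)\subseteq\pi_\omega(D_\rho)$ then lets this surjection descend to a subalgebra of $F(A)$. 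That correction-of-lifts argument is the missing idea, and nothing in your sketch substitutes for it: Kaplansky density alone gives strong-$*$ approximants, not norm-central sequences, and ``reindexing'' does not bridge the two seminorms.

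Two further points. First, Glimm enters the paper not through a CAR subquotient $C/J\cong M_{2^\infty}$ of $A$ with a Choi--Effros splitting (your first two paragraphs, which you yourself concede control nothing outside $C$), but through Mar\'echal's strengthening plus Elliott--Woods: for \emph{every} properly infinite hyperfinite $M$ with separable predual there is a representation $d\colon A\to M$ with $d(A)''=M$, and $F(d(A))$ is a quotient of $F(A)$ because $\ker(d)_\omega$ is a $\sigma$-ideal of $A_\omega$ (Corollary \ref{cor: F(A/J) is a quotient of F(A)}); your stabilisation by $\mathcal{K}$ and appeal to Brown's theorem are not needed and do not advance the argument. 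Second, at the end you should take $M$ to be a Powers factor $R_\lambda$, whose central sequence algebra is known to be a type III$_\lambda$ factor; for $M=\mathcal{R}\,\overline{\otimes}\,B(\ell^2)$ the claim that $M'\cap M^\omega$ contains $\mathcal{R}'\cap\mathcal{R}^\omega$ requires an argument about the choice of state that you do not give. As written, the proposal is an accurate map of where the difficulty lies, but the proof of the difficult step is absent.
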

In particular,  the central sequence algebra of the 
reduced group C*-algebra $C^*_{{\rm red}}(\mathbb{F}_n)$
of the free group $\mathbb{F}_n$ ($n=2,3,\ldots $) is not commutative
despite the W$^*$-central sequence algebra 
of the group von Neumann algebra $L(\mathbb{F}_n)\subset \cL(\ell_2(\mathbb{F}_n))$ being trivial. This is an affirmative answer to a question by the second named author \cite[Question 2.16]{Kir.AbelProc}
 (by Akemann-Pedersen Theorem \cite{AkemannPedersen}, it was known before that the central sequence algebra is not isomorphic to $\mathbb{C}$).   
The above result is inspired by (and is in fact a generalization of) the following result by the second named author and R\o rdam \cite[Theorem 3.3]{KirRor.Crelle2013}, which in turn is a generalization of the work of Sato \cite[Lemma 2.1]{Sato11}:
\begin{thm}[Kirchberg-R\o rdam,\ Sato]\label{thm: SKR theorem} Let $A$ be a separable unital ${\rm C}^\ast$-algebra, with a faithful tracial state $\tau$. Let $N$ be the weak closure of $A$ under the GNS representation $\pi_{\tau}$ of $A$ with respect to $\tau$, and let $\omega$ be a free ultrafilter on $\mathbb{N}$. Then the natural *-homomorphisms
$$A_{\omega}\to N^{\omega},\ \ \ A_{\omega}\cap A'\to N^{\omega}\cap N'$$
 are surjective. 
\end{thm}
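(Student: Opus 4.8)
The plan is to reduce the statement, via Kaplansky's density theorem, to a lifting problem modulo the trace--kernel ideal of the norm ultrapower, and then to solve that problem by correcting an arbitrary lift with a quasicentral approximate unit.

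First I would handle the map $A_\omega\to N^\omega$. Here $N^\omega$ is the tracial ultrapower: the quotient of $\ell^\infty(\N,N)$ by the ideal of bounded sequences $(y_n)$ with $\lim_{n\to\omega}\|y_n\|_{2,\tau}=0$. Given $x\in N^\omega$ with representing sequence $(x_n)$, $\|x_n\|\le\|x\|$: since by Kaplansky's density theorem the unit ball of $\pi_\tau(A)$ is dense in the unit ball of $N$ for the $\|\cdot\|_{2,\tau}$--metric, one can choose $a_n\in A$ with $\|a_n\|\le\|x_n\|$ and $\|\pi_\tau(a_n)-x_n\|_{2,\tau}<1/n$, so that $(a_n)_n\in A_\omega$ maps to $x$. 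Thus the natural $\ast$--homomorphism $\Phi\colon A_\omega\to N^\omega$ is surjective, and one checks that its kernel is the trace--kernel ideal $J:=\{(a_n)_n\in A_\omega:\lim_{n\to\omega}\|a_n\|_{2,\tau}=0\}$, so that $\Phi$ induces an isomorphism $A_\omega/J\cong N^\omega$.

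For the relative commutants, obviously $\Phi(A_\omega\cap A')\subseteq N^\omega\cap\pi_\tau(A)'$, and a second Kaplansky argument gives $N^\omega\cap\pi_\tau(A)'=N^\omega\cap N'$: for $z=(z_n)\in N^\omega$, $b\in N$ and $a\in\pi_\tau(A)$ one has $\|[z_n,b]\|_{2,\tau}\le\|[z_n,a]\|_{2,\tau}+2\|z_n\|\,\|b-a\|_{2,\tau}$ (the last summand via $\|(b-a)z_n\|_{2,\tau}^2=\tau\big((b-a)^*(b-a)z_nz_n^*\big)\le\|z_n\|^2\|b-a\|_{2,\tau}^2$), so approximating $b$ strongly by elements of $\pi_\tau(A)$ forces $\lim_{n\to\omega}\|[z_n,b]\|_{2,\tau}=0$. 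Hence it suffices to lift an arbitrary $x\in N^\omega\cap N'$ to $A_\omega\cap A'$. Choose a Kaplansky lift $x_0\in A_\omega$ with $\|x_0\|=\|x\|$; since $x$ commutes with $\pi_\tau(A)$, every commutator $[x_0,a]$ ($a\in A$) lies in $J$. Let $\mathcal{B}:=C^*(A,x_0)\subseteq A_\omega$ and let $E$ be the closed ideal of $\mathcal{B}$ generated by $\{[x_0,a]:a\in A\}$; then $\mathcal{B}$ is separable and $E\subseteq J$. By Arveson's theorem $E$ has an approximate unit that is quasicentral in $\mathcal{B}$, and combining this with a reindexation along $\omega$ (Kirchberg's $\varepsilon$--test) one obtains a positive contraction $e\in A_\omega$ with $e\in J$, $[e,a]=0$ for all $a\in A$, and $ec=c$ for all $c\in E$. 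In particular $e[x_0,a]=[x_0,a]$, whence $(1-e)^{1/2}[x_0,a]=0$, for every $a\in A$. Putting $x':=(1-e)^{1/2}x_0(1-e)^{1/2}$, we get $\Phi(x')=x$ (as $e\in J=\ker\Phi$), and for $a\in A$, since $(1-e)^{1/2}$ commutes with $a$, $\;[x',a]=(1-e)^{1/2}[x_0,a](1-e)^{1/2}=0$. Thus $x'\in A_\omega\cap A'$ is the desired lift (and $\|x'\|=\|x\|$).

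The main obstacle is the construction of $e$ — equivalently, the fact that the trace--kernel ideal $J$ is a $\sigma$--ideal of $A_\omega$. The two Kaplansky arguments and the algebraic identity for $x'$ are routine; the work lies in producing a single positive contraction in $A_\omega$ that at once has vanishing $2$--norm along $\omega$, commutes in norm with all of $A$, and acts as a unit on the whole commutator ideal $E$. The tension between the last two requirements is why one passes through the quasicentral--approximate--unit construction for $E\triangleleft\mathcal{B}$ rather than a naive functional calculus, and turning an approximate unit into a genuine one is where the reindexation / $\varepsilon$--test (equivalently, the countable saturation of the ultrapower) enters.
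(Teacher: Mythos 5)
Your argument is correct, but note that the paper does not actually prove Theorem \ref{thm: SKR theorem}: it quotes it from Kirchberg--R{\o}rdam \cite{KirRor.Crelle2013} and Sato \cite{Sato11}. Your route --- Kaplansky density to get surjectivity of $A_\omega\to N^\omega$ with kernel the trace--kernel ideal $J$, then a quasicentral approximate unit for the commutator ideal plus the $\varepsilon$-test to produce the correcting contraction $e$ (i.e.\ the fact that $J$ is a $\sigma$-ideal) --- is exactly the specialization to the tracial case of the machinery the paper develops in Section 4 (Proposition \ref{prop: Obs (7)}, Theorem \ref{thm: pi_I is surjective}, Proposition \ref{prop: Obs (9)-1}) for the non-tracial generalization, so no genuinely different method is involved.
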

Sato \cite{Sato11} proved the above result for the case of nuclear C$^{\ast}$-algebras and the general case was proved in \cite{KirRor.Crelle2013}. Theorem \ref{thm: SKR theorem} shows that if $A$ has a faithful tracial state $\tau$ such that $\pi_{\tau}(A)''$ is non-McDuff, then $F(A)$ has a character. Our proof follows closely this idea of mapping the C$^*$-central sequence algebra onto the W$^*$-central sequence algebra of some GNS representation of the original algebra. However, in order to prove the non-commutativity of $F(A)$ for arbitrary non-type I C$^\ast$ algebras, we have to use non-tracial W$^*$-ultraproducts and its central sequence algebras. Also, we have to pass to sub-quotients instead of genuine quotients in order to estimate the size of $F(A)$: the second named author has shown that $F(C_{\rm{red}}^\ast (\mathbb{F}_2))$ is stably finite (see \cite[$\S$2]{Kir.AbelProc}), while W$^*$-central sequence algberas can be a type III factor. Thus we prove the following theorem, which implies Theorem \ref{thm:main}:
\begin{thm}\label{thm: maindesu}Let $A$ be a non-type ${\rm{I}}$ separable ${\rm{C}}^{\ast}$-algebra. Then for every properly infinite hyperfinite von Neumann algebra $M$ with separable predual, there exists a closed ideal $I$ of $F(A)$, a ${\rm C}^{\ast}$-subalgebra $B$ of $F(A)/I$ and a closed ideal $J$ of $B$ such that $B/J$ is isomorphic to $M'\cap M^{\omega}$, where $M^{\omega}$ is the Ocneanu ultrapower of $M$.   
\end{thm}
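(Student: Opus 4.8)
The plan is to adapt the proof of Theorem \ref{thm: SKR theorem}, replacing the tracial GNS picture by a (generally non-tracial) GNS representation and the tracial ultrapower by the Ocneanu ultrapower. Two inputs are needed. First, since $A$ is separable and not of type $\mathrm{I}$, Glimm's theorem provides a $C^{\ast}$-subalgebra $C\subseteq A$, a closed ideal $C_{0}\lhd C$, and a surjection $q\colon C\twoheadrightarrow\cB$ onto the CAR algebra $\cB=M_{2^{\infty}}$ with $\ker q=C_{0}$; in fact Glimm's construction furnishes $C$ together with a generating system of approximate matrix units, which is used later. Second --- and this is where proper infiniteness and hyperfiniteness of $M$ enter --- one shows that there is a state $\phi$ on $\cB$ with $\pi_{\phi}(\cB)''\cong M$; equivalently, a properly infinite hyperfinite von Neumann algebra with separable predual contains a $\sigma$-weakly dense unital copy of the CAR algebra (one uses that such an $M$ contains unital copies of every $M_{2^{k}}$ with $\sigma$-weakly dense union, together with states of the appropriate Powers/Araki--Woods type for the type $\mathrm{II}_{\infty}$ and type $\mathrm{III}$ parts). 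Fixing such a $\phi$, the representation $\rho:=\pi_{\phi}\circ q\colon C\to B(H_{\phi})$ factors through $q$, kills $C_{0}$, and satisfies $\rho(C)''=M$; as $M$ has separable predual, its Ocneanu ultrapower $M^{\omega}$ is at our disposal.

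Fix a faithful normal state on $M$ and form $M^{\omega}$. A bounded sequence $(c_{n})$ in $C$ yields the bounded sequence $(\rho(c_{n}))$ in $M$, and, after restricting to the $\ast$-subalgebra of sequences that are $\omega$-equicontinuous --- so that they represent elements of $M^{\omega}$; this is a subtlety peculiar to the non-tracial case --- the assignment $(c_{n})\mapsto(\rho(c_{n}))$ induces a $\ast$-homomorphism $\Psi$ from a $C^{\ast}$-subalgebra of $C_{\omega}$ into $M^{\omega}$, which factors through $\cB_{\omega}=C_{\omega}/(C_{0})_{\omega}$ because $\rho$ kills $C_{0}$. Using Kaplansky density in the $\ast$-strong topology (legitimate since $\phi$ is faithful and normal), any bounded sequence in $M$ lifts to a norm-bounded sequence in $\cB$ approximating it $\ast$-strongly along $\omega$, so $\Psi$ is surjective onto $M^{\omega}$. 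Restricting, $\Psi$ carries central sequences to central sequences, and the crucial assertion --- the analogue for $\rho$ of the second surjectivity in Theorem \ref{thm: SKR theorem} --- is that
$$\Psi\bigl(C'\cap C_{\omega}\bigr)=M'\cap M^{\omega}$$
(where $\Psi$ is defined).

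This surjectivity is the technical heart, and the step I expect to be the main obstacle. Given $(x_{n})\in M'\cap M^{\omega}$, Kaplansky density only produces a lift $(c_{n})$ in $C$ for which the commutators $[c_{n},c]$ $(c\in C)$ are small in the $\ast$-strong seminorm pulled back along $\rho$, whereas membership in $C'\cap C_{\omega}$ demands $\lim_{n\to\omega}\|[c_{n},c]\|=0$ in operator norm. Upgrading $\ast$-strong asymptotic commutation to norm asymptotic commutation is the non-tracial counterpart of Sato's lemma \cite[Lemma 2.1]{Sato11}: in the tracial setting one averages a $\|\cdot\|_{2}$-central sequence against the trace to make it norm-central, but no trace is available here, so one must instead use the modular automorphism group of $\phi$ together with the approximately finite-dimensional structure of $M$, choosing the lifts $c_{n}$ inside finite-dimensional subfactors of $\cB$ on which $\phi$ is almost tracial and correcting along $\omega$. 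A second difficulty is that a commutator $[c_{n},c]$ that is $\ast$-strong-small after applying $\rho$ may still be large in $C$ because of its part in $C_{0}=\ker\rho$; this is removed by perturbing $(c_{n})$ within $(C_{0})_{\omega}$ via a quasicentral approximate unit of $C_{0}$, the approximate matrix units from Glimm's construction making the correction explicit. Both adjustments must be performed at once and compatibly with the reindexing built into $\omega$.

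It remains to house the outcome inside $F(A)=(A'\cap A_{\omega})/\Ann(A,A_{\omega})$. Invoking the stability and functoriality properties of $F$ from \cite{Kir.AbelProc} together with the Glimm structure, one adjusts the sequences produced above so as to become asymptotically central for all of $A$ modulo $\Ann(A,A_{\omega})$; letting $I\lhd F(A)$ be the ideal making the resulting assignment well defined and $B\subseteq F(A)/I$ the $C^{\ast}$-subalgebra generated by the images, one obtains a surjection $B\twoheadrightarrow M'\cap M^{\omega}$, whose kernel is the required ideal $J$. The passage to a subalgebra $B$ and ideals $I,J$ --- rather than to an honest quotient of $F(A)$ --- is forced by three features: $C$ is only a proper subquotient of $A$; the domain of $\Psi$ is only a $C^{\ast}$-subalgebra of $C_{\omega}$ (the $\omega$-equicontinuity restriction); and $\Psi$, although onto, is very far from injective --- indeed $F(C_{\mathrm{red}}^{\ast}(\mathbb{F}_{2}))$ is stably finite whereas $M$ may be of type $\mathrm{III}$. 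This proves Theorem \ref{thm: maindesu}; taking $M$ to be the hyperfinite factor of type $\mathrm{II}_{\infty}$, whose asymptotic centralizer $M'\cap M^{\omega}$ contains a copy of $\mathcal{R}'\cap\mathcal{R}^{\omega}\neq\C$ and is therefore non-commutative, one recovers Theorem \ref{thm:main}.
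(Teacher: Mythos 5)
Your overall strategy---mapping norm-central sequences of a separable algebra onto the Ocneanu central sequence algebra of a hyperfinite weak closure---is the right one, but the proposal has two genuine gaps, both at the steps you yourself flag as obstacles without resolving them.

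First, the surjectivity $\Psi(C'\cap C_\omega)=M'\cap M^\omega$ is asserted, not proved. Your suggested route (a ``non-tracial Sato lemma'' via the modular automorphism group of $\phi$ and lifts inside finite-dimensional subfactors on which $\phi$ is almost tracial) is not a workable sketch: for a type $\mathrm{III}$ target there is no almost-tracial exhaustion, and no averaging or correction procedure is actually supplied. The paper's mechanism is entirely different and is the real content of the proof: one shows that $\pi_\omega(D_{\rho})$ is a $\sigma$-ideal of $\pi_\omega(\mathcal{N}(D_{\rho}))$ (Proposition \ref{prop: Obs (7)}, proved with a quasicentral approximate unit of $C^*(S\cup\{y\})\cap D_{\rho}$ and the $\varepsilon$-test, Lemma \ref{lem: epsilon test}), and then invokes the general fact (Theorem \ref{thm: pi_I is surjective}) that for a $\sigma$-ideal $I$ of $\mathcal{A}$ and separable $C\subset\mathcal{A}$ the map $C'\cap\mathcal{A}\to\pi_I(C)'\cap(\mathcal{A}/I)$ is surjective. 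Applied with $C=\pi_\omega(\Delta(A))$, this upgrades $\ast$-strong asymptotic centrality to norm asymptotic centrality in one stroke, with no modular theory. If you want a complete proof you must either reproduce this $\sigma$-ideal argument or supply an honest substitute for it.

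Second, you represent only the Glimm subquotient $C/C_0\cong M_{2^\infty}$ of $A$ densely into $M$, and must then pass from $C$-central sequences to $A$-central sequences; your final paragraph appeals to ``stability and functoriality,'' but central sequences of a subalgebra are in general nowhere near central for the ambient algebra, and no mechanism for the adjustment is given. The paper bypasses this entirely via Mar\'echal's theorem (Theorem \ref{prop: generalization of Elliott-Woods}): since $M$ contains a weakly dense copy of $M_{2^\infty}$ by Elliott--Woods, Mar\'echal's strengthening of Glimm yields a representation $d$ of \emph{all} of $A$ with $d(A)''=M$. Then $F(d(A))=F(A/\ker d)$ is an honest quotient of $F(A)$ (Corollary \ref{cor: F(A/J) is a quotient of F(A)}, again a $\sigma$-ideal argument applied to $(\ker d)_\omega$), and this quotient is precisely where the ideal $I$ in the statement comes from. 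Without Mar\'echal's theorem or a replacement for it, your construction never reaches $F(A)$ itself.
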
 
If $A$ is moreover simple, then we may choose $I=\{0\}$, so that $F(A)$ contains uncountably many non-separable (in the W$^*$- sense) type III factors (e.g. the Ocneanu central sequence algebras of Powers factors) as sub-quotients. 
\begin{rem}[Added November 4, 2014] 
After a seminar talk in Kyoto University, the first named author was informed from Professor Narutaka Ozawa that the non-commutativity of $F(A)$ for a separable non-type I C$^*$-algebra follows from the work of Kishimoto-Ozawa-Sakai \cite{KOS03}. We include his proof in the Appendix. 
\end{rem}

\medskip

\section{Preliminaries and notations}\label{sec:prelim.notat}
\subsection{Central sequences in C$^*$-algebras and the invariant $F(A)$}
Throughout the paper, fix a free ultrafilter $\omega$ on $\N$. For a sequence $\bm{A}=(A_1,A_2,\dots)$ of C$^*$-algebras, we denote by $\ell_{\infty}(\bm{A})$ the C$^{\ast}$-algebra of all bounded sequences $(a_1,a_2,\dots)\in \prod_{n\in \mathbb{N}}A_n$. In this section we only consider the constant algebra case $A_n\equiv A$, and we denote $\ell_{\infty}(\bm{A})$ as $\ell_{\infty}(A)$.

The C$^*$-ultraproduct algebra $A_\omega$ is defined by $A_\omega :=\ell_\infty(A)/c_\omega(A)$,
where the closed ideal $c_\omega(A)$ of $\ell_{\infty}(A)$ consists of the sequences 

$(a_1,a_2,\ldots)\in \ell_\infty(A)$
with $\lim_{n\to \omega}  \| a_n\|=0$.

We denote the quotient epimorphism   
$\ell_{\infty}(A)\ni (a_1,a_2,\ldots)\mapsto (a_1,a_2,\ldots)+c_\omega(A)\in A_{\omega}$
by $\pi_\omega$.
We define ultra-powers $T_\omega\colon B_\omega \to A_\omega$  of 
a bounded linear map $T\colon B\to A$ in a similar way 
by 
$$T_\omega ((b_1,b_2,\ldots)+c_\omega(B)):= 
(T(b_1),T(b_2),\ldots)+c_\omega (A)\,.$$

The elements $a\in A$ will be naturally identified with 
the image $\pi_\omega(\Delta(a))\in A_\omega$
of  $\Delta(a):=(a,a, \ldots)$. 

Then  $A'\cap A_\omega$ is the natural image in $A_\omega$ of the bounded
\emph{$\omega$-central sequences}   $(a_1,a_2,\ldots)\in \ell_\infty(A)$  defined 
by
$$\lim_{n\to \omega} \| a_nb -ba_n \| =0, \quad \ \ b\in A\,.$$
The (two-sided) {\it annihilator} $\Ann (A, A_\omega)$ is a closed ideal of 
$A'\cap A_\omega$ defined as  the $\pi_\omega$-image of the $\omega$-approximately annihilating 
sequences
$(a_1,a_2,\ldots)\in \ell_\infty(A)$  that are  defined by
$$\lim_{n\to \omega}\| a_nb\| + \|ba_n \| =0,\quad  \,\, b\in A\,.$$
Finally we can form the quotient \cst-algebra $F(A)$ of this sequence algebras.
\begin{definition} Let $A$ be a C$^*$-algebra. The invariant $F(A)$ is defined as the quotient C$^*$-algebra 
$$F(A):=(A'\cap A_\omega)/ \Ann (A, A_\omega).$$  
\end{definition}
Note that $F(A)$ may depend on $\omega$ (see \cite{FarahPhillipsSteprans,Farah09}), but the structure of separable C$^*$-subalgebras of $F(A)$ do not. Therefore we use the notation $F(A)$.  
In the class of $\sigma$-unital  \cst-algebras $A$ is $F(A)$ a stable invariant.
See \cite[sec.~1]{Kir.AbelProc}. 
We do not have a general rule that shows  which  AF C*-algebras $A$ of type I
and for which sub-homogeneous C*-algebras $A$ have \emph{commutative} 
 invariant $F(A)$. We remark that there is a C$^*$-algebra $B$ with commutative $F(B)$ which contains a C$^*$-subalgebra $A$ with non-commutative $F(A)$.
\begin{ex}\label{ex: F(A) is NC} Consider the compact Hausdorff space $T=\{0\}\cup \{\frac{1}{n};n\in \mathbb{N}\}$. 
Define $B:= \mathrm{C}(T, M_2(\mathbb{C}))=M_2(C(T)))$. 
By direct calculation, we see that
\[F(B)=\left \{\mattwo{f}{0}{0}{f};f\in F(C(T))\right \}\cong F(C(T)),\]
which is commutative. 
On the other hand, consider its \cst-subalgebra 
$A:= \{ f\in B\,;\,\, f(0)\in \mathbb{C}1\}$. Then $F(A)$ is non-commutative (and is non-separable): to see this, for each $\alpha>0$ and $n\in \mathbb{N}$, define $f^{(n)}_{\alpha}\in A$ by $f_{\alpha}^{(n)}(0)=1$ and 
set $f_{\alpha}^{(n)}(\tfrac{1}{k})$ to be $\mattwo{1}{1}{0}{1}$ for $n\le k\le (\alpha+1)n$ and  $\mattwo{1}{0}{0}{1}$ otherwise.  
Then $f_{\alpha}:=(f^{(1)}_{\alpha},f^{(2)}_{\alpha},\dots)+c_{\omega}(A)\in A'\cap A_{\omega}$. Indeed, let $g=[g_{ij}]\in A$, where $g_{ij}\in C(T)$ with $g_{ij}(0)=\delta_{ij}\ (i,j=1,2)$ and let $\varepsilon>0$. 
Then since $\|g(\frac{1}{k})-1\|\stackrel{k\to \infty}{\to}0$, there exists $N\in \mathbb{N}$ such that the norm of 
\[X_k=\left [\mattwo{1}{1}{0}{1},\mattwo{g_{11}(\frac{1}{k})}{g_{12}(\frac{1}{k})}{g_{21}(\frac{1}{k})}{g_{22}(\frac{1}{k})}\right ]=\mattwo{g_{21}(\frac{1}{k})}{g_{22}(\frac{1}{k})-g_{11}(\frac{1}{k})}{0}{-g_{21}(\frac{1}{k})}\]
is less than $\varepsilon$ for $k\ge N$. We have 
\eqa{
(f_{\alpha}^{(n)}g-gf_{\alpha}^{(n)})(\tfrac{1}{k})=\begin{cases}
\ \ X_k & (n\le k\le (\alpha+1)n)\\
\ \ \ 0 & (\text{otherwise}).
\end{cases}
}

Then for $n\ge N$, one has $\sup_{k\in \mathbb{N}}\|[f_{\alpha}^{(n)}(\frac{1}{k}),g(\frac{1}{k})]\|<\varepsilon$. Since $\varepsilon>0$ is arbitrary, we see that 
$\lim_{n\to \omega}\|f_{\alpha}^{(n)}g-gf_{\alpha}^{(n)}\|=0$ for every $g\in A$. Therefore $f_{\alpha}\in A'\cap A_{\omega}\ (\alpha>0)$. 
Now let $\alpha>\beta>0$. There exists $N\in \mathbb{N}$ such that $(\alpha-\beta)N\ge 2$. Then for every $n\ge N$, there exists $k_n\in \mathbb{N}$ with $(\beta+1)n<k_n\le (\alpha+1)n$, so that 
\[\left \|f_{\alpha}^{(n)}(\tfrac{1}{k_n})-f_{\beta}^{(n)}(\tfrac{1}{k_n})\right \|=\left \|\mattwo{0}{1}{0}{0}\right \|=1,\]
whence $\|f_{\alpha}^{(n)}-f_{\beta}^{(n)}\|=1\ (n\ge N)$, and $\|f_{\alpha}-f_{\beta}\|=1$. Thus $F(A)$ is non-separable.  
Now let $f:=f_{1}, g:=f_{1}^*\in F(A)$. Then $fg\neq gf$, because for each $n$, one has 
\[[f^{(n)},g^{(n)}](\tfrac{1}{n})=\left [\mattwo{1}{1}{0}{1},\mattwo{1}{0}{1}{1}\right ]=\mattwo{1}{0}{0}{-1}.\]
Therefore $F(A)$ is non-commutative.  
\end{ex}

\section{The Ocneanu ultraproduct and C$^*$-to-W$^*$ ultraproduct}\label{sec: Ocneanu UP}
 We recall the definition of (generalized) Ocneanu ultraproduct \cite{Ocneanu85} of W$^*$-algebras. 
\begin{definition} 
Let $\bm{M}=(M_1,M_2,\dots)$ be a sequence of $\sigma$-finite W$^*$-algebras, and let $\bm{\rho}=(\rho_1,\rho_2,\dots)$ be a sequence of faithful normal states with $\rho_n\in (M_n)_*\ (n\in \mathbb{N})$. 
We define $D_{\bm{\rho}}$ to be the hereditary C$^*$-subalgebra of $\ell_{\infty}(M_1,M_2,\dots)$ consisting of those $(x_1,x_2,\dots)\in \ell_{\infty}(M_1,M_2,\dots)$
satisfying 
\[\lim_{n\to \omega}\|x_n\|_{\rho_n}^{\sharp}=0.\] 
Here, we used the standard notation $\|a\|_{\rho}=\rho(a^*a)^{\frac{1}{2}},\ \|a\|_{\rho}^{\sharp}=\rho(a^*a+aa^*)^{\frac{1}{2}}$. 
The normalizer algebra $\mathcal{N}(D_{\bm{\rho}})$ is then defined by $\{x\in \ell_{\infty}(\bm{M}); xD_{\bm{\rho}}+D_{\bm{\rho}}x\subset D_{\bm{\rho}}\}$. 
The {\it  Ocneanu ultraproduct} $(M_n,\rho_n)^{\omega}$ is defined as the quotient C$^*$-algebra $\mathcal{N}(D_{\bm{\rho}})/D_{\bm{\rho}}$ which is in fact a W$^*$-algebra.   
\end{definition}

\begin{rem}Ocneanu studied the constant algebra and constant state case $M_n\equiv M,\rho_n\equiv \rho$, in which case $(M,\rho)^{\omega}$ does not depend on the choice of $\rho$. Therefore we write $(M,\rho)^{\omega}$ as $M^{\omega}$ in this case. The non-constant algebra/state case is studied in \cite{AndoHaagerup}.   
\end{rem}

In a more general context, the second named author introduced the C$^*$-to-W$^*$-ultraproduct $(A_n,\rho_n)_{\omega}$ \cite{Kirchberg95} for a sequence $\bm{A}=(A_1,A_2,\dots)$ of C$^*$-algebras and states $\bm{\rho}=(\rho_1,\rho_2,\dots)$. 
Recall that an operator system $X$ is called a (unital) {\it {\rm{C}}$^*$-system}, if the second conjugate operator system $X^{**}$ is unital completely isometrically isomorphic (u.c.i.i) to a C$^*$-algebra, and every unital complete isometry (u.c.i) $V$ from $X^{**}$ onto a C$^*$-algebra $A$ induces on $X^{**}$ the structure of a C$^*$-algebra such that the given matrix order unit structure and the matrix order unit structure of the C$^*$-algebra coincide.   
\begin{definition}[C$^*$-to-W$^*$ ultraproduct]
Let $(\bm{A},\bm{\rho})=(A_n,\rho_n)_{n=1}^{\infty}$ be a sequence of C$^*$-algebras equipped with (not necessarily faithful) states. The {\it {\rm{C}}$^*$-to-{\rm{W}}$^*$-ultraproduct} $(A_n,\rho_n)_{\omega}$ is defined as the quotient C$^*$-system $\ell_{\infty}(\bm{A})/(L_{\bm{\rho}}+L_{\bm{\rho}}^*)$, where $L_{\bm{\rho}}$ is the closed left ideal of $\ell_{\infty}(\bm{A})$ consisting of those $(a_1,a_2,\dots)\in \ell_{\infty}(\bm{A})$ satisfying
$$\lim_{n\to \omega}\rho_n(a_n^*a_n)=0\,.$$
\end{definition}
It was shown in \cite{Kirchberg94} (with results from \cite{Kirchberg95}) that $(A_n,\rho_n)_{\omega}$ is c.i.i. to a W$^*$-algebra. For later discussions, let us include a brief summary. 
We see in fact that the Ocneanu ultraproduct can be identified with the special case of C$^*$-to-W$^*$ ultraproducts. 
Define a state $\rho_{\omega}$ on $\ell_{\infty}(\bm{A})$ by 
\[\rho_{\omega}((a_1,a_2,\dots)):=\lim_{n\to \omega}\rho_n(a_n^*a_n),\ \ \ \ (a_1,a_2,\dots)\in \ell_{\infty}(\bm{A}).\]
For each $n\in \mathbb{N}$, let $\overline{\rho}_n\in A_{n}^{**}$ be the extension of $\rho_n$ to a normal state on $A_{n}^{**}$, and let $p_n:=\text{supp}(\overline{\rho}_n)\in A_{n}^{**}$ be the support projection of $\overline{\rho}_n$. Define a von Neumann algebra $M_n:=p_nA_{n}^{**}p_n$ and a faithful normal state $\mu_n:=\overline{\rho}_n|_{M_n}$ on $M_n$. Let $\overline{\rho}_{\omega}$ be the extension of $\rho_{\omega}$ to a normal state on $\ell_{\infty}(\bm{A})^{**}$. 
Also, define a state $\mu_{\omega}$ on $\ell_{\infty}(\bm{M})$ as the pointwise $\omega$-limit of $(\mu_1,\mu_2,\dots)$.  Now define the closed left ideal $L_{\bm{\rho}}\subset \ell_{\infty}(\bm{A})$ (resp. $L_{\bm{\mu}}\subset \ell_{\infty}(\bm{M})$) with respect to $\bm{A}=(A_1,A_2,\dots), \bm{\rho}=(\rho_1,\rho_2,\dots)$ (resp. $\bm{M}=(M_1,M_2,\dots), \bm{\mu}=(\mu_1,\mu_2,\dots)$) as before.  
Then by \cite[Proposition 2.1 (iii)]{Kirchberg94}, we have 
\begin{equation}
\ell_{\infty}(\bm{A})/(L_{\bm{\rho}}+L_{\bm{\rho}}^*)\stackrel{\rm{c.i.i.}}{\cong}\ell_{\infty}(\bm{M})/(L_{\bm{\mu}}+L_{\bm{\mu}}^*)\,.\label{eq: we may reduce to W* case}
\end{equation}
Therefore in order to show that $(A_n,\rho_n)_{\omega}=\ell_{\infty}(\bm{A})/(L_{\bm{\rho}}+L_{\bm{\rho}}^*)$ is c.i.i. to a W$^*$-algebra, we may assume that each $A_n$ is a W$^*$-algebra and $\rho_n$ is a normal faithful state. Moreover, it follows that (\cite[Proposition 2.1, Lemma 2.3]{Kirchberg94})
\begin{equation}
\ell_{\infty}(\bm{A})=L_{\bm{\rho}}+L_{\bm{\rho}}^*+\mathcal{N}(D_{\bm{\rho}}).\label{eq: Kirchberg decomposition}
\end{equation}
An alternative proof of (\ref{eq: Kirchberg decomposition}) for the case of W$^*$-algebras/normal faithful states is given in \cite[Proposition 3.14]{AndoHaagerup}. Now, since $\mathcal{N}(D_{\bm{\mu}})\cap (L_{\bm{\mu}}+L_{\bm{\mu}}^*)=L_{\bm{\mu}}\cap L_{\bm{\mu}}^*$ (see. e.g. \cite[Lemma 3.10 (2)]{AndoHaagerup}), we have (by (\ref{eq: we may reduce to W* case})) 
$$(A_n,\rho_n)_{\omega}\stackrel{\rm{c.i.i.}}{\cong} \mathcal{N}(D_{\bm{\mu}})/(L_{\bm{\mu}}+L_{\bm{\mu}}^*)=\mathcal{N}(D_{\bm{\mu}})/D_{\bm{\mu}}=(M_n,\mu_n)^{\omega}\,.$$
That is, the C$^*$-to-W$^*$ ultraproduct $(A_n,\rho_n)_{\omega}$ is naturally identified with the generalized Ocneanu ultraproduct $(M_n,\mu_n)^{\omega}$. 
In the sequel, we identify $(A_n,\rho_n)_{\omega}=\mathcal{N}(D_{\bm{\rho}})/D_{\bm{\rho}}$. 
If each $A_n$ is a weakly dense C$^*$-subalgebra of a W$^*$ algebra, then we do not need to pass to $A_n^{**}$. 
\begin{prop}\label{prop: connection between C* to W* and Ocneanu}
For each $n\in \mathbb{N}$, let $\mu_n$ be a normal faithful state on a {\rm{W}}$^*$-algebra $M_n$, and let $A_n\subset N_n$ be a weakly dense {\rm{C}}$^*$-subalgebra. Set $\rho_n:=\mu_n|_{A_n}$. Then the {\rm{C}}$^*$-to-{\rm{W}}$^*$ ultraproducts $(A_n,\rho_n)_{\omega}$ and $(M_n,\mu_n)_{\omega}$ are naturally isomorphic.
\end{prop}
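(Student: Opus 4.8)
The plan is to reduce everything to the already-established identification of the C$^*$-to-W$^*$ ultraproduct with the Ocneanu ultraproduct. Recall the excerpt establishes, for W$^*$-algebras $M_n$ with faithful normal states $\mu_n$, the chain $(M_n,\mu_n)_\omega \stackrel{\mathrm{c.i.i.}}{\cong} \mathcal{N}(D_{\bm\mu})/D_{\bm\mu} = (M_n,\mu_n)^\omega$. So it suffices to show that the left ideal $L_{\bm\rho} \subset \ell_\infty(\bm A)$ and the normalizer/annihilator data built from $\rho_n = \mu_n|_{A_n}$ coincide, under the inclusion $\ell_\infty(\bm A) \hookrightarrow \ell_\infty(\bm M)$, with those built from $\mu_n$ — i.e. the point is that passing to the $A_n^{**}$ and then compressing by the support projection is unnecessary when $A_n$ is already weakly dense in a W$^*$-algebra $N_n$ carrying $\mu_n$ as a faithful normal state (so in fact $M_n = N_n$ on the nose, the support projection being $1_{N_n}$).

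First I would observe that since $\mu_n$ is a faithful normal state on $N_n$ and $A_n$ is $\sigma$-weakly dense in $N_n$, the restriction $\rho_n = \mu_n|_{A_n}$ has its support projection in $A_n^{**}$ mapping (under the canonical surjection $A_n^{**}\to N_n$) onto $1_{N_n}$; but more to the point, one checks that the GNS construction of $A_n$ with respect to $\rho_n$ yields the same Hilbert space and the same seminorms $\|\cdot\|_{\rho_n}$, $\|\cdot\|_{\rho_n}^\sharp$ as the GNS construction of $N_n$ with respect to $\mu_n$, because $A_n$ is a core for these. Concretely, for $(a_1,a_2,\dots)\in\ell_\infty(\bm A)$ one has $\rho_n(a_n^*a_n) = \mu_n(a_n^*a_n)$ verbatim, so $L_{\bm\rho} = L_{\bm\mu}\cap\ell_\infty(\bm A)$ as left ideals. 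Hence the natural map $\ell_\infty(\bm A)/(L_{\bm\rho}+L_{\bm\rho}^*) \to \ell_\infty(\bm M)/(L_{\bm\mu}+L_{\bm\mu}^*) = (M_n,\mu_n)_\omega$ is isometric on the C$^*$-system level.

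Next I would show this map is surjective. By the Kirchberg decomposition \eqref{eq: Kirchberg decomposition}, $\ell_\infty(\bm M) = L_{\bm\mu}+L_{\bm\mu}^* + \mathcal N(D_{\bm\mu})$, so modulo $L_{\bm\mu}+L_{\bm\mu}^*$ every element is represented by a bounded sequence $(x_n)$ with $x_n\in N_n$; by Kaplansky density each $x_n$ is a $\sigma$-weak limit of a bounded net in $A_n$, and a standard diagonal/index-function argument over $\omega$ (choosing, for each $n$, an element $a_n\in A_n$ with $\|a_n\|\le\|x_n\|$ and $\|x_n-a_n\|_{\mu_n}^\sharp$ small — this is where one uses that $\|\cdot\|_{\mu_n}^\sharp$-approximation from the dense subalgebra is possible) produces $(a_n)\in\ell_\infty(\bm A)$ with $(x_n)-(a_n)\in L_{\bm\mu}+L_{\bm\mu}^*$. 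Thus the map is onto, hence a bijective complete isometry of C$^*$-systems, which by the C$^*$-system rigidity recalled before the definition forces it to be a $*$-isomorphism of the associated W$^*$-algebras. Finally, the identification is plainly natural in $(A_n\subset N_n,\mu_n)$ since every step is canonical.

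The main obstacle I anticipate is the surjectivity step — specifically, producing a genuinely \emph{bounded} sequence $(a_n)\in\ell_\infty(\bm A)$ that approximates a given $(x_n)\in\ell_\infty(\bm M)$ simultaneously in the $\sharp$-seminorm along $\omega$. Kaplansky density gives norm-bounded approximants, but one must be careful that the approximation in $\|\cdot\|_{\mu_n}^\sharp$ can be made uniformly good enough that the $\omega$-limit vanishes; the clean way is to approximate each $x_n$ to within $1/n$ in $\|\cdot\|_{\mu_n}^\sharp$ by some $a_n\in A_n$ with $\|a_n\|\le\|x_n\|\le\sup_m\|x_m\|$, which is exactly the $\sigma$-strong$^*$ density of the unit ball of $A_n$ in that of $N_n$. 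Everything else is bookkeeping with the already-proven structure of $(M_n,\mu_n)_\omega$ and the definitions of $L_{\bm\rho}$, $D_{\bm\rho}$, $\mathcal N(D_{\bm\rho})$.
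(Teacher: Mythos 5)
Your overall strategy---identify the ideal data under the inclusion $\ell_\infty(\bm{A})\hookrightarrow\ell_\infty(\bm{M})$ and use Kaplansky density (with norm control, approximating in $\|\cdot\|_{\mu_n}^{\sharp}$ to within $1/n$) for surjectivity---is the same as the paper's, and your surjectivity paragraph is essentially the paper's argument verbatim. The problem is the injectivity/isometry step. From $\rho_n=\mu_n|_{A_n}$ you correctly get $L_{\bm{\rho}}=L_{\bm{\mu}}\cap\ell_\infty(\bm{A})$, but you then assert that the induced map $\ell_\infty(\bm{A})/(L_{\bm{\rho}}+L_{\bm{\rho}}^*)\to\ell_\infty(\bm{M})/(L_{\bm{\mu}}+L_{\bm{\mu}}^*)$ is isometric. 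For that you need at least $(L_{\bm{\mu}}+L_{\bm{\mu}}^*)\cap\ell_\infty(\bm{A})=L_{\bm{\rho}}+L_{\bm{\rho}}^*$ (and really a distance estimate for the quotient norm), and this does \emph{not} follow from the single identity $L_{\bm{\rho}}=L_{\bm{\mu}}\cap\ell_\infty(\bm{A})$: an element of $\ell_\infty(\bm{A})$ could a priori decompose as $l+r$ with $l\in L_{\bm{\mu}}$ and $r\in L_{\bm{\mu}}^*$ neither of which lies in $\ell_\infty(\bm{A})$. This is the one genuinely nontrivial point of the proposition, and your write-up passes over it with a ``Hence.''

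The paper sidesteps the issue by presenting both ultraproducts as $\mathcal{N}(D)/D$ rather than as $\ell_\infty/(L+L^*)$, and proving the two identities $D_{\bm{\rho}}=D_{\bm{\mu}}\cap\ell_\infty(\bm{A})$ and $\mathcal{N}(D_{\bm{\rho}})=\mathcal{N}(D_{\bm{\mu}})\cap\ell_\infty(\bm{A})$. The only nontrivial inclusion is $\mathcal{N}(D_{\bm{\rho}})\subseteq\mathcal{N}(D_{\bm{\mu}})$, which is again a Kaplansky-density argument: given $x\in\mathcal{N}(D_{\bm{\rho}})$ and $e=(e_n)\in D_{\bm{\mu}}$, choose bounded $\tilde e_n\in A_n$ with $\|e_n-\tilde e_n\|_{\mu_n}^{\sharp}$, $\|(e_n-\tilde e_n)x_n\|_{\mu_n}^{\sharp}$ and $\|x_n(e_n-\tilde e_n)\|_{\mu_n}^{\sharp}$ all small, so that $\tilde e\in D_{\bm{\rho}}$ and the normalizer condition for $x$ against $e$ reduces to that against $\tilde e$. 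With these identities the kernel of $\mathcal{N}(D_{\bm{\rho}})\to\mathcal{N}(D_{\bm{\mu}})/D_{\bm{\mu}}$ is $\ell_\infty(\bm{A})\cap D_{\bm{\mu}}=D_{\bm{\rho}}$, so injectivity is immediate. If you insist on your formulation, you can in fact deduce $(L_{\bm{\mu}}+L_{\bm{\mu}}^*)\cap\ell_\infty(\bm{A})=L_{\bm{\rho}}+L_{\bm{\rho}}^*$ by combining the decomposition $\ell_\infty(\bm{A})=L_{\bm{\rho}}+L_{\bm{\rho}}^*+\mathcal{N}(D_{\bm{\rho}})$ with the identity $\mathcal{N}(D_{\bm{\mu}})\cap(L_{\bm{\mu}}+L_{\bm{\mu}}^*)=D_{\bm{\mu}}$ and the normalizer inclusion just described---but that inclusion is precisely the argument your proposal is missing, so either way it has to be supplied.
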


\begin{proof}
Let $L_{\bm{\mu}}\subset \ell_\infty(\bm{M})$ (resp. $L_{\bm{\rho}}\subset \ell_{\infty}(\bm{A})$)
denote the closed left-ideal defined by $\bm{\mu}=(\mu_1,\mu_2,\dots)$ (resp. $\bm{\rho}=(\rho_1,\rho_2,\dots)$). 

Then it holds that $L_{\bm{\rho}}= L_{\bm{\mu}}\cap \ell_\infty(\bm{A})$, and $D_{\bm{\rho}}=L_{\bm{\rho}}\cap L_{\bm{\rho}}^*=D_{\bm{\mu}}\cap \ell_{\infty}(\bm{A})$. 
We show that $\mathcal{N}(D_{\bm{\rho}})=\mathcal{N}(D_{\bm{\mu}})\cap \ell_{\infty}(\bm{A})$. It is clear that $\mathcal{N}(D_{\bm{\rho}})\supset \mathcal{N}(D_{\bm{\mu}})\cap \ell_{\infty}(\bm{A})$. Conversely, let $x=(x_1,x_2,\dots)\in \mathcal{N}(D_{\bm{\rho}})$. We must show that $x\in \mathcal{N}(D_{\bm{\mu}})$. Given $e=(e_1,e_2,\dots)\in D_{\bm{\mu}}$. 
By Kaplansky density Theorem, for each $n\in \mathbb{N}$, there exists $\tilde{e}_n\in A$ with $\|\tilde{e}_n\|\le \|e_n\|$ such that 
$\|(e_n-\tilde{e}_n)x_n\|_{\mu_n}^{\sharp}+\|x_n(e_n-\tilde{e}_n)\|_{\mu_n}^{\sharp}<\frac{1}{n}$ and 
$\|e_n-\tilde{e}_n\|_{\mu_n}^{\sharp}<\frac{1}{n}$.  In particular, 
$\tilde{e}=(\tilde{e}_1,\tilde{e}_2,\dots)\in D_{\bm{\rho}}$, and 
\[\lim_{n\to \omega}(\|e_nx_n\|_{\mu_n}^{\sharp}+\|x_ne_n\|_{\mu_n}^{\sharp})=\lim_{n\to \omega}(\|\tilde{e}_nx_n\|_{\rho_n}^{\sharp}+\|x_n\tilde{e}_n\|_{\rho_n}^{\sharp})=0,\]
by $x\in \mathcal{N}(D_{\rm{\rho}})$. 
Therefore the natural inclusion $\ell_{\infty}(\bm{A})\hookrightarrow \ell_{\infty}(\bm{M})$ induces a natural injective *-homomorphism
\[\Psi\colon (A_n,\rho_n)_{\omega}=\frac{\mathcal{N}(D_{\bm{\rho}})}{D_{\bm{\rho}}}=\frac{\mathcal{N}(D_{\bm{\mu}})\cap \ell_{\infty}(\bm{A})}{D_{\bm{\mu}}\cap \ell_{\infty}(\bm{A})}\to \frac{\mathcal{N}(D_{\bm{\mu}})}{D_{\bm{\mu}}}=(M_n,\mu_n)_{\omega}.\]
Also, $\Psi$ is surjective: if $x=(x_1,x_2,\dots)\in \mathcal{N}(D_{\bm{\mu}})$, then again by Kaplansky density Theorem, for each $n\in \mathbb{N}$ there exists $y_n\in A$ with $\|y_n\|\le \|x_n\|$ such that $\|x_n-y_n\|_{\mu_n}^{\sharp}<\frac{1}{n}$. Then $e:=(x_1-y_1,x_2-y_2,\dots)\in D_{\bm{\mu}}$, $y:=(y_1,y_2,\dots)\in \mathcal{N}(D_{\bm{\mu}})\cap \ell_{\infty}(\bm{A})=\mathcal{N}(D_{\bm{\rho}})$ and 
$\Psi(y+D_{\bm{\rho}})=x+D_{\bm{\mu}}$. Therefore $\Psi$ is a *-isomorphism.  
\end{proof}
\section{Proof of the Main Theorem}
\subsection{Reduction to W$^*$-algebra Case}
Let $A$ be a separable non-type I C$^{\ast}$-algebra. In this section, we show that in order to prove the non-commutativity of $F(A)$, we may assume that $A$ sits inside a hyperfinite properly infinite von Neumann algebra $M$. This follows from works of Glimm \cite{Glimm61}, Mar\'echal \cite{Marechal75} and Elliott-Woods \cite{ElliottWoods76}. Then in the next section we show that $F(A)$ contains $M'\cap M^{\omega}$ as a sub-quotient. For our purpose, the notion of $\sigma$-ideals introduced by the second named author plays a key role. Let us recall its definition and important consequences. 
\begin{definition}[\cite{Kir.AbelProc}]\label{def: sigma ideal}Let $I$ be a closed ideal of a C$^*$-algebra $A$. We call $I$ a {\it $\sigma$-ideal} of $A$, if for every separable C$^*$-subalgebra $B\subset A$ and every $d\in I_+$, there is a positive contraction $e\in B'\cap I$ with $ed=d$.
\end{definition}

\begin{thm}\cite[Proposition 1.6]{Kir.AbelProc}\label{thm: pi_I is surjective}
Let $I$ be a $\sigma$-ideal of a {\rm{C}}$^*$-algebra $A$. 
Then for every separable {\rm{C}}$^*$-subalgebra $C\subset A$, the sequence
\begin{equation}
0\to C'\cap I\to C'\cap A\stackrel{\pi_I}{\to} \pi_I(C)'\cap (A/I)\to 0\label{eq: KAP1.6}
\end{equation}
is exact ($\pi_I$ is the quotient map).  
\end{thm}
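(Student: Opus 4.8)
The plan is to prove exactness at each of the three spots in \eqref{eq: KAP1.6}. Exactness at $C' \cap I$ is just the statement that the inclusion $C' \cap I \hookrightarrow C' \cap A$ is injective, which is trivial. Exactness at $C' \cap A$ requires that the kernel of $\pi_I$ restricted to $C' \cap A$ is precisely $C' \cap I$; but $\ker \pi_I = I$ inside $A$, so $\ker(\pi_I|_{C' \cap A}) = (C' \cap A) \cap I = C' \cap I$, which is again immediate. So the entire content is surjectivity of the map $C' \cap A \to \pi_I(C)' \cap (A/I)$, and this is where the $\sigma$-ideal hypothesis is used.

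For surjectivity, I would start with an element $\bar{x} \in \pi_I(C)' \cap (A/I)$ and lift it to some $x \in A$ with $\pi_I(x) = \bar{x}$ (using that $\pi_I$ is surjective onto $A/I$); we may assume $x$ is a contraction, and by splitting into real and imaginary parts that $x$ is self-adjoint. The obstruction is that $x$ need not commute with $C$: for each $c \in C$ we only know $[x,c] \in I$. The idea is to correct $x$ by an element of $I$. Apply the $\sigma$-ideal property to the separable subalgebra $B$ generated by $C \cup \{x\}$ together with a suitable positive element $d \in I$; more precisely, I would choose a countable dense subset $(c_k)$ of the unit ball of $C$, a countable dense set of elements of $I$ witnessing the commutators $[x, c_k] \in I$, and combine them into a single strictly positive element $d$ of a separable sub-ideal $I_0 \subset I$ containing all the relevant commutators $[x,c_k]$ and their approximate-unit behaviour. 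The $\sigma$-ideal condition yields a positive contraction $e \in B' \cap I$ with $ed = d$, hence $e$ acts as a unit on $I_0$ (in particular $e[x,c_k] = [x,c_k]e = [x,c_k]$ for all $k$), and $e$ commutes with both $C$ and $x$.

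Now set $y := (1-e)^{1/2} x (1-e)^{1/2}$, computed in the multiplier algebra (or after adjoining a unit). Then $y \in A$, $y$ is a self-adjoint contraction, and $\pi_I(y) = \pi_I((1-e)^{1/2}) \bar{x} \pi_I((1-e)^{1/2})$; since $e \in I$, $\pi_I(1-e) = 1$, so $\pi_I(y) = \bar{x}$, i.e. $y$ is still a lift. The point of the correction is that $y$ now commutes with $C$: for $c \in C$,
\[
[y, c] = (1-e)^{1/2}[x,c](1-e)^{1/2},
\]
using $[e,c]=0$ and $[x,c]\in I_0$ together with the fact that $(1-e)^{1/2}$ annihilates $I_0$ up to the action of $e$ — one checks that $(1-e)^{1/2}$ kills $[x,c_k]$ because $e$ is a unit on $I_0$, so $(1-e)[x,c_k] = 0$, hence $(1-e)^{1/2}[x,c_k]=0$, giving $[y,c_k]=0$ for all $k$ and, by density and continuity, $[y,c]=0$ for all $c \in C$. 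Thus $y \in C' \cap A$ lifts $\bar{x}$, proving surjectivity.

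The step I expect to be the main obstacle is the bookkeeping in the choice of $d$: one must package \emph{all} the commutator data $\{[x,c_k] : k \in \mathbb{N}\}$ into a single positive element $d \in I$ so that a single $e$ obtained from the $\sigma$-ideal property simultaneously acts as a unit on every $[x,c_k]$; this requires taking $d$ to be, say, a strictly positive element of the (separable) hereditary subalgebra of $I$ generated by $\{[x,c_k]^*[x,c_k] + [x,c_k][x,c_k]^* : k\}$, and then verifying that $ed = d$ forces $e$ to act as a unit on that hereditary subalgebra, hence on each $[x,c_k]$. The passage from "$e$ a unit on a dense set of commutators" to "$[y,c]=0$ for all $c \in C$" is then a routine continuity argument. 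Everything else — the reductions to self-adjoint contractions, the multiplier-algebra computations with $(1-e)^{1/2}$ — is standard.
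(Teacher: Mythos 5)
Your argument is correct and is essentially the standard proof of this fact: the paper itself gives no proof here (it only cites \cite[Proposition 1.6]{Kir.AbelProc}), and the cited argument is exactly your correction $x\mapsto (1-e)^{1/2}x(1-e)^{1/2}=x(1-e)$ with $e$ supplied by the $\sigma$-ideal property applied to $C^*(C\cup\{x\})$ and a positive $d\in I$ dominating all commutators $[x,c_k]$. The only cosmetic slip is calling the hereditary subalgebra $\overline{dAd}$ ``separable'' (it need not be when $A$ is not), but that plays no role since all you use is that $ed=d$ forces $ez=z$ for every $z\in\overline{dAd}$, in particular for each $[x,c_k]$.
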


\begin{rem}
It is proved in \cite[Proposition 1.6]{Kir.AbelProc} that the sequence (\ref{eq: KAP1.6}) is not only exact but also {\it strongly locally semi-split}. That is, for every separable C$^*$-subalgebra $B\subset \pi_I(C)'\cap (A/I)$, there is a *-homomorphism $\psi\colon C_0((0,1])\otimes B\to C'\cap A$ such that $\pi_I\circ \psi(\iota\otimes b)=b\ (b\in B)$, where $\iota(t)=t,\ t\in (0,1]$. 
\end{rem}
\begin{prop}\cite[Corollary 1.7]{Kir.AbelProc}\label{prop: J_omega is sigmaideal}\ Let $A$ be a {\rm{C}}$^*$-algebra and $J$ be a norm-closed ideal of $A$. Then $J_{\omega}$ is a $\sigma$-ideal of $A_{\omega}$. 
\end{prop}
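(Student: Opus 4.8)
The plan is to verify directly the defining property of a $\sigma$-ideal: given a separable C$^*$-subalgebra $B \subset A_\omega$ and a positive element $d \in (J_\omega)_+$, I must produce a positive contraction $e \in B' \cap J_\omega$ with $ed = d$. First I would lift everything to the level of bounded sequences. Choose a countable dense subset $\{b^{(1)}, b^{(2}, \ldots\}$ of $B$ and fix, for each $k$, a representing sequence $(b^{(k)}_n)_n \in \ell_\infty(A)$ with $\pi_\omega((b^{(k)}_n)_n) = b^{(k)}$; similarly fix a representing sequence $(d_n)_n \in \ell_\infty(J)$ of positive contractions (replacing $d_n$ by $(d_n^* d_n)^{1/2} \wedge 1$ and rescaling, one may assume each $d_n$ is a positive contraction lying in $J$, since $J_\omega$ is exactly the image of $\ell_\infty(J)$). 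Also enlarge $B$ so that, without loss, $d$ itself belongs to $B$, say $d = b^{(1)}$ with $b^{(1)}_n := d_n$.

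The core of the argument is a quasicentral approximate unit diagonalization. For each fixed $n$, the element $d_n$ lies in the C$^*$-subalgebra $J \subset A$, and $J$ is an ideal, hence has an approximate unit that is quasicentral in $A$; in particular it is quasicentral for the finite set $\{b^{(1)}_n, \ldots, b^{(n)}_n\}$. So for each $n$ I can pick a positive contraction $e_n \in J$ with
\begin{align*}
\|e_n d_n - d_n\| < \tfrac{1}{n}, \qquad \|e_n b^{(k)}_n - b^{(k)}_n e_n\| < \tfrac{1}{n} \quad (1 \le k \le n).
\end{align*}
Set $e := \pi_\omega((e_n)_n) \in A_\omega$. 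Since each $e_n \in J$ we get $e \in J_\omega$, and $e$ is a positive contraction. The first family of estimates gives $\lim_{n \to \omega}\|e_n d_n - d_n\| = 0$, i.e. $ed = d$. The second family gives $\lim_{n\to\omega}\|e_n b^{(k)}_n - b^{(k)}_n e_n\| = 0$ for every fixed $k$ (since eventually $n \ge k$), hence $e$ commutes with each $b^{(k)}$, and therefore with all of $B$ by density and the norm-continuity of the commutator. Thus $e \in B' \cap J_\omega$ with $ed = d$, which is exactly what is required.

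The step I expect to be the main (though entirely standard) technical point is the existence of the quasicentral approximate unit: one needs that an arbitrary closed ideal $J$ of an arbitrary C$^*$-algebra $A$ admits an approximate unit that is quasicentral in $A$ (Arveson's theorem), and then that a single positive contraction can be chosen from it achieving all finitely many estimates simultaneously at stage $n$ — this is immediate since the quasicentral approximate unit is increasing and directed, so a sufficiently far-out term works. A minor bookkeeping point is the reduction allowing one to assume the representing sequences of $d$ consist of positive contractions in $J$: this follows because $J_\omega = \pi_\omega(\ell_\infty(J))$ and functional calculus can be applied sequence-wise, with the error absorbed into $c_\omega(J)$. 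No diagonal/reindexing argument across the ultrafilter is needed because the requirement "$n \ge k$ eventually" is automatically satisfied along any set in $\omega$ that is cofinite, and $\omega$ being free contains all cofinite sets.
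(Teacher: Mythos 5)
Your proof is correct. Note that the paper itself gives no proof of this proposition---it is quoted from \cite[Corollary 1.7]{Kir.AbelProc}---but it does prove the closely analogous Proposition~\ref{prop: Obs (7)} ($\pi_\omega(D_{\bm{\rho}})$ is a $\sigma$-ideal of $\pi_\omega(\mathcal{N}(D_{\bm{\rho}}))$) in full, and the comparison is instructive. There the authors work entirely at the level of the sequence algebra: they form the separable C$^*$-algebra $C$ generated by the chosen lifts, take a quasicentral approximate unit of the ideal $C\cap D_{\bm{\rho}}$ \emph{of $C$}, and then invoke the $\varepsilon$-test (Lemma~\ref{lem: epsilon test}) to pass from ``every finite subfamily of conditions is satisfiable up to $\varepsilon$'' to a single sequence satisfying all countably many conditions exactly. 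Your argument instead exploits the special coordinatewise structure of $J_\omega$: since $J$ is an honest closed ideal of $A$ in every coordinate, Arveson's quasicentral approximate unit of $J$ in $A$ is available inside each copy of $A$, and the diagonal choice ``at stage $n$, handle the first $n$ constraints with tolerance $1/n$'' replaces the $\varepsilon$-test; the freeness of $\omega$ (cofinite sets lie in $\omega$) is exactly what makes the diagonal choice suffice. This is more elementary and perfectly adequate here, though it would not carry over to Proposition~\ref{prop: Obs (7)}, where $D_{\bm{\rho}}$ is cut out by a limit condition rather than coordinatewise membership in an ideal. Two small points you should tidy up: the reduction of $d$ to a positive contraction is by rescaling (harmless, as $ed=d$ is scale-invariant) together with the standard fact that positive contractions lift to positive contractions along the surjection $\ell_\infty(J)\to J_\omega$ (your formula ``$(d_n^*d_n)^{1/2}\wedge 1$'' is a slightly garbled way of saying this); and one should record that $J_\omega=\pi_\omega(\ell_\infty(J))$, i.e.\ that elements of the kernel of $(\pi_J)_\omega$ admit representing sequences lying in $J$, which follows by choosing coordinatewise almost-best approximants from $J$.
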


\begin{cor}\cite[Remark 1.15(3)]{Kir.AbelProc}\label{cor: F(A/J) is a quotient of F(A)}
Let $A$ be a separable {\rm{C}}$^*$-algebra, and let $J\triangleleft A$ be a closed ideal of $A$. Then $F(A/J)$ is a quotient of $F(A)$. 
\end{cor}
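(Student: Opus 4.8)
The plan is to realize the surjection $F(A)\twoheadrightarrow F(A/J)$ as the map induced by the ultrapower $q_\omega\colon A_\omega\to (A/J)_\omega$ of the quotient map $q\colon A\to A/J$, exploiting that $J_\omega$ is a $\sigma$-ideal of $A_\omega$.

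First I would record the routine identification $(A/J)_\omega\cong A_\omega/J_\omega$ under which $q_\omega$ becomes the quotient map $\pi_{J_\omega}$. Surjectivity of $q_\omega$ follows by lifting a bounded sequence in $A/J$ to a bounded sequence in $A$ with norm control, using $\|q(a)\|=\mathrm{dist}(a,J)$ and choosing approximate lifts; and the kernel of $q_\omega$ is the set of classes of sequences $(a_n)$ with $\lim_{n\to\omega}\mathrm{dist}(a_n,J)=0$, which is exactly $J_\omega$. Under this identification the constant-sequence copy of $A$ in $A_\omega$ is carried onto the constant-sequence copy of $A/J$ in $(A/J)_\omega$.

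Next, by Proposition~\ref{prop: J_omega is sigmaideal}, $J_\omega$ is a $\sigma$-ideal of $A_\omega$, and since $A$ is separable its image $C\subset A_\omega$ is a separable {\rm{C}}$^*$-subalgebra. Applying Theorem~\ref{thm: pi_I is surjective} to $C$ and $I=J_\omega$ then gives that
$$\pi_{J_\omega}\colon A'\cap A_\omega\longrightarrow \pi_{J_\omega}(A)'\cap (A_\omega/J_\omega)=(A/J)'\cap (A/J)_\omega$$
is surjective. Composing with the quotient map $(A/J)'\cap (A/J)_\omega\to F(A/J)$ yields a surjective $*$-homomorphism $\Phi\colon A'\cap A_\omega\to F(A/J)$.

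Finally I would check that $\Phi$ kills $\Ann(A,A_\omega)$: if $x\in A'\cap A_\omega$ satisfies $xa=ax=0$ for all $a\in A$, then $q_\omega(x)q(a)=q_\omega(xa)=0$ and $q(a)q_\omega(x)=0$ for all $a\in A$, i.e.\ for all elements of $A/J$, so $q_\omega(x)\in\Ann(A/J,(A/J)_\omega)$ and $\Phi(x)=0$. Hence $\Phi$ factors through $F(A)=(A'\cap A_\omega)/\Ann(A,A_\omega)$, giving the desired surjection $F(A)\twoheadrightarrow F(A/J)$. The only non-formal ingredient is Theorem~\ref{thm: pi_I is surjective} — specifically its surjectivity half, which lifts an element of the relative commutant of $\pi_{J_\omega}(A)$ inside $A_\omega/J_\omega$ back to an element of $A'\cap A_\omega$, and this is exactly where the $\sigma$-ideal property of $J_\omega$ does the work; the rest is bookkeeping with the definitions of $A_\omega$, $J_\omega$, and $\Ann(A,A_\omega)$.
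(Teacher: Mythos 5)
Your proposal is correct and follows essentially the same route as the paper's own proof: identify $(A/J)_\omega\cong A_\omega/J_\omega$ with $\ker q_\omega=J_\omega$, invoke Proposition~\ref{prop: J_omega is sigmaideal} and Theorem~\ref{thm: pi_I is surjective} to get surjectivity of $A'\cap A_\omega\to (A/J)'\cap(A/J)_\omega$, and observe that $q_\omega$ carries $\Ann(A,A_\omega)$ into $\Ann(A/J,(A/J)_\omega)$ so the map descends to $F(A)\twoheadrightarrow F(A/J)$. No gaps.
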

\begin{proof}
We include the proof for the reader's convenience. 
Let $\pi_J\colon A\to A/J$ be the quotient map, and $(\pi_J)_{\omega}\colon A_{\omega}\to (A/J)_{\omega}$ be its ultrapower map. 
It is straightforward to see that $(\pi_J)_{\omega}$ is surjective with kernel $J_{\omega}$. 
By the definition of $\text{Ann}(A,A_{\omega})$, it holds that
\begin{equation}
(\pi_J)_\omega ({\rm{Ann}} (A, A_\omega) ) \subseteq {\rm{Ann}} (A/J, (A/J)_\omega).\label{eq: Ann(A,A_{omega}) is contained in the Ann of A/J}
\end{equation}
and $(\pi_J)_\omega (A'\cap A_\omega) \subseteq  (A/J )' \cap (A/J)_\omega\,.$
Moreover, by Proposition \ref{prop: J_omega is sigmaideal}, $J_{\omega}$ is a $\sigma$-ideal. Therefore by Theorem \ref{thm: pi_I is surjective}, $(\pi_J)_{\omega}|_{A'\cap A_{\omega}}\colon A'\cap A_{\omega}\to (A/J)'\cap (A/J)_{\omega}$ is surjective . From this and  (\ref{eq: Ann(A,A_{omega}) is contained in the Ann of A/J}), we see that 
$F(A/J)=(A/J)'\cap (A/J)_{\omega}/{{\rm{Ann}}(A/J,(A/J)_{\omega})}$ is a quotient of $(A/J)'\cap (A/J)_{\omega}/(\pi_J)_{\omega}({\rm{Ann}}(A,A_{\omega}))$, which is a quotient of $F(A)=A'\cap A_{\omega}/\text{Ann}(A,A_{\omega})$. Therefore the claim follows.
\end{proof}

Next, recall that a combination of the results of Glimm \cite{Glimm61}, Mar\'echal \cite{Marechal75} and Elliott-Woods \cite{ElliottWoods76} yields the following theorem. 
\begin{thm}[Glimm, Mar\'echal, Elliott-Woods]\label{prop: generalization of Elliott-Woods}
Let $A$ be a separable non-type {\rm{I}} {\rm{C}}$^*$-algebra, and let $M$ be an injective properly infinite von Neumann algebra with separable predual. Then there exists a *-representation $d\colon A\to M$ such that $d(A)$ is weakly (hence ultra *-strongly) dense in $M$.   
\end{thm}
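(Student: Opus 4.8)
The plan is to prove Theorem \ref{prop: generalization of Elliott-Woods} by assembling three classical ingredients. First, since $A$ is separable and not of type {\rm I}, a theorem of Glimm \cite{Glimm61} gives a C$^*$-subalgebra $A_0\subseteq A$ and a closed ideal of $A_0$ whose quotient is isomorphic to the CAR algebra (the UHF algebra $M_{2^\infty}$); equivalently, $A$ admits an irreducible representation $\pi$ on a separable Hilbert space whose image is not of type {\rm I}, and one can arrange that $\pi(A)''$ contains (a copy of, and in fact is acted on by) a suitable copy of $M_{2^\infty}$. More precisely, Glimm's theorem produces a factor representation of non-type {\rm I} kind; combining with the structure of injective von Neumann algebras one knows that every properly infinite injective von Neumann algebra with separable predual is a tensor factor of, or can be built from, the hyperfinite factors, so it suffices to produce \emph{some} non-type {\rm I} injective von Neumann algebra containing a weakly dense copy of (a quotient of a subalgebra of) $A$ and then manipulate it to be the prescribed $M$.

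Concretely, the key steps in order are: (1) Invoke Glimm to obtain a representation $\pi\colon A\to \cL(H)$ on a separable $H$ with $\pi(A)''$ not of type {\rm I}; by cutting down and using that $\pi(A)''$ has a non-type {\rm I} quotient, extract from within $\pi(A)''$ a copy of the hyperfinite {\rm II}$_1$ factor $\cR$ together with an asymptotically central sequence, i.e. realize $A$ (or a subquotient) mapping weakly densely into an injective von Neumann algebra $N$ that has $\cR$ as a subalgebra. (2) Apply the Elliott--Woods classification \cite{ElliottWoods76} of approximately finite-dimensional (AF) von Neumann algebras, together with Mar\'echal's result \cite{Marechal75} on the possible ultraweak completions of separable C$^*$-algebras, to identify precisely which injective von Neumann algebras arise as $\pi(A)''$ for some representation $\pi$ of a given non-type {\rm I} separable $A$: the upshot should be that \emph{every} injective von Neumann algebra with separable predual of the appropriate type arises. (3) Given the target properly infinite injective $M$ with separable predual, use the fact that $M$ is a direct summand/tensor amplification of a hyperfinite factor ($M\cong M\otimes B(\ell_2)$ since $M$ is properly infinite, and the hyperfinite pieces are classified by Connes plus Elliott--Woods) to descend from the abstract ``some injective von Neumann algebra'' to the specific $M$, producing the desired weakly dense $*$-representation $d\colon A\to M$; note $d$ need not be injective, which gives us the needed flexibility.

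The main obstacle I expect is step (2)--(3): matching the \emph{exact} von Neumann algebra $M$ rather than merely producing some non-type {\rm I} injective completion. Glimm's theorem only guarantees a non-type {\rm I} quotient (morally the CAR algebra appears as a subquotient), so one must inflate the GNS construction: take the non-type {\rm I} quotient $B$ of a subalgebra of $A$, a faithful state on $B$ whose GNS von Neumann algebra is a given hyperfinite factor (possible by Elliott--Woods/Mar\'echal, since the CAR algebra has states with GNS completion any injective factor of any type -- the Powers and Araki--Woods factors), then amplify by tensoring with $B(\ell_2)$ to reach a properly infinite algebra, and finally absorb into $M$. Here one leans on $M$ being both injective (= hyperfinite, by Connes) and properly infinite with separable predual, hence an amplification of a unique-up-to-iso hyperfinite factor of each type, so that a suitable representation of the CAR algebra has weak closure exactly the hyperfinite factor part of $M$; the separable-predual and weak-density bookkeeping is routine once the factor case is in hand. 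The phrase ``hence ultra $*$-strongly dense'' in the statement is immediate from weak density by Kaplansky, so no extra work is needed there.
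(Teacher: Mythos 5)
Your list of ingredients (Glimm, Mar\'echal, Elliott--Woods) matches the paper's, but the way you assemble them leaves the essential step unproved, and your step (3) as written does not work. The paper's proof is a direct concatenation of two precise statements: (a) Elliott--Woods shows that every properly infinite hyperfinite von Neumann algebra $M$ with separable predual contains a \emph{weakly dense} copy of the CAR algebra $M_{2^{\infty}}$; (b) Mar\'echal's Proposition 2 states that for any separable non-type I $A$ and any properly infinite von Neumann algebra $M$ on a separable Hilbert space admitting a weakly dense copy of $M_{2^{\infty}}$, there is a representation $\rho\colon A\to \cL(H)$ with $\rho(A)''=M$. All the hard work --- passing from Glimm's subquotient $B/J\cong M_{2^{\infty}}$ of $A$ to a representation of $A$ \emph{itself} whose weak closure is exactly the prescribed $M$ --- is contained in Mar\'echal's theorem, which you never invoke in this form; instead you try to rebuild it, and that is where the gaps appear.

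Concretely: first, a representation of a quotient $B$ of a subalgebra of $A$ does not automatically yield a representation of $A$ with controlled weak closure. One must pull the state back from $B$, extend it to $A$, and then control the weak closure of the resulting GNS representation; this is precisely the content of Glimm's and Mar\'echal's constructions and cannot be treated as routine bookkeeping. Second, your amplification step fails outright: if $d_0(A)''=N$ and $M\cong N\,\overline{\otimes}\,\cL(\ell_2)$, then $(d_0\otimes 1)(A)''=N\otimes \C 1\neq M$, so ``tensoring with $B(\ell_2)$ and absorbing into $M$'' destroys weak density rather than preserving it. The repair is to cite Mar\'echal's result in the form the paper uses it, with Elliott--Woods supplying the hypothesis that the given injective properly infinite $M$ contains a weakly dense $M_{2^{\infty}}$; no amplification is then needed. (Your closing remark that weak density upgrades to ultra-$*$-strong density via Kaplansky is correct.)
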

\begin{proof}
Glimm \cite{Glimm61} has shown that if $A$ is a separable non-type I C$^*$-algebra, then for each Powers factor $R_{\lambda}\ (0<\lambda<1)$ there exists a *-homomorphism $\pi\colon A\to R_{\lambda}$ with $\pi(A)''=R_{\lambda}$. 

Based on Glimm's work, Mar\'echal \cite[Proposition 2]{Marechal75} has extended this result to the following: 
let $A$ be a separable, non-type I ${\rm{C}}^*$-algebra and let $M$ be a properly infinite von Neumann algebra acting on a separable Hilbert space $H$ for which there exists a *-homomorphism $\pi\colon M_{2^{\infty}}\to \cL(H)$ satisfying $\pi(M_{2^{\infty}})''=M$. Then there exists a *-homomorphism $\rho\colon A\to \cL(H)$ such that $\rho(A)''=M$.

By Elliott-Woods Theorem \cite{ElliottWoods76}, any properly infinite hyperfinite von Neumann algebra $M$ with separable predual contains a weakly dense copy of the CAR algebra $M_{2^{\infty}}$. The combination of these results finishes the proof.
\end{proof}
Now we can reduce the proof of Theorem \ref{thm:main} to the following stronger result: 
\begin{thm}\label{thm: main2}
Let $M$ be a von Neumann algebra with separable predual, and let $A$ be a (not necessarily unital) separable {\rm{C}}$^*$-subalgebra of $M$ which is weakly dense in $M$. Then there exists a {\rm{C}}$^*$-subalgebra $B$ of $F(A)$, and a closed ideal $J$ of $B$ such that $B/J\cong M'\cap M^{\omega}$, where $M^{\omega}$ is the Ocneanu (or equivalently, {\rm{C}}$^*$-to-{\rm{W}}$^*$) ultrapower of $M$. 
\end{thm}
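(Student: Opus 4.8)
The plan is to realise the Ocneanu ultrapower $M^\omega$ as the quotient $\mathcal{N}(D_{\bm{\rho}})/D_{\bm{\rho}}$ of a suitable piece of $\ell_\infty(A)$, to sit that piece inside $A_\omega$ in a way compatible with both $A'\cap A_\omega$ and $\mathrm{Ann}(A,A_\omega)$, and then to run the argument behind Theorem~\ref{thm: SKR theorem} in this generality via $\sigma$-ideals. First I would fix a faithful normal state $\mu$ on $M$ (which exists since $M$ has separable predual, hence is $\sigma$-finite) and put $\rho:=\mu|_A$; this is a faithful state on $A$ because $A$ is weakly dense in $M$ and $\mu$ is faithful. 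Working inside $\ell_\infty(A)$ with the constant sequence $\bm{\rho}=(\rho,\rho,\dots)$ I form $D_{\bm{\rho}}$ and its normalizer $\mathcal{N}(D_{\bm{\rho}})$; by Proposition~\ref{prop: connection between C* to W* and Ocneanu} together with the identifications recalled in Section~\ref{sec: Ocneanu UP} one has $M^\omega=(A,\rho)_\omega=\mathcal{N}(D_{\bm{\rho}})/D_{\bm{\rho}}$, the canonical (normal, unital) copy of $M$ in $M^\omega$ containing the image of $a\mapsto[\Delta(a)]$, $a\in A$. Since $c_\omega(A)\subseteq D_{\bm{\rho}}$ (because $\|\cdot\|_\rho^\sharp\le\sqrt2\,\|\cdot\|$), the quotient $\mathcal{N}_\omega:=\mathcal{N}(D_{\bm{\rho}})/c_\omega(A)$ is a C$^*$-subalgebra of $A_\omega$ with closed ideal $D_\omega:=D_{\bm{\rho}}/c_\omega(A)$ and $\mathcal{N}_\omega/D_\omega=M^\omega$, and the diagonal copy of $A$ lies in $\mathcal{N}_\omega$.

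The subalgebra I propose is $B:=q(A'\cap\mathcal{N}_\omega)\subseteq F(A)$, where $q\colon A'\cap A_\omega\to F(A)$ is the quotient map, and $J$ will be the image of $A'\cap D_\omega$. Two routine points must be established first. (a) $\mathrm{Ann}(A,A_\omega)\subseteq D_\omega$: a contractive $\omega$-approximately annihilating sequence $(a_n)$ satisfies $\|a_nb\|+\|ba_n\|\to_\omega0$ for all $b\in A$, hence $a_n\to_\omega0$ $\sigma$-strongly$^*$ in the GNS representation of $\mu$ (density of $A\xi_\mu$ plus uniform boundedness), so $(\|a_n\|_\rho^\sharp)^2=\mu(a_n^*a_n)+\mu(a_na_n^*)\to_\omega0$. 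Consequently $\mathrm{Ann}(A,A_\omega)\subseteq A'\cap D_\omega\subseteq A'\cap\mathcal{N}_\omega$, so $\ker(q|_{A'\cap\mathcal{N}_\omega})=\mathrm{Ann}(A,A_\omega)$ and $B\cong(A'\cap\mathcal{N}_\omega)/\mathrm{Ann}(A,A_\omega)$. (b) The composite $\phi\colon A'\cap\mathcal{N}_\omega\hookrightarrow\mathcal{N}_\omega\twoheadrightarrow M^\omega$ has image in $A'\cap M^\omega$, since operator-norm $\omega$-centrality of a representative $(x_n)$ forces $[x,\Delta(b)]\in D_{\bm{\rho}}$ for all $b\in A$; moreover $A'\cap M^\omega=M'\cap M^\omega$ because $A$ is weakly dense in $M$, $M\hookrightarrow M^\omega$ is normal and $M^\omega$ is a von Neumann algebra. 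As $\mathrm{Ann}(A,A_\omega)\subseteq A'\cap D_\omega=\ker\phi$, the map $\phi$ descends to a $*$-homomorphism $\bar\phi\colon B\to M'\cap M^\omega$ with $\ker\bar\phi=(A'\cap D_\omega)/\mathrm{Ann}(A,A_\omega)=:J$.

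It then remains to prove that $\bar\phi$ is surjective, and here I would imitate Theorem~\ref{thm: SKR theorem}: show that $D_\omega$ is a $\sigma$-ideal of $\mathcal{N}_\omega$ — the W$^*$-flavoured analogue of Proposition~\ref{prop: J_omega is sigmaideal} — and then apply Theorem~\ref{thm: pi_I is surjective} with ambient algebra $\mathcal{N}_\omega$, ideal $D_\omega$, and separable subalgebra $C$ the diagonal copy of $A$: the sequence $0\to A'\cap D_\omega\to A'\cap\mathcal{N}_\omega\xrightarrow{\phi}A'\cap M^\omega\to0$ is exact, so $\phi$ maps onto $A'\cap M^\omega=M'\cap M^\omega$ and hence $B/J\cong M'\cap M^\omega$, as required. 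The hard part will be the $\sigma$-ideal property of $D_\omega$: given a separable subalgebra of $\mathcal{N}_\omega$, lifted to sequences $(b^{(k)})\in\mathcal{N}(D_{\bm{\rho}})$, and $d\in(D_\omega)_+$, lifted to $(d_n)\in(D_{\bm{\rho}})_+$, one must produce a positive contraction $(e_n)$ with $(e_n)\in D_{\bm{\rho}}$, $\|e_nd_n-d_n\|\to_\omega0$ and $\|[e_n,b^{(k)}_n]\|\to_\omega0$ in operator norm — the obstacle being that $\omega$-central elements of $M^\omega$ need not have operator-norm asymptotically central lifts in $A$ in any naive way. I expect this to be handled by a reindexing argument that builds $e_n$, for each $n$, from a quasi-central approximate unit of the ideal generated by $d_n$ inside $C^*(d_n,b^{(1)}_n,\dots,b^{(m)}_n)$, arranged so that $\|e_n\|_\rho^\sharp$ stays controlled by $\|d_n\|_\rho^\sharp\to_\omega0$; such $\sigma$-properties of the (generalised) Ocneanu ultraproduct are available from \cite{AndoHaagerup} and the techniques of \cite{Kir.AbelProc,KirRor.Crelle2013}, and this is the step that genuinely exploits the non-tracial generality.
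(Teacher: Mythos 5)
Your proposal is correct and follows essentially the same route as the paper: it fixes a normal faithful state, identifies $M^\omega$ with $\mathcal{N}(D_{\bm{\rho}})/D_{\bm{\rho}}$, passes to $\pi_\omega(\mathcal{N}(D_{\bm{\rho}}))\subset A_\omega$, checks $\mathrm{Ann}(A,A_\omega)\subseteq\pi_\omega(D_{\bm{\rho}})$, proves that $\pi_\omega(D_{\bm{\rho}})$ is a $\sigma$-ideal of $\pi_\omega(\mathcal{N}(D_{\bm{\rho}}))$ via quasi-central approximate units and the $\varepsilon$-test, and applies Theorem~\ref{thm: pi_I is surjective} together with weak density to land onto $M'\cap M^\omega$; your $B=q(A'\cap\mathcal{N}_\omega)$ and $J$ are exactly the paper's $C/\mathrm{Ann}(A,A_\omega)$ and $\ker\overline{\Psi}$ from Proposition~\ref{prop: Obs (9)-1}. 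The one step you flag as uncertain (the $\sigma$-ideal property) is handled in the paper precisely as you guess, except that the quasi-central approximate unit is taken for the ideal $C\cap D_{\bm{\rho}}$ of a separable subalgebra $C\subset\ell_\infty(A)$ at the level of whole sequences, with the $\varepsilon$-test then extracting the required single sequence.
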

The proof of the above theorem will be given in the next section. 
Now the main theorem is proved as follows: 
\begin{proof}[Proof of Theorem \ref{thm:main}]
Let $M$ be a properly infinite injective von Neumann algebra with separable predual. 
By Theorem \ref{prop: generalization of Elliott-Woods}, there exists a *-representation $d\colon A\to M$ such that $d(A)''=M$.  
Then apply Theorem \ref{thm: main2} to $d(A)\subset M$ to get that $F(d(A))$ contains isomorphic copies of the Ocneanu central sequence algebras $M'\cap M^{\omega}$ as a sub-quotient. If in particular we choose $M$ to be the Powers factor $R_{\lambda}$ of type III$_{\lambda}\ (0<\lambda<1)$, we get that $F(d(A))$ contains a type III$_{\lambda}$ factor $R_{\lambda}'\cap R_{\lambda}^{\omega}$ (see \cite[Example 5.1]{AndoHaagerup}) with non-separable predual as a sub-quotient. By Corollary \ref{cor: F(A/J) is a quotient of F(A)}, $F(d(A))$ is a quotient of $F(A)$. This shows that $F(A)$ is non-commutative and non-separable.
\end{proof}
\begin{cor}
Let $A$ be a unital simple separable {\rm{C}}$^*$-algebra that is not of type {\rm{I}}. Then for each injective type {\rm{III}} factor $M$ with separable predual, the Ocneanu central sequence algebra $M'\cap M^{\omega}$arises as a sub-quotient of $A'\cap A_{\omega}$. 
\end{cor}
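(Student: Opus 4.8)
The plan is to deduce the Corollary directly from Theorem~\ref{thm: main2} and Theorem~\ref{prop: generalization of Elliott-Woods}, the point being that the hypotheses of the Corollary (unital, simple, not of type~{\rm I}) are precisely what is needed so that no passage to a proper quotient $F(A)/I$ is required, and so that $A'\cap A_\omega$ itself may be used in place of $F(A)$.

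First I would check that $M$ satisfies the hypotheses of Theorem~\ref{prop: generalization of Elliott-Woods}. A type~{\rm III} factor is automatically properly infinite (its unit is an infinite projection---indeed every nonzero projection in a type~{\rm III} factor is equivalent to $1$), it is injective by assumption, and it has separable predual. So Theorem~\ref{prop: generalization of Elliott-Woods} applied to the pair $(A,M)$ produces a $*$-representation $d\colon A\to M$ with $d(A)$ weakly dense in $M$. Since $A$ is unital, $q:=d(1_A)$ is a projection with $d(A)\subseteq qMq$, and weak density forces $q=1_M$; since $A$ is simple and $M\neq 0$, the representation $d$ is moreover faithful. Hence $A_0:=d(A)$ is a weakly dense \emph{unital} C$^*$-subalgebra of $M$, $*$-isomorphic to $A$ via $d$.

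Next I would apply Theorem~\ref{thm: main2} to $A_0\subseteq M$ to obtain a C$^*$-subalgebra $B\subseteq F(A_0)$ and a closed ideal $J\triangleleft B$ with $B/J\cong M'\cap M^\omega$. Since $A_0$ is unital, $\Ann(A_0,(A_0)_\omega)=\{0\}$, so $F(A_0)=A_0'\cap (A_0)_\omega$, and likewise $F(A)=A'\cap A_\omega$. The $*$-isomorphism $d$ induces a $*$-isomorphism $A_\omega\cong (A_0)_\omega$ which restricts to $A'\cap A_\omega\cong A_0'\cap (A_0)_\omega$; transporting $B$ and $J$ back along it exhibits $M'\cap M^\omega$ as a sub-quotient of $A'\cap A_\omega$. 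In particular, taking $M=R_\lambda$, the Powers factor of type ${\rm III}_\lambda$, yields for each $\lambda\in(0,1)$ a type ${\rm III}_\lambda$ sub-quotient of $A'\cap A_\omega$, and these are pairwise non-isomorphic.

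There is no genuinely new difficulty here: the Corollary is a repackaging of Theorem~\ref{thm: main2}, Theorem~\ref{prop: generalization of Elliott-Woods}, and the elementary fact that $F(A)=A'\cap A_\omega$ for unital $A$. The two points that call for a moment's care are (i) checking that $d$ may, and indeed must, be taken unital and faithful once $A$ is unital and simple---this is exactly what makes the auxiliary ideal $I$ appearing in Theorem~\ref{thm: maindesu} unnecessary here; and (ii) noting that an injective type~{\rm III} factor with separable predual genuinely feeds into Theorem~\ref{prop: generalization of Elliott-Woods}, which is where the hyperfiniteness input (Elliott--Woods, Connes) has already been absorbed.
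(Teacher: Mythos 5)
Your proof is correct and follows essentially the intended route: the paper leaves the Corollary without a separate proof, but the remark after Theorem \ref{thm: maindesu} ("if $A$ is moreover simple, then we may choose $I=\{0\}$") together with the proof of Theorem \ref{thm:main} amounts to exactly your argument --- simplicity makes $d$ faithful so no quotient of $F(A)$ is needed, and unitality gives $\Ann(A,A_\omega)=\{0\}$, hence $F(A)=A'\cap A_\omega$. Your two explicit verifications (that $d$ is automatically unital by weak density, and that an injective type {\rm III} factor is properly infinite so Theorem \ref{prop: generalization of Elliott-Woods} applies) are precisely the points the paper takes for granted.
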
  
\begin{rem}
Since $M'\cap M^{\omega}$ is not of type III if $M$ is a (hyperfinite) type III$_0$ factor (see \cite[Theorem 6.18]{AndoHaagerup}), we do not know whether a type III$_0$ factor arises as a sub-quotient of $F(A)$. 
\end{rem}
\subsection{$\sigma$-ideals and embedding of $\Delta(A)$ into the normalizer of $D_{\rho}$}
We continue to keep the notation from $\S$\ref{sec: Ocneanu UP}. 
Thus we let $\bm{A}=(A_1,A_2,\dots)$ and $\bm{\rho}=(\rho_1,\rho_2,\dots)$ be a sequence of C$^*$-algebras and states. We define $L_{\bm{\rho}}$, $D_{\bm{\rho}}=L_{\bm{\rho}}\cap L_{\bm{\rho}}^*$  and $\mathcal{N}(D_{\bm{\rho}})$ as before and $\pi_{\omega}$ denotes the quotient map $\ell_{\infty}(\bm{A})\to \ell_{\infty}(\bm{A})/c_{\omega}(\bm{A})$. 
Our strategy is to find an appropriate $\sigma$-ideal which would allow us to map a certain central sequence-like subalgebra of $F(A)$ onto the W$^*$-central sequence algebra. A natural candidate might be $D_{\bm{\rho}}\triangleleft \mathcal{N}(D_{\bm{\rho}})$. However, it is not clear whether $D_{\bm{\rho}}$ is actually a $\sigma$-ideal of $\cN (D_{\bm{\rho}})$. However, the problem can be resolved by passing to the quotient by $c_{\omega}(\bm{A})$: 
\begin{prop}\label{prop: Obs (7)}
$\pi_\omega(D_{\bm{\rho}})$ is a $\sigma$-ideal of $\pi_\omega(\cN (D_{\bm{\rho}}))$.
\end{prop}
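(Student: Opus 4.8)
The plan is to verify the defining property of a $\sigma$-ideal directly. Write $N := \pi_\omega(\cN(D_{\bm\rho}))$ and $I := \pi_\omega(D_{\bm\rho})$, which is indeed a closed ideal of $N$ since $D_{\bm\rho}$ is a closed ideal of $\cN(D_{\bm\rho})$ (and since quotients of ideals by an ideal contained in both remain ideals). Let $B \subset N$ be a separable C$^*$-subalgebra and let $d \in I_+$ be given; I must produce a positive contraction $e \in B' \cap I$ with $ed = d$. Lift $B$ to a separable C$^*$-subalgebra $\tilde B \subset \cN(D_{\bm\rho})$ (choosing a countable dense set of preimages and taking the C$^*$-algebra they generate, which still lies in $\cN(D_{\bm\rho})$ because $\cN(D_{\bm\rho})$ is norm-closed), and lift $d$ to a positive element $\tilde d = (\tilde d_1, \tilde d_2, \dots) \in D_{\bm\rho}$ with $\|\tilde d\| \le 1$. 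The goal is to build, component-wise in $n$, elements $e_n \in M_n$ (or $A_n^{**}$, in the general setting) that approximately commute with the $n$-th components of a countable dense subset of $\tilde B$, approximately absorb $\tilde d_n$, and still have small $\|\cdot\|_{\rho_n}^\sharp$; then $e := \pi_\omega((e_n)_n)$ will do the job.

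The key step is the component-wise construction, which is a standard quasi-central–approximate-unit argument adapted to the $\sharp$-seminorm. Fix an increasing sequence of finite subsets $G_1 \subset G_2 \subset \cdots$ whose union is dense in $\tilde B$, and fix a countable dense sequence in the self-adjoint part of $\cN(D_{\bm\rho})$ detecting membership in $D_{\bm\rho}$. For each $n$, the element $\tilde d_n$ is small in $\|\cdot\|_{\rho_n}^\sharp$; I want $e_n$ with $\|e_n\|_{\rho_n}^\sharp$ small, $\|e_n \tilde d_n - \tilde d_n\|$ small, and $\|[e_n, b_n]\|$ small for $b \in G_n$. Because $D_{\bm\rho}$ is hereditary in $\ell_\infty(\bm M)$, one can use functional calculus on $\tilde d_n$ (e.g. $e_n = f(\tilde d_n)$ for suitable $f$ vanishing near $0$) together with a Stinespring/convexity averaging as in the proof that hereditary subalgebras have quasi-central approximate units, to arrange approximate commutation with the finitely many $b_n$ while keeping $e_n$ supported (in the $\sharp$-sense) near the support of $\tilde d_n$, hence $\|\cdot\|_{\rho_n}^\sharp$-small along $\omega$. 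Passing to the ultralimit, the errors vanish, so $e \in B' \cap I$ and $ed = d$.

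The main obstacle I anticipate is precisely that $D_{\bm\rho}$ need not be a $\sigma$-ideal of $\cN(D_{\bm\rho})$ itself — this is exactly the difficulty the proposition is designed to circumvent — so the construction must genuinely exploit the quotient by $c_\omega(\bm A)$: the approximate-commutation and approximate-absorption conditions only need to hold \emph{in the limit along} $\omega$, not uniformly, which is what makes the component-wise functional-calculus construction succeed where a naive argument in $\cN(D_{\bm\rho})$ would stall. A secondary technical point is checking that $e_n = f(\tilde d_n)$ genuinely has small $\sharp$-norm: here one uses that $\|f(\tilde d_n)\|_{\rho_n}^\sharp$ is controlled by $\|\tilde d_n\|_{\rho_n}^\sharp$ when $f$ is Lipschitz with $f(0)=0$ and $\|f\|_\infty \le 1$, via the inequality $\|f(x)\|_\rho \le C\|x\|_\rho^{1/2}$-type estimates valid for normal states (or, more simply, dominating $f(t) \le t^{1/2} g(t)$ with $g$ bounded). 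These are routine once set up, and none of them obstructs the argument; the conceptual content is entirely in recognizing that passing to $\pi_\omega$ relaxes the requirements to ultralimit conditions.
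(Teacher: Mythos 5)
There is a genuine gap in the central step of your argument, namely where the approximate commutation with $B$ is supposed to come from. You propose to take $e_n=f(\tilde d_n)$ by functional calculus on the lift $\tilde d$ of $d$ and then to "average" as in the existence proof of quasi-central approximate units. But the elements $f(\tilde d)$ live in the hereditary subalgebra of $C^*(\tilde B\cup\{\tilde d\})$ generated by $\tilde d$, and that subalgebra is \emph{not} a two-sided ideal; the Arveson/Akemann--Pedersen averaging trick, which rests on the fact that $u_\lambda c-cu_\lambda\to 0$ weakly for an approximate unit $(u_\lambda)$ of a \emph{two-sided ideal}, simply does not apply to it (a hereditary subalgebra need not admit any approximate unit quasi-central for the ambient algebra --- consider $\mathbb{C}e_{11}\subset M_2$). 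So nothing in your construction forces $\|[e_n,b_n]\|$ to be small along $\omega$ for $b\in\tilde B$, and the "component-wise" framing compounds the problem: $D_{\bm{\rho}}$ and quasi-centrality are properties of whole sequences in $\ell_\infty(\bm{A})$ defined by ultralimit conditions, and there is no meaningful ideal of the $n$-th fibre to work in.

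The missing idea, which is exactly what the paper uses, is to exploit the normalizer hypothesis: since $C:=C^*(S\cup\{y\})\subset \cN(D_{\bm{\rho}})$ (with $S$ a countable lift of a dense subset of $B$ and $y$ a positive contractive lift of $d$), the intersection $I:=C\cap D_{\bm{\rho}}$ is a closed \emph{two-sided ideal} of the separable algebra $C$ containing $y$. One then takes an approximate unit of $I$ that is quasi-central for $C$ (legitimate now, because $I$ is an ideal); its elements automatically lie in $D_{\bm{\rho}}$, so no Lipschitz estimate on $\sharp$-norms is needed, they absorb $y$ since $y\in I$, and they approximately commute with the $s_k$. Finally the $\varepsilon$-test (Lemma~\ref{lem: epsilon test}) converts "for every $\varepsilon$ and finitely many conditions there is such a sequence" into a single sequence $e\in D_{\bm{\rho}}$ with $ey-y\in c_\omega(\bm{A})$ and $[e,s_k]\in c_\omega(\bm{A})$ for all $k$, which is where the passage to $\pi_\omega$ is genuinely used. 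Your instinct that the quotient by $c_\omega(\bm{A})$ is what relaxes the requirements to ultralimit conditions is correct, but the diagonalization device realizing it, and the identification of the correct ideal to take the quasi-central approximate unit in, are both absent from the proposal.
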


For the proof, we use the next lemma (see \cite[Lemma A.1]{Kir.AbelProc} or \cite[Lemma 3.1]{KirRor.Crelle2013} for the proof):
\begin{lemma}[The $\varepsilon$-test]\label{lem: epsilon test}
Let $\omega$ be a free ultrafilter on $\mathbb{N}$. Let $X_1,X_2,\dots$ be any sequence of sets. Suppose that for each $k\in \mathbb{N}$, we are given a sequence $(f_n^{(k)})_{n=1}^{\infty}$ of functions $f_n^{(k)}\colon X_n\to [0,\infty)$. For each $k\in \mathbb{N}$, define a new function $f_{\omega}^{(k)}\colon \prod_{n=1}^{\infty}X_n\to [0,\infty]$ by 
\[f_{\omega}^{(k)}(s_1,s_2,\dots)=\lim_{n\to \omega}f_n^{(k)}(s_n),\ \ \ (s_n)_{n=1}^{\infty}\in \prod_{n=1}^{\infty}X_n.\]
Suppose that for each $m\in \mathbb{N}$ and each $\varepsilon>0$, there exists a sequence $s=(s_1,s_2,\dots)\in \prod_{n=1}^{\infty}X_n$ such that 
\[f_{\omega}^{(k)}(s)<\varepsilon\text{\ \ \ \ for\ }k=1,2,\dots,m.\]
Then there exists a sequence $t=(t_1,t_2,\dots) \in \prod_{n=1}^{\infty}X_n$ with 
\[f_{\omega}^{(k)}(t)=0,\ \ \ \text{for all\ }k\in \mathbb{N}.\]
\end{lemma}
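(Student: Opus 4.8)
The plan is to prove the statement by a diagonalisation argument along the ultrafilter $\omega$, converting the given family of $\varepsilon$-approximate solutions into a single exact solution $t$. First I would produce, for each $m\in\mathbb{N}$, a candidate sequence by applying the hypothesis to the first $m$ functions with tolerance $\varepsilon=1/m$: this yields $s^{(m)}=(s^{(m)}_1,s^{(m)}_2,\dots)\in\prod_n X_n$ with $f_\omega^{(k)}(s^{(m)})<1/m$ for $k=1,\dots,m$. The next step is to convert these $\omega$-limit estimates into membership in $\omega$. Since $\lim_{n\to\omega}f_n^{(k)}(s^{(m)}_n)<1/m$, the set $\{n:f_n^{(k)}(s^{(m)}_n)<1/m\}$ lies in $\omega$, and intersecting over the finitely many $k\le m$ (a filter is closed under finite intersections) gives
\[
Y_m:=\{n\in\mathbb{N}:f_n^{(k)}(s^{(m)}_n)<1/m\ \text{for all }k\le m\}\in\omega.
\]

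The heart of the argument is to splice the countably many sequences $s^{(m)}$ into a single sequence $t$. This is the step I expect to be the main obstacle: the naive diagonal $t_n=s^{(n)}_n$ fails, because each $Y_m$ is merely $\omega$-large rather than cofinite, so a fixed coordinate $n$ has no reason to sit in the ``correct'' $Y_m$. To resolve this I would pass to the decreasing chain $V_m:=Y_1\cap\dots\cap Y_m\in\omega$, define a level function $\sigma(n):=\max\{m\le n:n\in V_m\}$ (with the convention $\sigma(n):=1$ when $n\notin V_1$), and set $t_n:=s^{(\sigma(n))}_n$. The point of this construction is that $\sigma$ tends to infinity along $\omega$: for a fixed $M$, every $n\in V_M$ with $n\ge M$ satisfies $\sigma(n)\ge M$, and $V_M\setminus\{1,\dots,M-1\}$ still belongs to $\omega$ precisely because $\omega$ is free and therefore contains no finite set. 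Hence $\{n:\sigma(n)\ge M\}\in\omega$ for every $M$; this is exactly where freeness of $\omega$ enters.

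Finally I would verify $f_\omega^{(k)}(t)=0$ for each fixed $k$. Given $\varepsilon>0$, I choose $M\ge k$ with $1/M<\varepsilon$. For any $n$ with $\sigma(n)=m\ge M$ one has $n\in V_m\subseteq Y_m$, so the defining property of $Y_m$ (together with $k\le m$) gives $f_n^{(k)}(t_n)=f_n^{(k)}(s^{(m)}_n)<1/m\le 1/M<\varepsilon$. Therefore $\{n:\sigma(n)\ge M\}\subseteq\{n:f_n^{(k)}(t_n)<\varepsilon\}$, and the left-hand set lies in $\omega$ by the previous paragraph, so the right-hand set does too. Consequently $f_\omega^{(k)}(t)=\lim_{n\to\omega}f_n^{(k)}(t_n)\le\varepsilon$ for every $\varepsilon>0$, that is $f_\omega^{(k)}(t)=0$. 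Apart from the gluing/level-function construction and the appeal to freeness, the remaining steps are routine manipulations of $\omega$-limits, so I do not anticipate further difficulties.
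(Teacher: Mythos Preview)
Your argument is correct: the diagonalisation via the level function $\sigma(n)=\max\{m\le n:n\in V_m\}$ is exactly the standard way to prove the $\varepsilon$-test, and your use of freeness of $\omega$ to ensure $\{n:\sigma(n)\ge M\}\in\omega$ is the right place for that hypothesis. The only cosmetic point is the edge case $n\notin V_1$: with your convention $\sigma(n)=1$ the inclusion $\{n:\sigma(n)\ge M\}\subseteq\{n:f_n^{(k)}(t_n)<\varepsilon\}$ could fail for $M=1$, but since you may always take $M\ge 2$ this is harmless.

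Note that the paper itself does \emph{not} prove this lemma; it simply refers the reader to \cite[Lemma~A.1]{Kir.AbelProc} and \cite[Lemma~3.1]{KirRor.Crelle2013}. Your proof is essentially the argument found in those references, so there is nothing to compare beyond saying that you have supplied what the paper omits.
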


\begin{proof}[Proof of Proposition \ref{prop: Obs (7)}]
Let $B\subset \pi_{\omega}(\mathcal{N}(D_{\bm{\rho}}))$ be a separable ${\rm C}^\ast$-subalgebra. 
Then there is a countable subset $S=\{s_n\}_{n=1}^{\infty}$ of $\mathcal{N}(D_{\bm{\rho}})$, where 
$s_n=(s_1^{(n)},s_2^{(n)},\dots)\in \ell_{\infty}(\bm{A})$, such that $\pi_{\omega}(S)$ is dense in $B$. Let $d$ be a positive contraction in $\pi_{\omega}(D_{\bm{\rho}})$, and let $y=(y_1,y_2,\dots)\in D_{\bm{\rho}}$ be a sequence of positive contractions satisfying $\pi_{\omega}(y)=d$.  

Using Lemma \ref{lem: epsilon test}, we are going to construct a sequence $e=(e_1,e_2,\ldots) \in D_{\bm{\rho}}$ of positive contractions  with $y-ey\in c_\omega(\bm{A})$  and
$s_ne-es_n\in c_\omega(\bm{A})$ for all  $n\in \N$. Then $\tilde{e}=\pi_{\omega}(e)\in \pi_{\omega}(D_{\bm{\rho}})$ would be the required positive contraction in Definition \ref{def: sigma ideal} of a $\sigma$-ideal.  

We define sets $X_n$ and functions $f_n^{(k)}\colon X_n\to [0,\infty)$ by 
$X_n:= (A_n)_+^{\leq 1}$, $f_n^{(1)}(x_n):= \rho(x_n^*x_n+x_nx_n^*), f_n^{(2)}(x_n):=\| y_n - x_ny_n\|$
and $f_n^{(k+2)}(x_n):= \| [x_n, s^{(k)}_n] \|$  for
$k,n \in \N$. 

For $(x_1,x_2,\ldots)$ with $x_n\in X_n$,  we define
$$f^{(k)}_\omega(x_1,x_2, \ldots):= \lim_{n\to \omega} f_n^{(k)}(x_n).$$

Consider the separable \cst-algebra $C:= C^*(S\cup \{ y\})\subset \cN (D_{\bm{\rho}})$ 
and let $I:=  C\cap D_{\bm{\rho}}$.  $I$ is a closed ideal of the separable C$^{\ast}$-algebra $C$ that contains $y$.

Let $\{e^{(p)}=(e_1^{(p)},e_2^{(p)},\dots)\}_{p=1}^{\infty}$ be an approximate unit of $I$ consisting of positive contractions which is quasi-central for $C$. 

Then for given $m\in \N$ and $\varepsilon>0$, 
we find $p\in \N$ such that 
$\| [e^{(p)}, s_n] \| <\varepsilon$ for $1\le n\leq m$
and  $\| y -e^{(p)} y\|<\varepsilon$. Since also $e^{(p)}\in D_{\bm{\rho}}$,
we get that the sequence $(x_1,x_2, \ldots), x_n=e^{(p)}_n$ satisfies the $\varepsilon$-test  $f^{(k)}_\omega(x_1,x_2,\ldots)<\varepsilon,\ \ k=1,\dots, m+2$. Lemma \ref{lem: epsilon test} then finishes the proof. 
\end{proof}
From now on we only consider the constant case $A_n\equiv A, \rho_n\equiv \rho$.
\begin{lemma}\label{lem: Obs (8)}
Let $A$ be a ${\rm{C}}^*$-algebra and let $\rho$ be a state on $A$. Define $L_{\rho}, D_{\rho}=L_{\rho}\cap L_{\rho}^*$ with respect to $\rho$. The set $\cS$ of sequences $(a_1,a_2,\ldots ) \in \ell_\infty(A)$
with 
$$\lim_{n\to \omega}  \| a_n b \| + \| b a_n \|=0$$ 
for all $b\in A$
contains  $c_\omega(A)$  and 
is contained in $D_{\rho} = L_{\rho}^*\cap L_{\rho}$.
\end{lemma}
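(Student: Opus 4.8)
The claim has two halves. For the inclusion $c_\omega(A)\subseteq \cS$: if $(a_n)\in c_\omega(A)$ then $\lim_{n\to\omega}\|a_n\|=0$, so for every $b\in A$ we have $\|a_nb\|+\|ba_n\|\le 2\|b\|\,\|a_n\|\to 0$ along $\omega$, hence $(a_n)\in\cS$. This is immediate.

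For the inclusion $\cS\subseteq D_\rho=L_\rho\cap L_\rho^*$: recall $L_\rho$ consists of $(a_n)$ with $\lim_{n\to\omega}\rho(a_n^*a_n)=0$, and $D_\rho=L_\rho\cap L_\rho^*$ consists of those with $\lim_{n\to\omega}\|a_n\|_\rho^\sharp=\lim_{n\to\omega}\rho(a_n^*a_n+a_na_n^*)^{1/2}=0$. So I must show: if $(a_n)\in\cS$, then $\lim_{n\to\omega}\rho(a_n^*a_n)=0$ and $\lim_{n\to\omega}\rho(a_na_n^*)=0$. The plan is to use an approximate unit. Let $(u_\lambda)$ be an approximate unit for $A$ consisting of positive contractions. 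Since $\rho$ is a state on $A$, we have $\rho(u_\lambda)\to 1$ (extending $\rho$ to $A^{**}$, $\rho(u_\lambda)\to\rho(p)$ where $p$ is the support projection, and in fact $\sup_\lambda\rho(u_\lambda)=\|\rho\|=1$ — more precisely $\rho(u_\lambda)\nearrow 1$ along the net). Fix $\varepsilon>0$ and choose $\lambda$ with $\rho(u_\lambda)>1-\varepsilon$; equivalently $\rho(1-u_\lambda)<\varepsilon$ in $A^{**}$. Then estimate $\rho(a_n^*a_n)$ by splitting $a_n^*a_n = a_n^*(1-u_\lambda)a_n + a_n^*u_\lambda a_n$. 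For the second term, $\rho(a_n^*u_\lambda a_n)\le \|u_\lambda^{1/2}a_n\|^2\cdot\|\rho\| = \|u_\lambda^{1/2}a_n\|^2$, and since $u_\lambda\in A$ and $(a_n)\in\cS$ (so $\lim_{n\to\omega}\|ba_n\|=0$ for $b=u_\lambda^{1/2}\in A$, using that $u_\lambda^{1/2}\in A$), this term $\to 0$ along $\omega$. For the first term, use the Cauchy–Schwarz inequality for the state $\rho$: $\rho(a_n^*(1-u_\lambda)a_n) = \rho\big((\,(1-u_\lambda)^{1/2}a_n)^*(1-u_\lambda)^{1/2}a_n\big)$... but $(1-u_\lambda)^{1/2}$ is not in $A$. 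Instead bound directly: $\rho(a_n^*(1-u_\lambda)a_n)\le \|a_n\|^2\,\rho(1-u_\lambda) < \varepsilon\sup_n\|a_n\|^2$ using positivity of $1-u_\lambda$ and of the functional $b\mapsto\rho(a_n^* b a_n)$ together with $0\le 1-u_\lambda\le 1$. Hence $\limsup_{n\to\omega}\rho(a_n^*a_n)\le \varepsilon\sup_n\|a_n\|^2$, and letting $\varepsilon\to 0$ gives $\lim_{n\to\omega}\rho(a_n^*a_n)=0$, i.e. $(a_n)\in L_\rho$. The same argument applied to $(a_n^*)$ — which also lies in $\cS$ since the defining condition $\lim\|a_nb\|+\|ba_n\|=0$ is symmetric under $a_n\mapsto a_n^*$, $b\mapsto b^*$ — gives $\lim_{n\to\omega}\rho(a_na_n^*)=0$, so $(a_n)\in L_\rho^*$. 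Therefore $(a_n)\in D_\rho$.

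The only subtle point — and the one I'd be most careful about — is justifying $\rho(u_\lambda)\to 1$ and the manipulation $\rho(a_n^*(1-u_\lambda)a_n)\le\|a_n\|^2\rho(1-u_\lambda)$, which implicitly works in $A^{**}$ (or uses that $b\mapsto\rho(a_n^*ba_n)$ extends to a positive functional of norm $\rho(a_n^*a_n)\le\|a_n\|^2$ on the unitization, with $0\le 1-u_\lambda$). Both are standard: the positive linear functional $\phi_n(b):=\rho(a_n^*ba_n)$ on $A$ (or its unitization) has norm $\|\phi_n\|=\lim_\mu\phi_n(u_\mu)=\lim_\mu\rho(a_n^*u_\mu a_n)$, and $\phi_n(u_\lambda)=\rho(a_n^*u_\lambda a_n)\to\|\phi_n\|=\rho(a_n^*a_n)$ as $\lambda$ increases, so $\rho(a_n^*a_n)-\rho(a_n^*u_\lambda a_n)=\phi_n(1-u_\lambda)\ge 0$ and is bounded by $\|\phi_n\|\,\|1-u_\lambda\|$... but $\|1-u_\lambda\|$ need not be small. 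The correct bound uses instead that in $A^{**}$, $\phi_n(1-u_\lambda)\le\|a_n\|^2\,\rho(1-u_\lambda)$ because $a_n^*(1-u_\lambda)a_n\le\|a_n\|^2(1-u_\lambda)$ as operators is false in general; rather one writes $a_n^*(1-u_\lambda)a_n = a_n^*(1-u_\lambda)^{1/2}(1-u_\lambda)^{1/2}a_n$ and applies Cauchy–Schwarz for $\rho$: $\rho(a_n^*(1-u_\lambda)a_n)\le\rho(a_n^*(1-u_\lambda)^{1/2}(1-u_\lambda)^{1/2}a_n)$ — this needs $(1-u_\lambda)^{1/2}a_n$ controlled, which again is circular. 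I'd therefore reorganize: pick a single positive contraction $h\in A$ with $\rho(h)>1-\varepsilon^2$ (possible since $\sup\{\rho(h):h\in A_+^{\le 1}\}=1$), so that $\rho((1-h))<\varepsilon^2$ where $1$ is the unit of $A^{**}$; then $|\rho(a_n^*a_n)-\rho(a_n^* h a_n)|=|\rho(a_n^*(1-h)a_n)|$ and by Cauchy–Schwarz for $\rho$ on $A^{**}$, $|\rho(a_n^*(1-h)a_n)|\le\rho(a_n^*(1-h)a_n\cdot a_n^*(1-h)a_n)^{1/2}\cdot 1$ is still awkward, so finally just use $|\rho(a_n^*(1-h)a_n)|^2\le\rho(a_n^*a_n)\rho(a_n^*(1-h)^2a_n)\le\|a_n\|^2\rho(a_n^*a_n)$... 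I would in the end invoke the clean fact (Cauchy–Schwarz): $\rho(a_n^*(1-h)a_n)\le\rho(a_n^*a_n)^{1/2}\rho\big((1-h)a_na_n^*(1-h)\big)^{1/2}\le\rho(a_n^*a_n)^{1/2}\|a_n\|\,\rho((1-h)^2)^{1/2}\le\|a_n\|\rho(a_n^*a_n)^{1/2}\varepsilon$, giving a self-improving inequality for $t_n:=\rho(a_n^*a_n)^{1/2}$: $t_n^2\le\rho(a_n^*ha_n)+\|a_n\|\varepsilon t_n$; since $h^{1/2}\in A$ and $(a_n)\in\cS$, $\rho(a_n^*ha_n)=\|h^{1/2}a_n\|_\rho^2\le\|h^{1/2}a_n\|^2\to 0$ along $\omega$; so $\limsup_\omega t_n^2\le\|a_n\|\varepsilon\limsup_\omega t_n$, whence $\limsup_\omega t_n\le\sup_n\|a_n\|\,\varepsilon$, and $\varepsilon\downarrow 0$ finishes it. The symmetric statement handles $L_\rho^*$, completing the proof.
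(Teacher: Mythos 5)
The first half of your argument ($c_\omega(A)\subseteq \mathcal{S}$) is fine, and so is your treatment of the term $\rho(a_n^*ha_n)=\|h^{1/2}a_n\|_\rho^2\le\|h^{1/2}a_n\|^2\to 0$. The gap is in the remaining term. Your final estimate
\[
\rho(a_n^*(1-h)a_n)\ \le\ \rho(a_n^*a_n)^{1/2}\,\rho\bigl((1-h)a_na_n^*(1-h)\bigr)^{1/2}
\]
is not an instance of Cauchy--Schwarz. For a state one has $|\rho(y^*x)|^2\le\rho(y^*y)\rho(x^*x)$; writing $a_n^*(1-h)a_n=y^*x$ with $y=(1-h)a_n$ and $x=a_n$ produces the factor $\rho(y^*y)=\rho(a_n^*(1-h)^2a_n)$, \emph{not} $\rho(yy^*)=\rho((1-h)a_na_n^*(1-h))$. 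These differ for non-tracial $\rho$, and the difference is exactly where the argument dies: $\rho(a_n^*(1-h)^2a_n)$ is again of the form $\rho(a_n^*c\,a_n)$ with $c\ge 0$ and $\rho(c)$ small, which is the quantity you were trying to bound in the first place. Concretely, in $M_2(\mathbb{C})$ with $\rho(x)=\langle xe_1,e_1\rangle$, $h=e_{11}$ and $a=e_{21}$ one gets $\rho(a^*(1-h)a)=\rho(e_{11})=1$, while your right-hand side is $0$ because $\rho((1-h)^2)=\rho(e_{22})=0$. The underlying obstruction, which you correctly sensed but did not escape, is that smallness of $\rho(1-h)$ only says $1-h$ is small ``at the GNS vector $\xi_\rho$'', whereas $\rho(a_n^*(1-h)a_n)$ evaluates $1-h$ at the vectors $d_\rho(a_n)\xi_\rho$; no choice of $h\in A_+^{\le 1}$ controls the latter.

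The repair is to split the \emph{vector} rather than the element $a_n^*a_n$: in the GNS representation $(d_\rho,H_\rho,\xi_\rho)$,
\[
\rho(a_n^*a_n)^{1/2}=\|d_\rho(a_n)\xi_\rho\|\le \|a_n\|\,\|\xi_\rho-d_\rho(h)\xi_\rho\|+\|d_\rho(a_nh)\xi_\rho\|\le \|a_n\|\,(1-\rho(h))^{1/2}+\|a_nh\|,
\]
and now both terms are controlled (the second by the hypothesis with $b=h$, the first by choosing $\rho(h)$ close to $1$). This is the paper's proof in slightly different clothing: the paper notes that $\|d_\rho(a_nb)\xi_\rho\|\le\|a_nb\|\to 0$ for all $b\in A$, so by cyclicity of $\xi_\rho$ and uniform boundedness of $(d_\rho(a_n))_n$ one gets $d_\rho(a_n)\to 0$ strongly along $\omega$, whence $\rho(a_n^*a_n)=\|d_\rho(a_n)\xi_\rho\|^2\to 0$; membership in $L_\rho^*$ then follows by applying the same argument to $(a_n^*)$, exactly as in your symmetry remark.
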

\begin{proof}It is clear that $c_{\omega}(A)\subset \mathcal{S}$. We show that $\mathcal{S}\subset D_{\rho}$. 
Let $(a_1,a_2,\dots)\in D_{\rho}$, and let $d_{\rho}\colon A\to \cL(H_{\rho})$ be the GNS representation of $A$ on a Hilbert space $H_{\rho}$ with respect to $\rho$ such that $\rho(a)=\nai{d_{\rho}(a)\xi_{\rho}}{\xi_{\rho}}$, where $\xi_{\rho}\in H_{\rho}$ is the corresponding cyclic vector. Then for each $b\in A$, we have 
\eqa{
\lim_{n\to \omega}\|d_{\rho}(a_nb)\xi_{\rho}\|\le \lim_{n\to \omega}\|a_nb\|=0.
} 
Since $d_{\rho}(A)\xi_{\rho}$ is dense in $H_{\rho}$ and $(a_1,a_2,\dots)$ is bounded, it follows that $d_{\rho}(a_n)\to 0$ strongly along $\omega$. In particular, we have 
$$\lim_{n\to \omega}\rho(a_n^*a_n)=\lim_{n\to \omega}\|d_{\rho}(a_n)\xi_{\rho}\|^2=0.$$
This shows that $(a_1,a_2,\dots)\in L_{\rho}$. Similar arguments show that $(a_1,a_2,\dots)\in L_{\rho}^*$, whence $\mathcal{S}\subset D_{\rho}$. 
\end{proof}
Notice that $\cS = \pi_\omega^{-1}(\Ann(A,A_\omega))$.

It follows that  
\begin{equation}
\Ann(A,A_\omega) \subseteq  \pi_\omega(D_{\rho}) \subseteq  A_\omega\,. \label{eq:  Obs(8)-1}
\end{equation}
Moreover, it is easy to see that (use $c_{\omega}(A)\subset D_{\rho}$)
\begin{equation}
\cN ( \pi_\omega (D_{\rho})) =  \pi_\omega (\cN (D_{\rho})).\label{eq: Obs (8)-2}
\end{equation}
In the rest of this section we explicitly distinguish $A$ and $\Delta (A)$ in order to state the next result without ambiguity. We denote by $\pi_{D_{\rho}}$ the quotient map $\mathcal{N}(D_{\rho})\to \mathcal{N}(D_{\rho})/D_{\rho}$, where $D_{\rho}$ is defined in terms of $(A,\rho)$. Thus for example, $\Ann (A, A_\omega)$
and $A'\cap A_\omega$ stand for $\Ann( \pi_\omega(\Delta(A)), A_\omega)$
respectively $\pi_\omega(\Delta(A))'\cap A_\omega$.
\begin{prop}\label{prop: Obs (9)-1} Let $(A,\rho)$ be as in Lemma \ref{lem: Obs (8)}. If $(A,\rho)$ satisfies the condition $\Delta(A)\subset \mathcal{N}(D_{\rho})$, then it holds that 
$$A'\cap (A,\rho)_\omega := \pi_{D_{\rho}}(\Delta(A))'\cap (\cN (D_{\rho})/D_{\rho})$$
is a quotient ${\rm C}^{\ast}$-algebra of a ${\rm C}^{\ast}$-subalgebra of $F(A)\,$. 
\end{prop}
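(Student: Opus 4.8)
The plan is to realize $A' \cap (A,\rho)_\omega$ as a genuine quotient of a C$^*$-subalgebra of $F(A) = (\Delta(A)' \cap A_\omega)/\Ann(A,A_\omega)$ by exhibiting a surjective $*$-homomorphism from an appropriate relative commutant inside $A_\omega$ onto the relative commutant inside $(A,\rho)_\omega = \cN(D_\rho)/D_\rho$, and then checking that its kernel sits between $\Ann(A,A_\omega)$ and the candidate subalgebra in the right way. The two intermediate objects to compare are the ideals $c_\omega(A) \subseteq \cS = \pi_\omega^{-1}(\Ann(A,A_\omega)) \subseteq D_\rho$ of Lemma~\ref{lem: Obs (8)}, together with the $\sigma$-ideal $\pi_\omega(D_\rho) \triangleleft \pi_\omega(\cN(D_\rho))$ provided by Proposition~\ref{prop: Obs (7)} and the identity $\cN(\pi_\omega(D_\rho)) = \pi_\omega(\cN(D_\rho))$ from \eqref{eq: Obs (8)-2}.

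First I would set $C := \pi_\omega(\Delta(A)) \subseteq A_\omega$; by hypothesis $\Delta(A) \subseteq \cN(D_\rho)$, so $C \subseteq \pi_\omega(\cN(D_\rho))$, and $C$ is separable since $A$ is. Write $E := \pi_\omega(D_\rho)$, a $\sigma$-ideal of the ambient algebra $\pi_\omega(\cN(D_\rho))$. Applying Theorem~\ref{thm: pi_I is surjective} with the ambient C$^*$-algebra $\pi_\omega(\cN(D_\rho))$, the $\sigma$-ideal $E$, and the separable subalgebra $C$, one gets a short exact sequence
\begin{equation*}
0 \to C' \cap E \to C' \cap \pi_\omega(\cN(D_\rho)) \stackrel{\pi_E}{\to} \pi_E(C)' \cap \big(\pi_\omega(\cN(D_\rho))/E\big) \to 0.
\end{equation*}
Now $\pi_\omega(\cN(D_\rho))/E = \cN(D_\rho)/D_\rho = (A,\rho)_\omega$ (using \eqref{eq: Obs (8)-2} and the isomorphism theorem), and under this identification $\pi_E(C) = \pi_{D_\rho}(\Delta(A))$, so the right-hand term is exactly $A' \cap (A,\rho)_\omega$ as defined in the statement. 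Hence $A' \cap (A,\rho)_\omega$ is a quotient of $B_0 := C' \cap \pi_\omega(\cN(D_\rho))$.

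Next I would identify $B_0$ as (the image in $F(A)$ of) a subalgebra of $\Delta(A)' \cap A_\omega$. Since $\pi_\omega(\cN(D_\rho)) \subseteq A_\omega$, we have $B_0 = C' \cap \pi_\omega(\cN(D_\rho)) \subseteq C' \cap A_\omega = \Delta(A)' \cap A_\omega$, so $B_0$ is a C$^*$-subalgebra of the numerator of $F(A)$. By \eqref{eq: Obs(8)-1}, $\Ann(A,A_\omega) \subseteq \pi_\omega(D_\rho) = E$; moreover $\Ann(A,A_\omega) = \pi_\omega(\cS)$ is contained in $\Delta(A)' \cap A_\omega$ and its elements lie in $B_0$ because an $\omega$-approximately annihilating sequence is in particular $\omega$-central, and $\cS \subseteq D_\rho \subseteq \cN(D_\rho)$ so $\pi_\omega(\cS) \subseteq B_0$. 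Thus $\Ann(A,A_\omega)$ is a closed ideal of $B_0$ contained in $C' \cap E = \ker(\pi_E|_{B_0})$, and $B := B_0/\Ann(A,A_\omega)$ is a C$^*$-subalgebra of $F(A)$. Setting $J := (C' \cap E)/\Ann(A,A_\omega) \triangleleft B$, the quotient $B/J$ is precisely $A' \cap (A,\rho)_\omega$, which is what we want.

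The step I expect to be the main obstacle is the bookkeeping around the three nested ideals: verifying carefully that $\Ann(A,A_\omega) \subseteq C' \cap E$ (i.e. that approximately-annihilating sequences really do land in both the relative commutant of $C$ and in $\pi_\omega(D_\rho)$) and that the restriction $\pi_E|_{B_0}$ is still surjective onto $\pi_E(C)' \cap ((A,\rho)_\omega)$ — this surjectivity is exactly the content of Theorem~\ref{thm: pi_I is surjective} applied with $C' \cap \pi_\omega(\cN(D_\rho))$ in the middle, but one must make sure the ambient algebra in that application is $\pi_\omega(\cN(D_\rho))$ and not all of $A_\omega$, since $\pi_\omega(D_\rho)$ is only known to be a $\sigma$-ideal of $\pi_\omega(\cN(D_\rho))$ by Proposition~\ref{prop: Obs (7)}, not of $A_\omega$. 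Once these inclusions and the identification of $\pi_E(C)$ with $\pi_{D_\rho}(\Delta(A))$ are pinned down, the result is a formal consequence of the exact sequence.
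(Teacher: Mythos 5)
Your proposal is correct and follows essentially the same route as the paper's own proof: both apply Theorem \ref{thm: pi_I is surjective} to the $\sigma$-ideal $\pi_\omega(D_{\rho})$ inside $\pi_\omega(\cN(D_{\rho}))$ (Proposition \ref{prop: Obs (7)}), identify $\pi_\omega(\cN(D_{\rho}))/\pi_\omega(D_{\rho})$ with $\cN(D_{\rho})/D_{\rho}$ via $D_{\rho}=\pi_\omega^{-1}(\pi_\omega(D_{\rho}))$, and use $\Ann(A,A_\omega)\subseteq\pi_\omega(D_{\rho})$ from (\ref{eq:  Obs(8)-1}) to factor the resulting surjection through the subalgebra $(\pi_\omega(\Delta(A))'\cap\pi_\omega(\cN(D_{\rho})))/\Ann(A,A_\omega)$ of $F(A)$. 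The bookkeeping you flag as the main obstacle is exactly what the paper verifies in its Claim (i)--(iii), and your treatment of it is sound.
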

\begin{proof}
Let $E_{\rho}:=\pi_\omega(D_{\rho})$, and let 
$\pi_{E_{\rho}}\colon \pi_\omega(\cN(D_{\rho}))\to 
\pi_\omega(\cN (D_{\rho}))/\pi_\omega(D_{\rho})$ 
be the quotient map. Note that $D_{\rho}=\pi_{\omega}^{-1}(\pi_{\omega}(D_{\rho}))$. Indeed, it is clear that $D_{\rho}\subset \pi_{\omega}^{-1}(\pi_{\omega}(D_{\rho}))$. On the other hand, let $x\in \pi_{\omega}^{-1}(\pi_{\omega}(D_{\rho}))$. Then there exists $y\in D_{\rho}$ such that $\pi_{\omega}(x)-\pi_{\omega}(y)=0$, i.e., $x-y\in c_{\omega}(A)\subset D_{\rho}$. 
Therefore $x\in y+D_{\rho}=D_{\rho}$ and we obtain $D_{\rho}\supset \pi_{\omega}^{-1}(\pi_{\omega}(D_{\rho}))$. 

We next observe that there is a *-isomorphism $\Phi\colon \mathcal{N}(D_{\rho})/D_{\rho}\to \mathcal{N}(\pi_{\omega}(D_{\rho}))/\pi_{\omega}(D_{\rho}) $. Since $\text{Ker}(\pi_{\omega}|_{\mathcal{N}(D_{\rho})})=c_{\omega}(A)\subset D_{\rho}$, $\pi_{\omega}|_{\mathcal{N}(D_{\rho})}$ factors through $\overline{\pi}_{\omega}\colon \mathcal{N}(D_{\rho})/D_{\rho}\to \mathcal{N}(\pi_{\omega}(D_{\rho}))=\pi_{\omega}(\mathcal{N}(D_{\rho})$. 
\[
\xymatrix{
\mathcal{N}(D_{\rho})\ar[d]^{\pi_{D_{\rho}}}\ar[r]^{\pi_{\omega}} & \mathcal{N}(\pi_{\omega}(D_{\rho}))\ar[d]^{\pi_{E_{\rho}}} \\
\mathcal{N}(D_{\rho})/D_{\rho}\ar@{-->}[ur]^{\overline{\pi}_{\omega}}\ar[r]^{\Phi\ } &\ \mathcal{N}(\pi_{\omega}(D_{\rho}))/\pi_{\omega}(D_{\rho}) 
}
\]
Therefore we set $\Phi=\pi_{E_{\rho}}\circ \overline{\pi}_{\omega}$, which is clearly surjective. To see that $\Phi$ is faithful, suppose $x\in \mathcal{N}(D_{\rho})$ satisfies $\pi_{E_{\rho}}\circ \overline{\pi}_{\omega}(x+D_{\rho})=\pi_{E_{\rho}}(\pi_{\omega}(x))=0$. Then $\pi_{\omega}(x)\in \text{Ker}(\pi_{E_{\rho}})=\pi_{\omega}(D_{\rho})$. Therefore $x\in \pi_{\omega}^{-1}(\pi_{\omega}(D_{\rho}))=D_{\rho}$. Therefore $\Phi$ is injective, whence a *-isomorphism.    
Next we see that:\\
\textbf{Claim.}
\begin{itemize}
\item[(i)] $\Phi^{-1}\circ \pi_{E_{\rho}}\colon \pi_{\omega}(\mathcal{N}(D_{\rho}))\to \cN(D_{\rho})/D_{\rho}$ maps $\Delta(a)+c_{\omega}(A)=\pi_{\omega}(\Delta (a))\ (a\in A)$ to $\Delta(a)+D_{\rho}$.
\item[(ii)] $\Phi^{-1}\circ \pi_{E_{\rho}} (\pi_{\omega}(\Delta(A))'\cap \pi_{\omega}(\mathcal{N}(D_{\rho})))=\pi_{D_{\rho}}(\Delta(A))'\cap \mathcal{N}(D_{\rho})/D_{\rho}$. 
\item[(iii)] $\Ann(A,A_\omega)\subset E_{\rho}=\pi_\omega(D_{\rho})$,
and  $\Ann(A,A_\omega)$ is an ideal of 
$C:=\pi_\omega(\Delta(A))' \cap \pi_\omega(\cN (D_{\rho}))\,.$
\end{itemize}
For (i), we have $\pi_{E_{\rho}}(\Delta(a)+c_{\omega}(A))=\pi_{\omega}(\Delta(a))+\pi_{\omega}(D_{\rho})=\Phi(\Delta(a)+D_{\rho})$.\\
To show (ii), by Proposition \ref{prop: Obs (7)}, $E_{\rho}=\pi_{\omega}(D_{\rho})$ is a $\sigma$-ideal of $\pi_{\omega}(\mathcal{N}(D_{\rho}))$. Since by the assumption that $\Delta(A)\subset \mathcal{N}(D_{\rho})$, $\pi_{\omega}(\Delta(A))$ is a separable C$^{\ast}$-subalgebra of $\pi_{\omega}(\mathcal{N}(D_{\rho}))$. Therefore by Theorem \ref{thm: pi_I is surjective}, 
$\pi_{E_{\rho}}$ maps $\pi_{\omega}(\Delta(A))'\cap \pi_{\omega}(\mathcal{N}(D_{\rho}))$ onto $\pi_{E_{\rho}}(\pi_{\omega}(\Delta(A)))'\cap \pi_{E_{\rho}}(\pi_{\omega}(\mathcal{N}(D_{\rho})))$. 
Since $\pi_{E_{\rho}}(\pi_{\omega}(\Delta(A)))=\Phi(\pi_{D_{\rho}}(\Delta(A)))$ by (i), we see that 
$$\Phi(\pi_{D_{\rho}}(\Delta(A))'\cap \mathcal{N}(D_{\rho})/D_{\rho})=\pi_{E_{\rho}}(\pi_{\omega}(\Delta(A)))'\cap \pi_{E_{\rho}}(\pi_{\omega}(\mathcal{N}(D_{\rho}))).$$
This proves (ii).\\
We show (iii). The first statement is proved in (\ref{eq:  Obs(8)-1}). To see that $\Ann(A,A_{\omega})$ is an ideal of $C=\pi_{\omega}(\Delta(A))'\cap \pi_{\omega}(\mathcal{N}(D_{\rho}))$, let $x=\pi_{\omega}(x_1,x_2,\dots)\in \Ann(A,A_{\omega})$ and let $y=\pi_{\omega}(y_1,y_2,\dots)\in C$. Then for every $a\in A$, 
$$yx\cdot \pi_{\omega}(\Delta(a))=0,\ \ xy\cdot \pi_{\omega}(\Delta(a))=x\pi_{\omega}(\Delta(a))y=0.$$
Thus $xy,yx\in \Ann(A,A_{\omega})$ and the claim is proved.
      
Now, since $\Ann(A,A_\omega)\subset E_{\rho}\,,$ by (\ref{eq:  Obs(8)-1}), the *-homomorphism $\Psi:=\Phi^{-1}\circ \pi_{E_{\rho}} | C\to A'\cap (A,\rho)_{\omega}=\pi_{D_{\rho}}(\Delta(A))'\cap \mathcal{N}(D_{\rho})/D_{\rho}$ factorizes through  $\overline{\Psi}\colon C/\Ann(A,A_\omega)\to A'\cap (A,\rho)_{\omega}\,.$ Then $B:= C / \Ann(A,A_\omega)$ is a \cst-subalgebra of $F(A)= (A'\cap A_\omega)/\Ann (A,A_\omega)\,,$ which is mapped by $\overline{\Psi}$ onto $A'\cap (A,\rho)_{\omega}\,.$ This finishes the proof. 
\end{proof}
Although it is not necessary to have a detailed discussion, we consider when the condition $\Delta(A)\subset \mathcal{N}(D_{\rho})$ is satisfied. 
\begin{prop}\label{prop: Obs (10)-1}
Let $(A,\rho)$ be as in Lemma \ref{lem: Obs (8)}. Then the following 6 conditions are all equivalent.
\begin{itemize}
\item[(i)]   $(a_1b,a_2b,\ldots ) \in L_{\rho}$ 
for all $b\in A$ and $(a_1,a_2,\ldots ) \in L_{\rho}$.

\item[(ii)] $\Delta(A)$  is contained in $\cN(D_{\rho})$.
\item[(iii)]  $\lim_{n\to\omega} \overline{\rho}(b^*a_n^*a_n b) =0$ 
for all $b\in A$ and $(a_1,a_2,\ldots)\in \ell_\infty(A^{**})$
with $\lim_{n\to\omega} \overline{\rho}(a_n^*a_n) =0$, 
where $\overline{\rho}$ denotes the unique extension to
$A^{**}$ of the state $\rho$ on $A$ to a normal state on $A^{**}$.

\item[(iv)] $\lim_{n\to \omega} \overline{\rho}(b^*a_n^*a_n b) =0$ 
for all $b\in A^{**}$ and 
$(a_1,a_2,\ldots)\in \ell_\infty(A^{**})$
with $\lim_{n\to \omega} \overline{\rho}(a_n^*a_n) =0$.

\item[(v)]  the support projection $p$  of $\rho$ 
(and of $\overline{\rho}$) in the second
conjugate $A^{**}$  of $A$ is in the center of  $A^{**}$.
\end{itemize}
\end{prop}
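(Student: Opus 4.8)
The plan is to prove the equivalence of the listed conditions by running a cycle of implications, e.g.\ (i)$\,\Rightarrow\,$(ii)$\,\Rightarrow\,$(v)$\,\Rightarrow\,$(iv)$\,\Rightarrow\,$(iii)$\,\Rightarrow\,$(i). Throughout I would fix the support projection $p\in A^{**}$ of $\overline\rho$, write $\|a\|_{\overline\rho}:=\overline\rho(a^*a)^{1/2}$, and rely on three elementary facts: $\overline\rho(w)=\overline\rho(pwp)$ for all $w\in A^{**}$ (Cauchy--Schwarz for the form $(x,y)\mapsto\overline\rho(x^*y)$ together with $\overline\rho(1-p)=0$); consequently $\rho(c^*c)=\|c\|_{\overline\rho}^{2}=\|cp\|_{\overline\rho}^{2}$ for $c\in A$; and $\overline\rho$ restricts to a \emph{faithful} normal state $\overline\rho_1$ on the corner $M_1:=pA^{**}p$. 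With these in hand, a bounded sequence $(a_n)$ lies in $L_\rho$ iff $\lim_{n\to\omega}\|a_np\|_{\overline\rho}=0$, and in $D_\rho$ iff moreover $\lim_{n\to\omega}\|a_n^{*}p\|_{\overline\rho}=0$; so (i), (iii), (iv) are just the assertion ``right multiplication by $A$, resp.\ by $A^{**}$, does not enlarge $L_\rho$'', stated for sequences in $A$ and for sequences in $A^{**}$.

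Two arrows are immediate: (iv)$\,\Rightarrow\,$(iii) is the restriction of $b$ from $A^{**}$ to $A$, and (iii)$\,\Rightarrow\,$(i) is the restriction of $(a_n)$ from $\ell_\infty(A^{**})$ to $\ell_\infty(A)$, using $\rho=\overline\rho|_A$. For (i)$\,\Rightarrow\,$(ii) I would argue algebraically in $\ell_\infty(A)$, without passing to the bidual: $L_\rho$ is always a closed \emph{left} ideal of $\ell_\infty(A)$, and (i) says precisely that it is two-sided. Given $b\in A$ and $(x_n)\in D_\rho$, both $(x_n)$ and $(x_n^{*})$ lie in $L_\rho$; then $(bx_n)\in L_\rho$ is automatic from $x_n^{*}b^{*}bx_n\le\|b\|^{2}x_n^{*}x_n$, while $(bx_n)\in L_\rho^{*}$, i.e.\ $(x_n^{*}b^{*})\in L_\rho$, is exactly (i) applied to $(x_n^{*})\in L_\rho$ and $b^{*}\in A$; hence $(bx_n)\in D_\rho$, and symmetrically $(x_nb)\in D_\rho$, so $\Delta(b)D_\rho+D_\rho\Delta(b)\subset D_\rho$, i.e.\ $\Delta(A)\subset\cN(D_\rho)$.

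For (v)$\,\Rightarrow\,$(iv) the one genuinely analytic input is the classical fact that a faithful normal state metrizes the $\sigma$-strong topology on norm-bounded sets: in the GNS picture, $\|x_\alpha\xi_\phi\|\to0$ with $\sup_\alpha\|x_\alpha\|<\infty$ forces $x_\alpha\to0$ $\sigma$-strongly, since $x_\alpha$ commutes with $N'$ and $N'\xi_\phi$ is dense. Assuming $p$ is central, and given $b\in A^{**}$ and $(a_n)$ with $\overline\rho(a_n^{*}a_n)\to0$ along $\omega$, I set $c_n:=a_np=pa_np\in M_1$; then $\|c_n\|_{\overline\rho_1}\to0$ along $\omega$, hence $c_n\to0$ $\sigma$-strongly in $M_1$, hence $c_n(pbp)\to0$ $\sigma$-strongly (right multiplication by a fixed element is $\sigma$-strongly continuous), hence $\overline\rho_1\bigl((c_n\,pbp)^{*}(c_n\,pbp)\bigr)\to0$ along $\omega$; since $p$ is central this last quantity rewrites as $\overline\rho(b^{*}a_n^{*}a_nb)$, giving (iv).

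The step I expect to be the main obstacle is (ii)$\,\Rightarrow\,$(v), which I would prove by contraposition. If $p$ is not central then $(1-p)A^{**}p\ne\{0\}$, and since $A$ is $\sigma$-weakly dense in $A^{**}$ while $y\mapsto(1-p)yp$ is $\sigma$-weakly continuous, there is $b\in A$ with $(1-p)b^{*}p\ne0$, so $\delta:=\|(1-p)b^{*}p\|_{\overline\rho}^{2}>0$ by faithfulness of $\overline\rho_1$. Using Kaplansky density I take a net in the unit ball of $A$ converging $\sigma$-strong-$\ast$ to $1-p$; along it $\|v_\mu p\|_{\overline\rho}\to0$, $\|v_\mu^{*}p\|_{\overline\rho}\to0$ and $\|v_\mu b^{*}p\|_{\overline\rho}\to\sqrt\delta$, so I may select $v_{\mu_n}$ with the first two quantities below $1/n$ and the third above $\sqrt\delta/2$. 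Then $(x_n):=(v_{\mu_n}^{*})$ lies in $D_\rho$ (because $\rho(x_n^{*}x_n)=\|v_{\mu_n}^{*}p\|_{\overline\rho}^{2}$ and $\rho(x_nx_n^{*})=\|v_{\mu_n}p\|_{\overline\rho}^{2}$ both tend to $0$), whereas $\rho(bx_nx_n^{*}b^{*})=\|v_{\mu_n}b^{*}p\|_{\overline\rho}^{2}>\delta/4$, so $(bx_n)\notin L_\rho^{*}$ and therefore $\Delta(b)D_\rho\not\subset D_\rho$, contradicting (ii). Apart from this, the remaining work is bookkeeping: extracting $\N$-indexed sequences from Kaplansky nets (harmless, as at each stage only finitely many $\overline\rho$-seminorm conditions are imposed), keeping the projection $p$ in its correct place via $\overline\rho(w)=\overline\rho(pwp)$, and isolating cleanly the single non-formal lemma, that a faithful normal state induces the $\sigma$-strong topology on bounded sets.
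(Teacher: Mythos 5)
Your proposal is correct, and the individual tools are the same ones the paper uses --- Kaplansky density to pass between $A$ and $A^{**}$, the GNS-commutant trick for the analytic implication, and a contrapositive built from the non-centrality of $p$ --- but you organize the cycle differently. The paper runs (i)$\Leftrightarrow$(ii), then (i)$\Rightarrow$(iii)$\Rightarrow$(iv)$\Rightarrow$(v)$\Rightarrow$(i): the two ``upward'' arrows (i)$\Rightarrow$(iii) and (iii)$\Rightarrow$(iv) each consume one Kaplansky approximation, after which $\neg$(v)$\Rightarrow\neg$(iv) is a two-line argument with a partial isometry $u\in A^{**}$, $u^*u\le p$, $uu^*\le 1-p$ (take $a_n:=u^*$, $b:=u$), and the analytic work is confined to (v)$\Rightarrow$(i), proved by exactly your $T\in d(M)'$ argument on the corner $A^{**}p$. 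You instead make (iv)$\Rightarrow$(iii)$\Rightarrow$(i) trivial restrictions, prove the analytic step once at full strength as (v)$\Rightarrow$(iv), and pay for it in (ii)$\Rightarrow$(v), where the witness to non-centrality lives in $A^{**}$ and must be pushed down into the unit ball of $A$ by a Kaplansky net before it can violate the normalizer condition; this is precisely the step you flag as the main obstacle, and your construction of $(x_n)=(v_{\mu_n}^*)\in D_{\rho}$ with $(bx_n)\notin L_{\rho}^*$ is sound, as is the extraction of an $\N$-indexed sequence from the net (only finitely many seminorm conditions per index). Net effect: the same total amount of Kaplansky/GNS work, distributed over different arrows --- the paper's decomposition keeps each single implication shorter and makes $\neg$(v)$\Rightarrow\neg$(iv) purely algebraic, while yours isolates the one analytic lemma (a faithful normal state induces the $\sigma$-strong topology on bounded sets) more cleanly and gets two of the five arrows for free.
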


\begin{proof}
Since $L_{\rho}^*$ is a right closed ideal of $\ell_{\infty}(A)$, it is easy to see (i)$\Leftrightarrow $(ii). 
(i)$\Rightarrow$(iii):\,  
Let $(a_1,a_2,\ldots)\in \ell_\infty(A^{**})$
with 
$\lim_\omega \overline{\rho}(a_n^*a_n) =0$ and $b\in A$.

Consider a normal unital *-representation $d\colon A^{**}\to \cL(H)$
of $A^{**}$ on a Hilbert space space $H$, such that each 
normal state of $A^{**}$ is a vector state for a vector in $H$.
Let $\xi \in H$ be a unit vector such that  $\overline{\rho}(a)= \langle d(a)\xi,\xi \rangle$ holds
for all $a\in A^{**}$.   By Kaplansky density Theorem, for each $n\in \mathbb
N
$, there are $c_n\in A\subset A^{**}$ with $\| c_n \| \leq \| a_n\|$
and $\| d(c_n-a_n) \xi\| + \| d(c_n-a_n) d(b)\xi\|  \leq 2^{-n}$.
Then $(c_1,c_2,\ldots)\in \ell_\infty(A)$, and we have
$$\rho(c_n^*c_n)^{1/2}= \| d(c_n)\xi \|  \leq  \|d(a_n) \xi\| + 2^{-n}
= \overline{\rho}(a_n^*a_n)^{1/2}+ 2^{-n},\ \ \ n\in \mathbb{N}.$$
In particular  $(c_1,c_2,\ldots)\in L_{\rho}$ holds. Since  
$$\overline{\rho} (b^*(a_n^*a_n)b)^{1/2} = \| d(a_n)d(b)\xi\| 
\leq  \| d(c_n)d(b)\xi\| + 2^{-n}= \rho(b^*c_n^*c_nb)^{1/2}+ 2^{-n}\,,$$
it follows that $\lim_{n\to \omega} \overline{\rho} (b^*(a_n^*a_n)b)=0$.

\smallskip

(iii)$\Rightarrow$(iv):\,
Let $(a_1,a_2,\ldots)\in \ell_\infty(A^{**})$
with $\lim_{n\to \omega} \overline{\rho}(a_n^*a_n) =0$
and $b\in A^{**}$.
Then 
$$\gamma:=\lim_{n\to \omega} \overline{\rho}(b^*a_n^*a_nb)^{\frac{1}{2}}$$
is a well defined real number with 
$0\leq \gamma \leq \|b \| \cdot \sup_n \| a_n \|$.

Let $\varepsilon>0$.
We consider $A^{**}$ as a von Neumann-algebra in its 
universal strongly continuous representation
$d\colon A^{**}\to \cL(H)$.
Then the normal state $\overline{\rho}$
on
$A^{**}$ is a vector state 
$\overline{\rho}(a)= \langle d(a)\xi,\xi \rangle$
for $a\in A^{**}$ with $\xi \in H,\ \|\xi\|=1$.

Let $\delta := \varepsilon /  (1+ \sup_n \| a_n \|)>0$.
 By Kaplansky density Theorem, there exists $c\in A$ with
 $\| c \| \leq \|b\|$ and 
 $ \| d(b-c)\xi \| < \delta $.
 Then  $\lim_{n\to \omega} \overline{\rho}(c^*a_n^*a_nc) =0$
 by (iii), and  
 $$ 
 \overline{\rho}(b^*a_n^*a_nb)^{1/2}=\| d(a_n) d(b)x\|
 \leq \| d(a_n) d(c)x \| + \delta \cdot \| a_n \| <  
 \overline{\rho}(c^*a_n^*a_nc)^{1/2} +  \varepsilon\,.
 $$
Therefore it follows that
$$ \gamma=\lim_{n\to \omega} \overline{\rho}(b^*a_n^*a_nb)^{1/2}
\leq \varepsilon\,.$$
Since $\varepsilon>0$ is arbitrary, $\gamma=0$ holds, which proves (iv). 
\smallskip

(iv)$\Rightarrow$(v):\,  We show the contrapositive. Suppose that the support projection
$p\in A^{**}$ of the normal state  $\overline{\rho}$ is not in the center
of $A^{**}$.  Then the projections $p,1-p$ are not centrally orthogonal in $A^{**}$, so that there exists a non-zero partial isometry $u\in A^{**}$ with $u^*u\leq p$ and $uu^*\leq 1-p$. 
In particular $\overline{\rho}(uu^*)=0$
and $\overline{\rho}(u^*u)=: \gamma >0$.
Then $a_n:= u^*\ (n\in \mathbb{N})$ and $b:= u$ satisfy  $\overline{\rho}(a_n^*a_n)=0$ 
and $\overline{\rho}(b^*a_n^*a_nb)=\gamma$ for all $n\in \N$,
which contradicts property (iv).

(v)$\Rightarrow$(i):\, Assume (v). 
Consider the von Neumann algebra $M:= A^{**}p$ and a normal faithful state $\mu:=\overline{\rho}|_M$ on $M$ uniquely determined by the condition $\mu(ap)=\rho (a)\ (a\in A)$, where $p$
is the support projection of the normal state $\overline{\rho}$
on $A^{**}$ that extends $\rho$. 
We first show that $\Delta(M)\subset \mathcal{N}(D_{\mu})$. This is well-known in von Neumann algebra theory, but we include a proof for completeness. 
Let $d=d_{\mu}\colon M\to \cL(H)$ be the GNS representation of $M$ with respect to $\mu$ with the corresponding cyclic vector $\xi\in H$. Then $\xi$ is cyclic and separating for $d(M)=d(M)''$. 
Now let $(a_1,a_2,\ldots)\in \ell_\infty(M)$ with
$\lim_\omega  \mu(a_n^*a_n) =0$, and
$b\in M$. 
Define $\gamma \in [0, \|b\| \sup_n \| a_n\|]$
by $\gamma:= \lim_{n\to \omega}  \mu(b^*a_n^*a_nb)^{\frac{1}{2}}$.
Let $\varepsilon >0$ and 
$\delta:= \varepsilon /(1+ \sup_n \| a_n \|)$.
We are going to show that $ \gamma \leq \varepsilon$.
Recall that since $\xi$ is separating for $d(M)'$, $d(M)'\xi$ is dense in $H$. Therefore there exists $T\in d(M)'$ with $ \| T\xi - d(b)\xi \| <\delta$.
Hence, for all $a\in M$, 
$$\mu (b^*a^*ab)^{1/2} = 
\| d(a)d(b)\xi \| \leq \|d(a)T\xi\| + \delta  \cdot \| a\|$$ 
and
$$ \|d(a)T\xi\| = \| Td(a)\xi \|  \leq  \| T \| \mu (a^*a)^{1/2}\,.$$
Thus, for all $n\in \N$, 
$$\mu(b^*a_n^*a_nb)^{1/2} \leq \delta \cdot \sup_n \| a_n\|
+  \|T\| \mu(a_n^*a_n)^{1/2}\,.$$
Since $\lim_{n\to \omega} \mu(a_n^*a_n)^{1/2}=0$, it follows that 
$$\gamma=\lim_{n\to \omega} \mu (b^*a_n^*a_nb)^{1/2}\leq 
\delta \cdot \sup_n \| a_n \| < \varepsilon\,,
$$
which proves the claim. Therefore, we obtain $\lim_{n\to \omega}\mu(b^*a_n^*a_nb)=0$, so that $b\in \mathcal{N}(D_{\mu})$ by (i)$\Leftrightarrow $(ii).

Now suppose that $(a_1,a_2,\dots)\in L_{\rho}$ and $b\in A$ are given. Then set $\tilde{a}_n:=a_np, \tilde{b}:=bp\in M$. Then $\lim_{n\to \omega}\mu(\tilde{a}_n^*\tilde{a}_n)=\lim_{n\to \omega}\rho(a_n^*a_n)=0$, whence by the above argument, we have
$$\lim_{n\to \omega}\rho(b^*a_n^*a_nb)=\lim_{n\to \omega}\mu(\tilde{b}^*\tilde{a}_n^*\tilde{a}_n\tilde{b})=0\,.$$
This proves (i).   
\end{proof}
\subsection{Proof of Theorem \ref{thm: main2}}
We are now ready to prove Theorem \ref{thm: main2}. 
\begin{prop}\label{lem: Obs (12)-1}
Let $M$ be a von Neumann algebra with separable predual, $A$ be a weakly dense separable {\rm{C}}$^*$-subalgebra of $M$, and let $\rho$ be a normal faithful state on $M$.  Set $\mu:=\rho|_A$. Define $D_{\rho}:=L_{\rho}\cap L_{\rho}^*,\ L_{\rho}:=\{(x_1,x_2,\dots)\in \ell_{\infty}(M);\lim_{n\to \omega}\rho(x_n^*x_n)=0\}$ and define $L_{\mu},\ D_{\mu}=L_{\mu}\cap L_{\mu}^*\subset \ell_{\infty}(A)$, analogously. 
Then the following hold:
\begin{itemize}
\item[(i)] $\Delta(M)\subset \mathcal{N}(D_{\rho})$ and $\Delta(A)\subset \mathcal{N}(D_{\mu})$. 
\item[(ii)] There exists a *-isomorphism from the {\rm{C}}$^*$-to-{\rm{W}}$^*$ ultrapower $(A,\mu)_{\omega}$ onto the Ocneanu ultrapower $M^{\omega}$ which maps $A'\cap (A,\mu)_{\omega}=\pi_{D_{\mu}}(\Delta(A))'\cap \mathcal{N}(D_{\mu})/D_{\mu}$ onto $M'\cap M^{\omega}$. 
%In particular, the isomorphism class of $A'\cap (A,\mu)_{\omega}$ does not depend on the choice of $\rho$. 
\end{itemize}
\end{prop}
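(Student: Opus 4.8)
The plan is to deduce part (i) from the list of equivalent conditions in Proposition \ref{prop: Obs (10)-1}, and to prove part (ii) by first identifying the C$^*$-to-W$^*$ ultrapower $(A,\mu)_\omega$ with the Ocneanu ultrapower $M^\omega$ via Proposition \ref{prop: connection between C* to W* and Ocneanu}, and then comparing the relative commutants of $\Delta(A)$ and $\Delta(M)$ inside $M^\omega$ by a density argument. For (i) it suffices, by Proposition \ref{prop: Obs (10)-1}, to verify condition (v) there for the pairs $(M,\rho)$ and $(A,\mu)$, i.e. that the support projection of the normal extension $\overline{\rho}$ in $M^{**}$, resp. of $\overline{\mu}$ in $A^{**}$, is central. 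For $(M,\rho)$ this is clear: $\overline{\rho}$ is carried by the central projection $z\in M^{**}$ with $M^{**}z\cong M$, and faithfulness of $\rho$ forces the support to be exactly $z$. For $(A,\mu)$ one uses that weak density of $A$ in $M$ yields a normal surjection $A^{**}\twoheadrightarrow M$ extending $A\hookrightarrow M$; then $\overline{\mu}$ is the composition of $\rho$ with this surjection, so its support is the central projection complementary to the (central) kernel. Thus condition (v) holds in both cases, and (i) follows.

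For (ii), the first step is to apply Proposition \ref{prop: connection between C* to W* and Ocneanu} to the constant sequences $A_n\equiv A\subset M_n\equiv M$, equipped with the normal faithful state $\rho$ on $M$ and $\mu=\rho|_A$. This yields a $*$-isomorphism $\Psi\colon (A,\mu)_\omega=\cN(D_\mu)/D_\mu\to\cN(D_\rho)/D_\rho$, induced by the inclusion $\ell_\infty(A)\hookrightarrow\ell_\infty(M)$; and by the discussion of $\S$\ref{sec: Ocneanu UP}, for the W$^*$-algebra $M$ with the normal faithful state $\rho$ the target $\cN(D_\rho)/D_\rho$ is exactly $M^\omega$. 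Writing $\delta\colon M\to M^\omega$ for the diagonal embedding $m\mapsto\pi_{D_\rho}(\Delta(m))$, one has $\Psi(\pi_{D_\mu}(\Delta(a)))=\delta(a)$ for $a\in A$. Since $\Psi$ is a $*$-isomorphism it maps $A'\cap(A,\mu)_\omega=\pi_{D_\mu}(\Delta(A))'\cap\cN(D_\mu)/D_\mu$ onto $\delta(A)'\cap M^\omega$, so the whole statement reduces to the identity $\delta(A)'\cap M^\omega=\delta(M)'\cap M^\omega=M'\cap M^\omega$.

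The inclusion $\delta(A)\subseteq\delta(M)$ gives one containment, and for the reverse I would argue as follows. The ultralimit functional $\rho^\omega(\pi_{D_\rho}((x_n)_n))=\lim_{n\to\omega}\rho(x_n)$ is a faithful normal state on $M^\omega$, and on bounded subsets of $M^\omega$ the metric $\|\cdot\|_{\rho^\omega}^\sharp$ induces the $\sigma$-strong-$*$ topology. Given $x\in M$ with $\|x\|\le1$, Kaplansky's density theorem provides a net $a_i\in A$ with $\|a_i\|\le1$ and $a_i\to x$ $\sigma$-strongly-$*$ in $M$, so $\|a_i-x\|_\rho^\sharp\to0$; since $\|\delta(y)\|_{\rho^\omega}^\sharp=\|y\|_\rho^\sharp$ for every $y\in M$ directly from the definition of $\rho^\omega$, this shows $\delta(a_i)\to\delta(x)$ $\sigma$-strongly-$*$ in $M^\omega$. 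Hence $\delta(M)$ lies in the $\sigma$-strong-$*$ closure of $\delta(A)$; and since for each fixed $z\in M^\omega$ the map $w\mapsto wz-zw$ is $\sigma$-strong-$*$ continuous, any element of $M^\omega$ commuting with $\delta(A)$ also commutes with that closure, in particular with $\delta(M)$. This gives $\delta(A)'\cap M^\omega\subseteq\delta(M)'\cap M^\omega$ and completes (ii).

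I expect the main obstacle to be precisely this last step: one has to be careful about which topology on $M^\omega$ makes $\delta(A)$ dense in $\delta(M)$, and hence to rely on the structural facts about the Ocneanu ultrapower from \cite{AndoHaagerup} (faithfulness and normality of $\rho^\omega$, and the fact that $\|\cdot\|_{\rho^\omega}^\sharp$ metrizes the $\sigma$-strong-$*$ topology on bounded sets). Once this is granted, the rest is bookkeeping with the identifications already established in $\S$\ref{sec: Ocneanu UP} and Propositions \ref{prop: connection between C* to W* and Ocneanu} and \ref{prop: Obs (10)-1}.
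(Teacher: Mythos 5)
Your proposal is correct and follows essentially the same route as the paper: part (i) via Proposition \ref{prop: Obs (10)-1} (the paper invokes the proof of (v)$\Rightarrow$(i) for $(M,\rho)$ and then deduces $\Delta(A)\subset\mathcal{N}(D_\rho)\cap\ell_\infty(A)=\mathcal{N}(D_\mu)$, whereas you verify condition (v) for $(A,\mu)$ directly via the central kernel of $A^{**}\twoheadrightarrow M$ --- an equivalent variant), and part (ii) via Proposition \ref{prop: connection between C* to W* and Ocneanu} followed by the identification $\delta(A)'\cap M^\omega=M'\cap M^\omega$. Your Kaplansky/$\sigma$-strong-$*$ continuity argument just spells out the step the paper compresses into ``$A'\cap M^\omega=(A'')'\cap M^\omega$''.
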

\begin{proof}
(i): 
By (the proof of) (v)$\Rightarrow $(i) in Proposition \ref{prop: Obs (10)-1}, $\Delta(M)\subset \mathcal{N}(D_{\rho})$ holds. Alternatively, one can use the fact that the norm $\|\cdot \|_{\rho}^{\sharp}$ defines a *-strong topology on the unit ball of $M$ and the separate *-strong continuity of the operator product $(x,y)\mapsto xy$.  
Then $\Delta(A)\subset \mathcal{N}(D_{\rho})\cap \ell_{\infty}(A)=\mathcal{N}(D_{\mu})$.\\
\smallskip
(ii)  By (i), $A'\cap (A,\mu)_{\omega}$ is well-defined. Also, it follows that by (i) and Proposition \ref{prop: Obs (9)-1}, there exists a C$^*$-subalgebra $B$ of $F(A)$ and a closed ideal $J\triangleleft B$ such that $B/J$ is *-isomorphic to $A'\cap (A,\mu)_{\omega}$.  
We have seen in Proposition \ref{prop: connection between C* to W* and Ocneanu}, that the embedding  $A\subset M$ defines
a natural embedding of $\ell_\infty(A)$
into $\ell_\infty(M)$ with the property that
this embedding defines  an isomorphism
from $(A,\mu)_\omega$ onto $M^\omega$. Moreover, it is straightforward to see that this isomorphism maps $a\in A\subset M$ to 
$\pi_{D_{\rho}} (\Delta(a)) \in \cN( D_{\rho})/D_{\rho}$. That is: 
$$\mathcal{N}(D_{\mu})/D_{\mu}\supseteq \pi_{D_{\mu}}(\Delta(A))\ni \pi_{D_{\mu}}(\Delta(a))\leftrightarrow \pi_{D_{\rho}}(\Delta(a))\in \pi_{D_{\rho}}(\Delta(M))\subseteq  M^{\omega}=\mathcal{N}(D_{\rho})/D_{\rho}\,.$$

The relative commutant $A'\cap (A,\mu)_\omega$
maps in this way into $M'\cap M^\omega$.
%%%%%%%%%%%%%%%%%%%%%%%%%%%%%%%%%%%%%%
Since the \cst-algebra 
$A$ is weakly dense in $M\subset M^{\omega}$,
it  follows that 
$$A'\cap (A,\rho|A)_\omega \cong  A'\cap M^{\omega}=(A'')'\cap M^{\omega}=M'\cap M^{\omega}\,.$$    
\end{proof}
We are now ready to prove Theorem \ref{thm: main2}. 
\begin{proof}[Proof of Theorem \ref{thm: main2}]
Choose a normal faithful state $\rho$ on $M$ and let $\mu:=\rho|_A$. Then by Lemma \ref{lem: Obs (12)-1} (i), $\Delta(A)\subset \mathcal{N}(D_{\mu})$. Thus by Proposition \ref{prop: Obs (9)-1} and Proposition \ref{lem: Obs (12)-1} (ii), we are done.  
\end{proof}
\if0
As a corollary, we obtain a result due to Kirchberg-R\o rdam \cite[Theorem 3.3]{KirRor.Crelle2013}.
\begin{cor}[Kirchberg-R\o rdam '13]\label{cor: ICC full factor and character}
Let $A$ be a separable unital ${\rm{C}}^*$-algbera with a faithful tracial state $\tau$ whose GNS representation generates non-McDuff type ${\rm{II}}_1$ factor. Then $F(A)$ has a character. 
\end{cor}
\begin{proof}
We regard $A$ (via GNS representation of $\tau$) as a weakly dense C$^*$-subalgebra of the non-McDuff II$_1$ factor $M=\pi_{\tau}(A)''$. Since $M$ is non-McDuff, $M'\cap M^{\omega}$ is either diffuse abelian or equal to $\mathbb{C}$. In both cases there exists a character $\mu \colon M'\cap M^{\omega}\to \mathbb{C}$. 
Since $\tau$ is a (faithful) tracial state, $L_{\tau}=L_{\tau}^*=D_{\tau}$ is a closed two-sided ideal of $\ell_{\infty}(A)$. Consequently, we have $\Delta(A)\subset \mathcal{N}(D_{\tau})=\ell_{\infty}(A)$. Therefore by (the proof of) Proposition \ref{lem: Obs (12)-1}, there exists a *-epimorphism 
\[\chi\colon F(A)=A'\cap A_{\omega}\to M'\cap M^{\omega}\stackrel{\mu}{\to}\mathbb{C}.\] 
\end{proof}
\fi 

\section*{Appendix: Ozawa's proof of the non-commutativity of $A'\cap A_{\omega}$}
In this appendix, we give Ozawa's proof, based on Kishimoto-Ozawa-Sakai Theorem \cite{KOS03} that if $A$ is a  unital, separable and non-type I C$^{\ast}$-algebra, then the central sequence algebra $F(A)=A'\cap A_{\omega}$ is non-commutative. We thank him for allowing us to include it. We also thank the referee for the suggestion of adding the proof.   
Recall that since $A$ is separable, the automorphism group ${\rm{Aut}}(A)$ of $A$ is a Polish group with respect to the topology of pointwise norm-convergence. We denote by ${\rm{Inn}}(A)$ (resp. $\overline{\rm{Inn}}(A)$) the subgroup of all inner  (resp. approximately inner) automorphisms of $A$. We say that an automorphism $\alpha\in {\rm{Aut}}(A)$ is {\it centrally trivial}, if for every central sequence $(a_n)_{n=1}^{\infty}$ in $A$, the sequence $(\alpha(a_n)-a_n)_{n=1}^{\infty}$ tends to 0 in norm as $n\to \infty$.  The group of all centrally trivial automorphisms of $A$ is denoted by ${\rm{Ct}}(A)$.  
By the proof of Kishimoto-Ozawa-Sakai Theorem \cite{KOS03}, the following Theorem holds. 
\begin{thm}[Kishimoto-Ozawa-Sakai]\label{thm: KOS transitivity}
Let $A$ be a unital separable non-type {\rm{I}} ${\rm{C}}^*$-algbera $A$. Let $N\in \mathbb{N}$ and $(\pi_n)_{n=1}^N$ be a sequence of mutually non-equivalent irreducible representations of $A$ with the same kernel.  Then for each permutation $\sigma\in S_N$, there exists $\alpha\in \overline{\rm{Inn}}(A)$ such that $\pi_{n}\circ \alpha$ is unitarily equivalent to $\pi_{\sigma(n)}$ for every $1\le n\le N$.  
\end{thm}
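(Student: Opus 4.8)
The plan is to extract this transitivity statement directly from the machinery used by Kishimoto, Ozawa and Sakai to prove that a separable non-type I \cst-algebra has automorphisms that move one pure state off a given unitary orbit. The essential input is their Lemma on approximate unitary equivalence in the presence of many inequivalent irreducibles with the same kernel: given mutually inequivalent irreducible representations $\pi_1,\dots,\pi_N$ of $A$ all having kernel $\mathfrak{q}$, the direct sum $\pi_1\oplus\cdots\oplus\pi_N$ and the permuted sum $\pi_{\sigma(1)}\oplus\cdots\oplus\pi_{\sigma(N)}$ have the same kernel $\mathfrak{q}$, hence the same image in the simple quotient $A/\mathfrak{q}$, and I want to realize the permutation by an approximately inner automorphism of $A$ itself rather than of the quotient.

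First I would reduce, by passing through a finite-dimensional approximation, to the following statement: for every finite $F\subset A$ and every $\varepsilon>0$ there is a unitary $u\in A$ (or in the unitization, but $A$ is unital) with $\|\pi_n(uau^*)-(\pi_{\sigma(n)}(a))\|<\varepsilon$ for all $a\in F$, after identifying the representation spaces of $\pi_n$ and $\pi_{\sigma(n)}$ appropriately. The point is that the $\pi_n$ being mutually inequivalent and sharing a kernel means that $(\pi_1\oplus\cdots\oplus\pi_N)(A)$ is weakly dense (indeed, by the double commutant and the fact that irreducibles with the same kernel generate $B(H_1)\oplus\cdots\oplus B(H_N)$ as their weak closure, since they are disjoint) in $B(H_1)\oplus\cdots\oplus B(H_N)$; this is precisely the setting where a Kaplansky-density / transitivity argument à la Glimm produces the required unitaries in $A$. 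Second, I would organize these finite-stage unitaries into a sequence $(u_k)$ with $u_{k+1}$ close to $u_k$ on an exhausting chain of finite sets and tolerances, so that $\mathrm{Ad}(u_k)$ converges in the point-norm topology to an automorphism $\alpha\in\overline{\mathrm{Inn}}(A)$; the limit then satisfies $\pi_n\circ\alpha\simeq\pi_{\sigma(n)}$ unitarily for each $n$, since unitary equivalence of irreducible representations is detected on finitely many matrix coefficients up to arbitrarily small error together with a standard Kadison-type perturbation argument turning an approximate intertwiner into an exact one.

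The main obstacle I expect is the first step: producing the finite-stage unitaries inside $A$ with uniform control across all $N$ blocks simultaneously, rather than block by block. One must arrange a single $u\in A$ that approximately implements the permutation on $\pi_1\oplus\cdots\oplus\pi_N$; the off-diagonal blocks of the target permutation unitary have to be approximated by elements of $A$, and this is exactly where one needs that the representations are mutually inequivalent with a common kernel (so that $A$ surjects densely onto the full matrix-amplified von Neumann algebra $\bigoplus_n B(H_n)$, or onto a large enough finite corner of it). Assembling this carefully, with the simplicity of $A/\mathfrak{q}$ replaced by the weaker hypothesis of a common kernel, is the technical heart; it is contained in the proof of \cite{KOS03} and I would cite it in that form. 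Everything after that — the inductive construction of the convergent sequence of inner automorphisms and the passage from approximate to exact unitary equivalence of the irreducibles — is routine.
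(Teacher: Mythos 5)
First, a point of comparison: the paper does not actually prove this theorem --- it states that it ``holds by the proof of'' \cite{KOS03} and uses it as a black box, and your proposal likewise defers the technical heart to \cite{KOS03}. So far so consistent. But the surrounding argument you supply contains a step that would fail as described. Your finite-stage goal is to find a single unitary $u\in A$ with $\|\pi_n(uau^*)-V_n\pi_{\sigma(n)}(a)V_n^*\|<\varepsilon$ for all $a\in F$ and all $n$, and you propose to get it from weak density of $(\pi_1\oplus\cdots\oplus\pi_N)(A)$ in $B(H_1)\oplus\cdots\oplus B(H_N)$ via Kaplansky density / Kadison--Glimm transitivity. That machinery only controls the action of $\pi(u)$ on finitely many vectors (or a finite-dimensional subspace); it cannot produce the \emph{operator-norm} estimate $\|\pi_n(uau^*)-V_n\pi_{\sigma(n)}(a)V_n^*\|<\varepsilon$, which is a statement about closeness of two operators on all of the infinite-dimensional space $H_n$. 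Note also that no inner automorphism moves the class of $\pi_n$ at all (since $\pi_n\circ\mathrm{Ad}(u)=\mathrm{Ad}(\pi_n(u))\circ\pi_n$), so the whole content is in making the errors small in a topology that is \emph{not} reachable by transitivity arguments; this is exactly where your sketch is circular.

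The actual mechanism in \cite{KOS03} is different in kind: one works at the level of pure states $\varphi_n$ associated with the $\pi_n$, controlling $\|\varphi_n\circ\mathrm{Ad}(u)-\varphi_{\sigma(n)}\|$ in the norm of $A^*$ (a much weaker finite-stage requirement), using excision of pure states and the abundance of mutually inequivalent irreducible representations guaranteed by the non-type I hypothesis; the errors are made summable and one concludes $\varphi_n\circ\alpha$ is norm-close to (indeed equal to) $\varphi_{\sigma(n)}$, whence $\pi_n\circ\alpha\simeq\pi_{\sigma(n)}$ because pure states at distance $<2$ have unitarily equivalent GNS representations. Two further points you gloss over: (a) point-norm convergence of $\mathrm{Ad}(u_k)$ alone yields an injective endomorphism, not an automorphism --- one needs a two-sided (Elliott-type) intertwining so that the inverses converge as well; and (b) the ``standard Kadison-type perturbation argument turning an approximate intertwiner into an exact one'' is not available in the form you invoke, for the same reason as above. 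As written, then, the proposal identifies the right obstacle but proposes a tool (transitivity from weak density) that cannot overcome it; the correct proof really does require the state-space/excision technology of \cite{KOS03}.
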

\begin{rem}
Theorem \ref{thm: KOS transitivity} holds for $N=\infty$ and arbitrary permutation $\sigma\in S_{\infty}$, but the proof will be more difficult. For our purpose $N<\infty$ version suffices. 
The $N=\infty$ version of the theorem is used in \cite{AkemannWeaver04}. 
\end{rem}

The next Proposition is well-known or a folklore. The proof is nothing more than a copy of Connes' argument in \cite[Theorem2.2.1]{Connes75} on the characterization of McDuff factors of type II$_1$ by centrally trivial automorphisms. We nevertheless include the proof for completeness. 
\begin{prop}\label{prop: commutativity of AInn/Inn}
Let $A$ be a unital separable {\rm{C}}$^*$-algebra such that $A'\cap A_{\omega}$ is commutative.
Then the group $\overline{\rm{Inn}}(A)/{\rm{Inn}}(A)$ is commutative. 
\end{prop}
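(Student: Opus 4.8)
The plan is to adapt Connes' characterization of McDuff II$_1$ factors via centrally trivial automorphisms to the C$^*$-setting. The statement to prove is: if $A$ is unital separable with $A'\cap A_\omega$ commutative, then $\overline{\rm{Inn}}(A)/{\rm{Inn}}(A)$ is abelian.

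First I would recall the standard dictionary between automorphisms of $A$ and automorphisms of the central sequence algebra: an approximately inner automorphism $\alpha\in\overline{\rm{Inn}}(A)$, being a pointwise-norm limit ${\rm Ad}(u_n)$ of inner automorphisms, induces an automorphism $\tilde\alpha$ of $A'\cap A_\omega$; concretely, if $\alpha=\lim_n {\rm Ad}(u_n)$ with $u_n\in\mathcal U(A)$, then $u:=\pi_\omega((u_1,u_2,\dots))$ is a unitary in the idealizer of $A$ inside $A_\omega$ and $\tilde\alpha={\rm Ad}(u)$ on $A'\cap A_\omega$. The first key point is that this $\tilde\alpha$ depends only on $\alpha$ (not on the choice of approximating sequence $(u_n)$): two choices $(u_n),(v_n)$ for the \emph{same} $\alpha$ differ so that $u_n^*v_n$ asymptotically commutes with every element of $A$ — i.e. $w:=\pi_\omega((u_n^*v_n))\in A'\cap A_\omega$ — whence ${\rm Ad}(u)$ and ${\rm Ad}(v)$ differ by ${\rm Ad}(w)$ with $w$ central; but since $A'\cap A_\omega$ is \emph{commutative}, ${\rm Ad}(w)$ acts trivially on $A'\cap A_\omega$. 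This gives a well-defined group homomorphism $\overline{\rm{Inn}}(A)\to {\rm{Aut}}(A'\cap A_\omega)$, $\alpha\mapsto\tilde\alpha$.

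Next I would identify the kernel of this homomorphism. If $\alpha\in{\rm{Inn}}(A)$, say $\alpha={\rm Ad}(u)$ with $u\in\mathcal U(A)$, then the approximating sequence can be taken constant $u_n\equiv u$, so the induced unitary in $A_\omega$ is $\Delta(u)$, which lies in $A$; hence $\tilde\alpha={\rm Ad}(\Delta(u))$ acts on $A'\cap A_\omega$ as conjugation by an element of $A$, and since $A'\cap A_\omega$ commutes with $A$, this is the identity. So ${\rm{Inn}}(A)\subseteq\ker$, and $\alpha\mapsto\tilde\alpha$ descends to a homomorphism $\overline{\rm{Inn}}(A)/{\rm{Inn}}(A)\to{\rm{Aut}}(A'\cap A_\omega)$. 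The heart of the argument is showing this descended map is \emph{injective}: if $\tilde\alpha={\rm id}$ on $A'\cap A_\omega$, one must conclude $\alpha\in{\rm{Inn}}(A)$. Here is where I expect the main obstacle, and where one leans on $A'\cap A_\omega$ being commutative together with a Connes-type reindexing/2-variable ultrapower argument: writing $\alpha=\lim_n{\rm Ad}(u_n)$, the hypothesis says $u$ commutes with $A'\cap A_\omega$, i.e. $u$ lies in the relative commutant of $A'\cap A_\omega$ inside $A_\omega$; one then wants to correct $u$ by a central unitary to land in $A$ (equivalently, to push $u$ into $\Delta(A)+c_\omega$), which by a standard diagonal/reindexing argument over $\omega$ produces a genuine unitary $v\in\mathcal U(A)$ with $\alpha={\rm Ad}(v)$. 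The commutativity of $A'\cap A_\omega$ is exactly what collapses the potential obstruction (a nontrivial central unitary worth of ambiguity) to nothing.

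Once injectivity is established, commutativity of $\overline{\rm{Inn}}(A)/{\rm{Inn}}(A)$ follows: the group embeds via $\alpha\mapsto\tilde\alpha$ into the automorphism group, but in fact the image consists of \emph{inner} automorphisms ${\rm Ad}(u)$ of $A'\cap A_\omega$ with $u$ in the idealizer, and since $A'\cap A_\omega$ is abelian any such ${\rm Ad}(u)$ restricted to $A'\cap A_\omega$ commutes with any other — more precisely, for $\alpha,\beta\in\overline{\rm{Inn}}(A)$ with induced central-idealizer unitaries $u,v$, the commutator $\widetilde{[\alpha,\beta]}={\rm Ad}(uvu^*v^*)$ acts on the commutative algebra $A'\cap A_\omega$, and $uvu^*v^*$ can be shown to lie in (the idealizer part whose action on the abelian $A'\cap A_\omega$ is) trivial; by injectivity $[\alpha,\beta]\in{\rm{Inn}}(A)$. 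I would present the computation showing $\widetilde{\alpha\beta}=\widetilde{\beta\alpha}$ directly from the fact that conjugations by the relevant unitaries on a \emph{commutative} C$^*$-algebra commute, then invoke injectivity to transfer this back to $\overline{\rm{Inn}}(A)/{\rm{Inn}}(A)$.
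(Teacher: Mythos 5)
Your reduction to a homomorphism $\overline{\rm{Inn}}(A)\to{\rm{Aut}}(A'\cap A_{\omega})$, $\alpha\mapsto\tilde\alpha={\rm{Ad}}(\pi_\omega((u_n)))|_{A'\cap A_\omega}$, is fine as far as it goes: well-definedness does use the commutativity of $A'\cap A_\omega$ exactly as you say, and ${\rm{Inn}}(A)$ lies in the kernel. But both remaining steps have genuine gaps. First, injectivity of the descended map is not a ``standard reindexing argument'': you would need that a unitary $u\in A_\omega$ which commutes with $A'\cap A_\omega$ and implements $\alpha$ on $A$ can be corrected to a constant unitary $\Delta(v)$, and nothing in the hypothesis supplies this -- the relative commutant of $A'\cap A_\omega$ in $A_\omega$ is in general far larger than $A$ plus central unitaries. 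Second, and independently fatal: even granting injectivity, the image need not be commutative. Conjugation automorphisms of a \emph{commutative} algebra $C$ by unitaries of an ambient algebra that normalize $C$ do not commute with one another in general (the diagonal subalgebra of $M_3(\C)$ has ``Weyl group'' $S_3$). Your identity $\tilde\alpha\tilde\beta\tilde\alpha^{-1}\tilde\beta^{-1}={\rm{Ad}}(uvu^*v^*)|_{A'\cap A_\omega}$ is the trivial automorphism only if $uvu^*v^*$ commutes with $A'\cap A_\omega$, which is essentially what you are trying to prove; the argument is circular at exactly the point where you write ``can be shown to lie in \dots trivial.''

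The paper takes a different, workable route that never embeds $\overline{\rm{Inn}}(A)/{\rm{Inn}}(A)$ into ${\rm{Aut}}(A'\cap A_\omega)$. It first shows that commutativity of $A'\cap A_\omega$ forces $\overline{\rm{Inn}}(A)\subseteq{\rm{Ct}}(A)$, the centrally trivial automorphisms: an ultraproduct argument converts commutativity of $A'\cap A_\omega$ into the finitary statement that there exist $\delta>0$ and finitely many $b_j$ such that any two contractions $\delta$-commuting with all $b_j$ must $\varepsilon$-commute, and this is then used to show $\|\theta(x)-x\|$ is small for any $\theta\in\overline{\rm{Inn}}(A)$ and any $x$ almost commuting with finitely many elements. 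Separately, Connes' telescoping argument (with $v_n=u_{n+1}u_n^*$ and the Cauchy sequence $u_n^*\theta(u_n)$) shows that the images of $\overline{\rm{Inn}}(A)$ and ${\rm{Ct}}(A)$ in ${\rm{Out}}(A)$ always commute. Combining the two gives the proposition. I recommend abandoning the embedding strategy and working with ${\rm{Ct}}(A)$ instead.
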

\begin{lemma}\label{lem: app inn cnt}
Let $A$ be a unital separable {\rm{C}}$^*$-algebra. Let $\varepsilon\colon {\rm{Aut}}(A)\to {\rm{Out}}(A)={\rm{Aut}}(A)/{\rm{Inn}}(A)$ be the canonical quotient. Then $\varepsilon(\overline{\rm{Inn}}(A))$ and $\varepsilon ({\rm{Ct}}(A))$ commute. 
\end{lemma}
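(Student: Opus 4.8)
The statement to prove is Lemma~\ref{lem: app inn cnt}: for a unital separable $A$, the images $\varepsilon(\overline{\rm{Inn}}(A))$ and $\varepsilon({\rm{Ct}}(A))$ commute inside ${\rm{Out}}(A)$. This is the standard commutator trick of Connes, transplanted from factors to $\cst$-algebras. The plan is to take $\alpha \in \overline{\rm{Inn}}(A)$ and $\beta \in {\rm{Ct}}(A)$ and show that the commutator $[\alpha,\beta] := \alpha\beta\alpha^{-1}\beta^{-1}$ is inner; equivalently $\varepsilon(\alpha)\varepsilon(\beta) = \varepsilon(\beta)\varepsilon(\alpha)$.

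First I would exploit the approximate innerness of $\alpha$: choose a sequence of unitaries $u_n \in A$ with $\|\alpha(x) - u_n x u_n^*\| \to 0$ for every $x$ in a fixed dense (or even all of) $A$, using separability to pass to a single sequence working for all $x$. The key observation is that $(u_n)_n$, when viewed modulo the relevant null sequences, lands in the unitary group of $A' \cap A_\omega$: indeed for $b \in A$ one has $\|u_n b u_n^* - b\| \le \|u_n b u_n^* - \alpha(b)\| + \|\alpha(b) - b\|$ — wait, that is not small in general, so more care is needed. The correct route: $\alpha$ being approximately inner does not make $(u_n)$ central, but it makes $(u_n)$ an \emph{approximately central unitary path implementing $\alpha$}. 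Next I would apply $\beta \in {\rm{Ct}}(A)$ to the $u_n$. Since $\beta$ is centrally trivial, $\beta$ acts trivially on $A' \cap A_\omega$; more precisely, for any central sequence $(a_n)$ one has $\beta(a_n) - a_n \to 0$. The main computation is then: $[\alpha,\beta]$ is implemented, in the limit along $\omega$, by the sequence $v_n := u_n \beta(u_n)^* \in A$, and one checks that $(v_n)$ is a central sequence (because $u_n$ is approximately central for conjugation after correcting by $\alpha$, and $\beta$ preserves this) whose image in $A'\cap A_\omega$ is a \emph{unitary} implementing the automorphism of $A_\omega$ induced by $[\alpha,\beta]$ restricted appropriately. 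Finally, one shows $(v_n)$ is trivial in $A'\cap A_\omega / (\text{scalars})$ — here is where commutativity of $A'\cap A_\omega$ (for Proposition~\ref{prop: commutativity of AInn/Inn}) or centrality enters — forcing $[\alpha,\beta]$ to fix enough of $A$ to be inner, via a Kishimoto-type argument or directly via the structure of the central sequence algebra.

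The hard part will be converting the $\omega$-limit / central-sequence data back into a genuine inner automorphism of $A$, rather than merely an automorphism of $A_\omega$ trivial on $A$; this is exactly where one invokes that $A' \cap A_\omega$ has no surprising unitaries modulo $\Ann$ — in the setting of this lemma one uses the commutation hypothesis of the ambient Proposition, or, in the bare form stated, that a centrally trivial automorphism composed against an approximately inner one has implementing unitaries that can be chosen \emph{exactly} central up to $\omega$, and a central sequence of unitaries close to the identity can be lifted to inner automorphisms converging to $[\alpha,\beta]$ in ${\rm{Aut}}(A)$. I would close by noting $\overline{\rm{Inn}}(A)$ is a normal (closed) subgroup of ${\rm{Aut}}(A)$ containing ${\rm{Inn}}(A)$, so this limiting argument legitimately places $[\alpha,\beta]$ in $\overline{\rm{Inn}}(A)$, and the central-triviality refinement upgrades it to ${\rm{Inn}}(A)$. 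This mirrors line-by-line Connes' proof of \cite[Theorem 2.2.1]{Connes75}, replacing the II$_1$ trace-norm topology by the norm topology on the unit ball and the McDuff dichotomy by the commutativity hypothesis; the only genuinely $\cst$-algebraic input is the separability used to diagonalize the approximating unitaries into one sequence.
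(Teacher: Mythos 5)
Your plan diverges from the paper's proof in a way that leaves a genuine gap, and two of your intermediate claims are not correct as stated. First, the sequence $v_n:=u_n\beta(u_n)^*$ is \emph{not} a central sequence: one has ${\rm{Ad}}(u_n\beta(u_n)^*)={\rm{Ad}}(u_n)\circ\bigl({\rm{Ad}}(\beta(u_n))\bigr)^{-1}\to \alpha\circ\beta\circ\alpha^{-1}\circ\beta^{-1}$, which is precisely the commutator you are trying to control, so asserting that $(v_n)$ is central (i.e.\ that ${\rm{Ad}}(v_n)\to{\rm{id}}_A$) amounts to assuming the conclusion. Second, the lemma does not assume that $A'\cap A_{\omega}$ is commutative; that hypothesis belongs to Proposition \ref{prop: commutativity of AInn/Inn}, which \emph{uses} the lemma, so importing it here is circular in the context of the paper and in any case proves a weaker statement than the one asserted. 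Third, you correctly flag as ``the hard part'' the problem of descending from a unitary of $A_{\omega}$ to a genuine inner automorphism of $A$, but you do not solve it --- and in the paper's argument it never arises, because the ultrapower is not used at all.

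The two ideas your sketch is missing are the following. (a) Central triviality of $\theta$ can be upgraded, by a contradiction argument with a countable neighbourhood basis of ${\rm{id}}_A$ in the Polish group ${\rm{Aut}}(A)$, to a \emph{uniform quantitative} statement: there are decreasing neighbourhoods $\mathcal{V}_n$ of ${\rm{id}}_A$ such that ${\rm{Ad}}(u)\in\mathcal{V}_n$ forces $\|\theta(u)-u\|<2^{-n}$ for every unitary $u$. (b) This is applied not to the $u_n$ themselves (which are not central) but to the \emph{telescoping differences} $v_n=u_{n+1}u_n^*$: choosing ${\rm{Ad}}(u_n)$ in shrinking neighbourhoods $\mathcal{W}_n$ of $\alpha$ with $\mathcal{W}_n\mathcal{W}_n^{-1}\subset\mathcal{V}_n$ guarantees ${\rm{Ad}}(v_n)\in\mathcal{V}_n$, hence $\|\theta(v_n)-v_n\|<2^{-n}$. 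These summable bounds make $u_n^*\theta(u_n)$ norm-Cauchy in $A$; its limit $w$ is a unitary of $A$ with $\alpha^{-1}\circ\theta\circ\alpha\circ\theta^{-1}={\rm{Ad}}(w)$, giving innerness of the commutator with no lifting problem. Without (a) and (b) your argument has no mechanism for producing a norm-convergent sequence of unitaries in $A$ --- at best it would show the commutator is \emph{approximately} inner, which does not yield commutation in ${\rm{Out}}(A)$. You cite the right source (Connes' Theorem 2.2.1), but the sketch omits exactly the mechanism that makes that proof work.
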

\begin{proof}
Let $\theta\in {\rm{Ct}}(A)$ and $\alpha\in \overline{\rm{Inn}}(A)$. Then since $\theta$ is centrally trivial, for every $\varepsilon>0$, there exists an open neighborhood $\mathcal{V}$ of ${\rm{id}}_A$ in ${\rm{Aut}}(A)$ such that for every $u\in \mathcal{U}(A)$ we have the implication
\begin{equation}
{\rm{Ad}}(u)\in \mathcal{V}\Rightarrow \|\theta(u)-u\|<\varepsilon.
\end{equation}
Indeed, assume that this is not the case. Fix a neighborhood basis $\{\mathcal{U}_n\}_{n=1}^{\infty}$ of ${\rm{id}}_A$. Then there exists $\varepsilon>0$ such that for every $n\in \mathbb{N}$, there exists $u_n\in \mathcal{U}(A)$ with ${\rm{Ad}}(u_n)\in \mathcal{U}_n$ and $\|\theta(u_n)-u_n\|\ge \varepsilon$. This shows that $(u_n)_{n=1}^{\infty}$ is a central sequence with $\|\theta(u_n)-u_n\|\not\to 0$, a contradiction. 
Therefore we may find a decreasing sequence $(\mathcal{V}_n)_{n=1}^{\infty}$ of neighborhoods of ${\rm{id}}_A$ with $\bigcap_{n=1}^{\infty}\mathcal{V}_n=\{\text{id}_A\}$, such that for every $n\in \mathbb{N}$ and $u\in \mathcal{U}(A)$, we have 
\begin{equation}
{\rm{Ad}}(u)\in \mathcal{V}_n\Rightarrow \|\theta(u)-u\|<\frac{1}{2^n}.\label{eq: V_n condition}
\end{equation}
Choose a decreasing sequence $(\mathcal{W}_n)_{n=1}^{\infty}$ of neighborhoods of $\alpha$ in ${\rm{Aut}}(A)$ such that 
$\mathcal{W}_n\mathcal{W}_n^{-1}\subset \mathcal{V}_n\ (n\in \mathbb{N})$. Since $\alpha$ is approximately inner, for every $n\in \mathbb{N}$, there exists $u_n\in \mathcal{U}(A)$ such that ${\rm{Ad}}(u_n)\in \mathcal{W}_n$ holds. Then $\displaystyle \alpha=\lim_{n\to \infty}{\rm{Ad}}(u_n)$ and $\displaystyle \theta\circ \alpha \circ \theta^{-1}=\lim_{n\to \infty}{\rm{Ad}}(\theta(u_n))$. Set $v_n:=u_{n+1}u_n^*\in \mathcal{U}(A)\ (n\in \mathbb{N})$. Then for each $n\in \mathbb{N}$, we have 
$${\rm{Ad}}(v_n)\in \mathcal{W}_{n+1}\mathcal{W}_n^{-1}\subset \mathcal{W}_n\mathcal{W}_n^{-1}\subset \mathcal{V}_n,$$
so that $\|\theta(v_n)-v_n\|<2^{-n}$ by (\ref{eq: V_n condition}). Therefore for every $n\in \mathbb{N}$, it holds that 
\eqa{
\|u_{n+1}^*\theta(u_{n+1})-u_n^*\theta(u_n)\|&=\|\theta(u_{n+1})-u_{n+1}u_n^*\theta(u_n)\|\\
&=\|(\theta(v_n)-v_n)\theta(u_n)\|\\
&=\|\theta(v_n)-v_n\|<2^{-n}.
} 
This shows that $(u_n^*\theta(u_n))_{n=1}^{\infty}$ is a Cauchy sequence, so that $\displaystyle w=\lim_{n\to \infty}u_n^*\theta(u_n)\in \mathcal{U}(A)$ exists, and 
\[\alpha^{-1}\circ \theta \circ \alpha \circ \theta^{-1}={\rm{Ad}}(w)\in {\rm{Inn}}(A).\]
This shows that $\varepsilon(\theta)$ and $\varepsilon(\alpha)$ commute. 
\end{proof}
\begin{proof}[Proof of Proposition \ref{prop: commutativity of AInn/Inn}]
Assume that $A'\cap A_{\omega}$ is abelian. Let $\theta\in \overline{\rm{Inn}}(A)$. We show that $\theta$ is centrally trivial. Fix a dense subset $\{a_n\}_{n=1}^{\infty}$ of the closed unit ball of $A$. We first show:\\ \\
\textbf{Claim.} For every $\varepsilon>0$, there exist $\delta>0, n\in \mathbb{N}$ and $b_1,\dots, b_n\in A$ with $\|b_j\|\le 1\ (1\le j\le n)$ such that  if $x,y\in A$, then 
\begin{equation}
\|x\|\le 1,\ \|y\|\le 1,\ \|[x,b_j]\|<\delta,\ \|[y,b_j]\|<\delta\ \ (1\le j\le n)\Rightarrow \|[x,y]\|<\varepsilon. 
\end{equation} 
Indeed, assume that this is not the case. Let $\{b_j\}_{j=1}^{\infty}$ be a countable dense subset of the unit ball of $A$. Then there exists $\varepsilon>0$ such that for every $n\in \mathbb{N}$, there exist $x_n,y_n\in A$ with $\|x_n\|\le 1,\ \|y_n\|\le 1$ such that 
\[\|[x_n,b_j]\|<\frac{1}{n},\ \|[y_n,b_j]\|<\frac{1}{n},\ (1\le j\le n),\ \ \ \text{and\ \ } \|[x_n,y_n]\|\ge \varepsilon.\]
Then $x:=\pi_{\omega}((x_n)_{n=1}^{\infty}), y:=\pi_{\omega}((y_n)_{n=1}^{\infty})\in A'\cap A_{\omega}$ do not commute, a contradiction. Let $\varepsilon>0$ and choose $\delta>0$ and $b_1,\dots, b_n$ as in the Claim. Define an open neighborhood $\mathcal{V}$ of ${\rm{id}}_A$ in ${\rm{Aut}}(A)$ by 
\[\mathcal{V}:=\{\alpha \in {\rm{Aut}}(A); \|\alpha(b_j)-b_j\|<\delta\ \  \ (1\le j\le n)\}.\]
We observe that if $x\in A$ satisfies $\|x\|\le 1$ and $\|[x,b_j]\|<\delta\ (1\le j\le n)$, then for every 
$\alpha={\rm{Ad}}(u)\in \mathcal{V}\cap {\rm{Inn}}(A)$, we have $\|[u,b_j]\|=\|\alpha(b_j)-b_j\|<\delta$, so that by Claim, 
\[\|[x,u]\|=\|\alpha(x)-x\|<\varepsilon.\]
Since $\mathcal{V}$ is open, $\overline{\rm{Inn}}(A)\cap \mathcal{V}\subset \overline{{\rm{Inn}}(A)\cap \mathcal{V}}$ holds, so that we also have 
\begin{equation}
\|x\|\le 1,\ \  \|[x,b_j]\|<\delta\ (1\le j\le n)\Rightarrow \|\alpha(x)-x\|\le \varepsilon\ \ \ (\alpha \in \overline{{\rm{Inn}}}(A)\cap \mathcal{V}).\label{eq: [x,bj]}
\end{equation} 
Since $\theta\in \overline{{\rm{Inn}}}(A)$, we may find $w\in \mathcal{U}(A)$ such that $\alpha=\theta \circ {\rm{Ad}}(w)^{-1}\in \mathcal{V}\cap \overline{{\rm{Inn}}}(A)$ holds. Since $\{a_n\}_{n=1}^{\infty}$ is dense in the unit ball of $A$, we may find $0<\delta'\le \delta$ and $b_{n+1},\dots, b_{m}$ with $\|b_j\|\le 1\ (n+1\le j\le m)$ such  that for $y\in A$, 
\[\|y\|\le 1,\ \ \|[y,b_j]\|<\delta'\ \ (n+1\le j\le m)\Rightarrow \|[w,y]\|<\varepsilon.\]
Then if $x\in A$ satisfies $\|x\|\le 1$ and $\|[x,b_j]\|<\delta'\ (1\le j\le m)$, we have 
\eqa{
\|\theta(x)-x\|&=\|\alpha(wxw^*)-x\|\\
&\le \|\alpha (wxw^*-x)\|+\|\alpha(x)-x\|\\
&=\|[x,w]\|+\|\alpha(x)-x\|<2\varepsilon.
}
Since $\varepsilon>0$ is arbitrary, this shows that $\theta\in {\rm{Ct}}(A)$, whence $\overline{\rm{Inn}}(A)\subset {\rm{Ct}}(A)$ holds. Therefore $\overline{\rm{Inn}}(A)/{\rm{Inn}}(A)$ is commutative by Lemma \ref{lem: app inn cnt}.  
\end{proof}
\begin{proof}[Proof of the non-commutativity of $A'\cap A_{\omega}$] 
Fix an integer $N\ge 3$. 
By Glimm's Theorem \cite{Glimm61}, there exists a sequence $(\pi_n)_{n=1}^N$ of mutually non-equivalent irreducible representations of $A$ with the same kernel (in fact, there exist uncountably many such representations). Let $\sigma\in S_N$. Then by Theorem \ref{thm: KOS transitivity}, there exists $\alpha_{\sigma}\in \overline{\rm{Inn}}(A)$ such that $\pi_n\circ \alpha_{\sigma}$ is unitarily equivalent to $\pi_{\sigma^{-1}(n)}$ for all $1\le n\le N$. Let $\widehat{A}$ be the space of all unitary equivalence classes of irreducible representations of $A$. Then since inner automorphisms preserve the unitary equivalence classes of irreducible representations, we have a surjective homomorphism $\beta$
from a subgroup $G_N$ of the quotient group $\overline{\rm{Inn}}(A)/{\rm{Inn}}(A)$ onto $S_N$ given by 
$G_N\ni [\alpha_{\sigma}]\mapsto \sigma\in S_{N}$ with $[\pi_n\circ \alpha_{\sigma}]=[\pi_{\sigma^{-1}(n)}]\ (1\le n\le N)$, where $[\pi]$ is the class of $\pi$ in $\widehat{A}$. This shows that  $\overline{\rm{Inn}}(A)/{\rm{Inn}}(A)$ is non-commutative, whence $A'\cap A_{\omega}$ is non-commutative by Proposition \ref{prop: commutativity of AInn/Inn}. 
\end{proof}
\section*{Acknowledgements}
The authors would like to thank Professor Mikael R\o rdam and Dr. James Gabe for useful comments and for careful reading on the first (resp. the second) draft of the paper, Professor Narutaka Ozawa for informing us of his proof of the noncommutativity of $A'\cap A_{\omega}$ using \cite{KOS03}. We also thank the anonymous referee for various useful suggestions which improved the presentation of the paper. The first named author was supported by the Danish National Research Foundation through the Centre for Symmetry and Deformation (DNRF92) while he was working as a postdoc researcher on this article at University of Copenhagen. 
He is supported by JSPS KAKENHI 16K17608 for the last part of the project. 
\providecommand{\bysame}{\leavevmode\hbox
to3em{\hrulefill}\thinspace}
%%%%%%%%%%%%%%%%%%%%%%%%%%%%%%%

%
%\address{Institut f{\"u}r Mathematik,
%Humboldt Universit{\"a}t zu Berlin, Unter den Linden 6,\\
%D--10099 Berlin, Germany}\\
%\email{kirchbrg@mathematik.hu-berlin.de}
\if0
\noindent Hiroshi Ando\\
\address{Department of Mathematical Sciences,\\
University of Copenhagen\\
Universitetsparken 5,\\
2100 Copenhagen \O, Denmark}\\ \\
(new contact address)\\
Department of Mathematics and Informatics,\\
Chiba University\\
1-33 Yayoi-cho, Inage, Chiba,\\
263-8522 Japan
hiroando@math.s.chiba-u.ac.jp
\\ \\
Eberhard Kirchberg\\
\address{Institut f{\"u}r Mathematik,
Humboldt Universit{\"a}t zu Berlin, 
Unter den Linden 6\\
D--10099 Berlin, Germany}\\
\email{kirchbrg@mathematik.hu-berlin.de}
\fi
\end{document}